\documentclass[twoside,11pt,reqno]{amsart}

\usepackage{amsmath}
\usepackage{amssymb}
\usepackage{amscd}
\usepackage{mathrsfs}
\usepackage{epic}
\usepackage{latexsym}
\usepackage{tikz}
\usepackage{mathrsfs}
\usepackage{cite}
\usepackage{hyperref}
\usepackage{mathtools} 
\usepackage{pb-diagram}
\usepackage{tikz-cd}
\usepackage[all]{xy}
\usepackage{xcolor}
\usepackage{enumitem}
\usepackage{xargs} 
\usepackage{xparse} 
\usetikzlibrary{cd} 
\usepackage{tensor} 
\usepackage{xpatch} 
\allowdisplaybreaks
\numberwithin{equation}{section}
\newtheorem{Proposition}[equation]{Proposition}
\newtheorem{Lemma}[equation]{Lemma}
\newtheorem{Theorem}[equation]{Theorem}
\newtheorem{Corollary}[equation]{Corollary}
\theoremstyle{definition} % makes all of the theorem environments which follow appear in \rm

\let\ss\scriptstyle 
\let\sss\scriptscriptstyle 
\newcommand{\mtiny}[1]{\mbox{\tiny$#1$}}
\newcommand{\mtinier}[1]{\mbox{\tinier$#1$}}
\newcommand{\mminiscule}[1]{ \mbox{\miniscule$#1$}}
\usepackage{lmodern}
\makeatletter
\ifcase \@ptsize \relax% 10pt
\newcommand{\miniscule}{\@setfontsize\miniscule{4}{5}}% \tiny: 5/6
\or% 11pt
\newcommand{\miniscule}{\@setfontsize\miniscule{4}{5}}% \tiny: 6/7
\or% 12pt
\newcommand{\miniscule}{\@setfontsize\miniscule{5}{6}}% \tiny: 6/7
\fi
\makeatother
\makeatletter
\ifcase \@ptsize \relax% 10pt
\newcommand{\tinier}{\@setfontsize\tinier{4}{5}}% \tiny: 5/6
\or% 11pt
\newcommand{\tinier}{\@setfontsize\tinier{5}{6}}% \tiny: 6/7
\or% 12pt
\newcommand{\tinier}{\@setfontsize\tinier{5}{6}}% \tiny: 6/7
\fi
\makeatother
\NewDocumentCommand\smallsup{mom}{\IfNoValueTF{#2}{{{{#1}^{{\sss{#3}}}}}\vphantom{#1}}{{{{#1}_{#2}^{{\sss{#3}}}}}\vphantom{#1}}}
\NewDocumentCommand\tinysup{mom}{\IfNoValueTF{#2}{{{{#1}^{{\mtiny{#3}}}}}\vphantom{#1}}{{{{#1}_{#2}^{{\mtiny{#3}}}}}\vphantom{#1}}}
\NewDocumentCommand\tiniersup{mom}{\IfNoValueTF{#2}{{{{#1}^{{\mtinier{#3}}}}}\vphantom{#1}}{{{{#1}_{#2}^{{\mtinier{#3}}}}}\vphantom{#1}}}
\NewDocumentCommand\tiniestsup{mom}{\IfNoValueTF{#2}{{{{#1}^{{\mminiscule{#3}}}}}\vphantom{#1}}{{{{#1}_{#2}^{{\mminiscule{#3}}}}}\vphantom{#1}}}
\NewDocumentCommand\lowsmallsup{mom}{\IfNoValueTF{#2}{\def\arraystretch{0.2}#1\begin{array}{@{}l@{}}{\sss #3}\\ {\phantom{\sss #1}}\end{array}\vphantom{#1}}{\def\arraystretch{0.1}#1\begin{array}{@{}l@{}}{\sss #3}\\ {\vphantom{#1}^{\ss #2}}\end{array}\vphantom{#1}}}
\NewDocumentCommand\lowsmallestsup{mom}{\IfNoValueTF{#2}{\def\arraystretch{0.1}#1\begin{array}{@{}l@{}}{\mminiscule{#3}}\\ {\phantom{\mminiscule{#1}}}\end{array}\vphantom{#1}}{\def\arraystretch{0.1}#1\begin{array}{@{}l@{}}{\mminiscule{#3}}\\ {\vphantom{#1}^{\ss #2}}\end{array}\vphantom{#1}}}
\NewDocumentCommand\lowsup{mom}{\IfNoValueTF{#2}{\def\arraystretch{0.1}#1\begin{array}{@{}l@{}}{ \ss{#3}}\\ {\phantom{\ss{#1}}}\end{array}\vphantom{#1}}{\def\arraystretch{0.1}#1\begin{array}{@{}l@{}}{\ss{#3}}\\ {\vphantom{#1}^{\ss #2}}\end{array}\vphantom{#1}}}
\NewDocumentCommand\smallsub{mmo}{\IfNoValueTF{#3}{{{{#1}_{{\sss{#2}}}}}\vphantom{#1}}{{{{#1}^{#3}_{{\sss{#2}}}}}\vphantom{#1}}}
\NewDocumentCommand\tinysub{mmo}{\IfNoValueTF{#3}{{{{#1}_{{\mtiny{#2}}}}}\vphantom{#1}}{{{{#1}^{#3}_{{\mtiny{#2}}}}}\vphantom{#1}}}
\NewDocumentCommand\tiniersub{mmo}{\IfNoValueTF{#3}{{{{#1}_{{\mtinier{#2}}}}}\vphantom{#1}}{{{{#1}^{#3}_{{\mtinier{#2}}}}}\vphantom{#1}}}
\NewDocumentCommand\tiniestsub{mom}{\IfNoValueTF{#3}{{{{#1}_{{\mminiscule{#2}}}}}\vphantom{#1}}{{{{#1}^{#3}^{{\mminiscule{#2}}}}}\vphantom{#1}}}
\NewDocumentCommand\lowsmallsub{mmo}{\IfNoValueTF{#3}{\def\arraystretch{0.3}#1\begin{array}{@{}l@{}}{\phantom{\sss #1}}\\ {\sss #2}\end{array}\vphantom{#1}}{\def\arraystretch{0.1}#1\begin{array}{@{}l@{}}{{\ss #3}}\\ {\sss #2}\end{array}\vphantom{#1}}}
\NewDocumentCommand\lowsmallestsub{mmo}{\IfNoValueTF{#3}{\def\arraystretch{0.1}#1\hspace{-0.2ex}\begin{array}{@{}l@{}}{\phantom{\mminiscule #1}}\\ {\mminiscule #2}\end{array}\vphantom{#1}\hspace{-0.4ex}}{\def\arraystretch{0.1}#1\begin{array}{@{}l@{}}{{\ss #3}}\\ {\mminiscule #2}\end{array}\vphantom{#1}}}
\newcommand{\Hom}{\operatorname{Hom}}
\newcommand{\Ext}{\operatorname{Ext}}
\newcommand{\End}{\operatorname{End}}

\newcommand{\ind}{\operatorname{ind}}

\newcommand{\im}{{\mathrm{im}\,}}

\newcommand{\rad}{\operatorname{rad}}
\newcommand{\soc}{\operatorname{soc}}
\newcommand{\St}{\operatorname{St}}

\DeclareFontFamily{U}{mathx}{\hyphenchar\font45}
\DeclareFontShape{U}{mathx}{m}{n}{
<5> <6> <7> <8> <9> <10>
<10.95> <12> <14.4> <17.28> <20.74> <24.88>
mathx10
}{}
\DeclareSymbolFont{mathx}{U}{mathx}{m}{n}
\DeclareFontSubstitution{U}{mathx}{m}{n}
\let\bigoplus\relax
\DeclareMathSymbol{\bigoplus}{1}{mathx}{"C0}
\begin{document}

\date{\today}

\thanks{}

\title[Humphreys-Verma Conjecture in rank $2$]{{\bf On the Humphreys-Verma Conjecture for semisimple  algebraic groups of rank $2$}}

\author{\sc Stephen Donkin}
\address{Department of Mathematics\\ University of York\\ York YO10 5DD, U.K.} 
\email{stephen.donkin@york.ac.uk}

\author{\sc Haralampos Geranios}
\address{Department of Mathematics\\ National and Kapodistrian University of Athens \\ Athens 157 84, Greece.} 
\email{hgeranios@math.uoa.gr}

\thanks{The second author gratefully acknowledges the support of The Royal Society through the URF research grant URF\textbackslash R\textbackslash221047}

\begin{abstract}
Let $G$ be a connected, semisimple, simply connected algebraic group over an algebraically closed field of positive characteristic. For each restricted dominant weight $\lambda$, there is the  associated principal indecomposable $G_1$-module $Q_1(\lambda)$, where $G_1$ is the first infinitesimal subgroup of $G$. The assertion that, for every such $\lambda$, there exists a $G$-module whose restriction to $G_1$ is isomorphic to $Q_1(\lambda)$ is known as the Humphreys--Verma Conjecture.
For groups of rank $2$, it was shown in \cite{BNPS1} that the Humphreys--Verma Conjecture holds in all cases except one, namely when $G$ is of type $G_2$, the characteristic is $2$, and $\lambda=0$. This case remained completely open. Moreover, in every previously resolved case, the module $Q_1(\lambda)$ could be realized as the restriction of a suitable tilting module. However, in \cite{BNPS2} it was shown that $Q_1(0)$ for $G_2$ in characteristic $2$ cannot arise as the restriction of a tilting module, thereby providing the first counterexample to a conjecture of the first author.
In this paper, we construct a $G$-module whose restriction to $G_1$ is $Q_1(0)$, thereby establishing the Humphreys--Verma Conjecture in the last remaining rank $2$ case. Our construction provides the first known example of a $G$-structure on a principal indecomposable $G_1$-module that does not arise from a tilting module. This reveals a new phenomenon in the study of the Humphreys--Verma Conjecture and suggests new directions for understanding $G$-structures on principal indecomposable $G_1$-modules.
\end{abstract}
\maketitle

\section{Introduction}\label{sec:intro}
Let $\mathbb F$ be an algebraically closed field.  Let $G$ be a connected, semisimple, simply connected algebraic group over $\mathbb F$. We view $G$ as a Chevalley group. Let $T$ be the maximal torus, $\Phi$ the root system, with system of positive roots $\Phi^+$, and $B$ the negative Borel subgroup arising from the Chevalley construction. We assume from now on that the characteristic of ${\mathbb F}$ is $p>0$.  We write $G_1$ for the first infinitesimal subgroup of $G$. By a $G$-structure on a $G_1$-module $X$ we mean a $G$-module whose restriction to $G_1$ is $X$.

We write  $X^+(T)$ for the set of dominant weights for $G$, associated to the maximal torus $T$, and  $X_1\subset X^+(T)$ for the set of restricted dominant weights. For $\lambda\in X^+(T)$ we write $L(\lambda)$ for the corresponding irreducible $G$-module associated to $\lambda$. For $\lambda\in X_1$, the $G_1$-modules $L(\lambda)|_{G_1}$ form a complete set of pairwise non-isomorphic irreducible $G_1$-modules. It follows that every irreducible $G_1$-module admits a $G$-structure, a fact already established in Curtis's pioneering work in the 1960s. For $\lambda\in X_1$ we write $Q_1(\lambda)$ for the injective hull of $L(\lambda)$, as a $G_1$-module. Motivated by,  and in analogy with, Curtis' results, Humphreys and Verma  considered in the 1970s the problem of establishing the existence of  a $G$-structure on the $G_1$-modules $Q_1(\lambda)$ \cite{HV}. The assertion that such a {\mbox{$G$-structure}} exists in general became known as the Humphreys-Verma conjecture. A decade later, the first author proposed several related conjectures including one indicating where such a $G$-structure may be found: precisely that it should be the indecomposable tilting module of a certain highest weight. For $p\geq 2h-2$ this holds, essentially by Jantzen's construction \cite{J3}.  Over time that this should hold in general became known as the Tilting Module Conjecture.
In  \cite{BNPS2}  the authors show that the Tilting Module Conjecture fails when $G$ is the algebraic group of type $G_2$, $\mathbb{F}$ is a field of characteristic $2$, and $\lambda=0$. The primary goal of the present paper is to construct a $G$-structure on $Q_1(0)$, thereby establishing the Humphreys--Verma Conjecture in this case. For all other rank $2$ groups, the Tilting Module Conjecture is known to hold, thanks to the work of Bendel, Nakano, Pillen, and Sobaje \cite{BNPS1}. Consequently, the Humphreys--Verma Conjecture is now known to be true for all rank $2$ groups.
   
 The paper is organized as follows. In Section~\ref{sec:prel}, we establish notation and introduce the basic tools used throughout the paper. In Section~\ref{sec:coh.frob}, we study the cohomology of $G_1$-modules, while Section~\ref{sec:ext.irr} is devoted to the cohomology of $G$-modules. In Section~\ref{sec:sub.in}, we focus on the  structure of certain induced $G$-modules. In Section~\ref{sec:surj.tilt}, we analyse the principal block of the tensor square $\St \otimes \St$ of the Steinberg module $\St$ and construct a surjective $G$-homomorphism from $\St \otimes \St$ onto the tilting module $T(2,1)$. This homomorphism is a key ingredient in the construction of a $G$-structure on $Q_1(0)$, which is carried out in Section~\ref{sec:struct}.     
    
\section{Preliminaries}\label{sec:prel}

For an affine group scheme  $G$ over ${\mathbb F}$ we write ${\rm mod}(G)$ for the category of finite dimensional  $G$-modules. For $V\in{\rm mod}(G)$ we denote by $V^*\in {\rm mod}(G)$ the dual module of $V$ and for a positive integer $m$ we write $V^m$ for the direct sum of $m$ copies of $V$.

We write  $X(T)$ for the character group of $T$. We have that $X(T)$ is equipped with a natural  partial order: for $\lambda,\mu\in X(T)$ we have $\mu\leq \lambda$ if $\lambda-\mu$ is the sum of simple roots. The corresponding group ring ${\mathbb Z}X(T)$ has basis $e(\lambda)$, $\lambda\in X(T)$, whose elements multiply according to the rule $e(\lambda)e(\mu)=e(\lambda+\mu)$.  The Weyl group $W$ of $G$ (with respect to $T$) acts on ${\mathbb Z} X(T)$.  For $\lambda\in X(T)$, we have the element $A(\lambda)={\sum_{w\in W} \rm sgn}(w) e(w\lambda)$, where ${\rm sgn}(w)$ is the sign of the Weyl group element $w\in W$. We write $\rho$ for half the sum of the positive roots. Then $A(\lambda+\rho)$ is divisible by $A(\rho)$ in ${\mathbb Z}X(T)$ and the Weyl character $\chi(\lambda)$ is defined to be the quotient $A(\lambda+\rho)/A(\rho)\in ({\mathbb Z}X(T))^W$.  

A $T$-module $V$ has weight space decomposition $V=\bigoplus_{\lambda\in X(T)} V^\lambda$ and the character of $V$ is defined by ${\rm ch} V=\sum_{\lambda\in X(T)} \dim V^\lambda e(\lambda)$.  For $\lambda\in X(T)$ we have the one dimensional {\mbox{$B$-module}} ${\mathbb F}_\lambda$, on which $T$ acts according to $\lambda$.  We have the induction functor ${\rm ind}\colon{\rm mod}(B)\to {\rm mod}(G)$, and its derived functors $R^i{\ind}_B^G$, $i\geq 0$. We write $X^+(T)\subset X(T)$ for the set of dominant weights.  For $\lambda\in X(T)$,  the induced module ${\rm ind}_B^G {\mathbb F}_\lambda$  is non-zero if and only if $\lambda\in X^+(T)$. For $\lambda\in X^+(T)$ the module  $\nabla(\lambda)={\rm ind}_B^G {\mathbb F}_\lambda$  has simple socle, which we denote  $L(\lambda)$. Moreover, the character of $\nabla(\lambda)$ is $\chi(\lambda)$.

By a good filtration of $V\in {\rm mod}(G)$ we mean a filtration $0\leq V_1\leq  \cdots \leq V_m$ such that, for each $1\leq i\leq m$, the section $V_i/V_{i-1}$ is either $0$ or isomorphic to $\nabla(\lambda^i)$, for some $\lambda^i\in X^+(T)$.  If $V\in {\rm mod}(G)$ admits a good filtration then, for $\lambda\in X^+(T)$, the number of sections  isomorphic to $\nabla(\lambda)$ in a good filtration is independent of the choice of filtration, and will be denoted $(V:\nabla(\lambda))$. We shall use, without further reference,  that the tensor product of modules with a good filtration has a good filtration, \cite[II, 4.21 Proposition]{J1},  and  the acyclicity of modules with a good filtration, in particular that  $H^i(G,\nabla(\lambda)\otimes \nabla(\mu))=0$, for all $\lambda,\mu\in X^+(T)$ and all $i>0$, \cite[II, 4.13  Proposition]{J1}.

For $\lambda\in X^+(T)$ we write $\Delta(\lambda)$ for the Weyl module corresponding to $\lambda$. We have that $\Delta(\lambda)=\nabla(-w_0 \lambda)^*$, where $w_0$ is the longest element of the Weyl group $W$. The Weyl module $\Delta(\lambda)$ has simple head $L(\lambda)$. By a Weyl filtration of $V\in {\rm mod}(G)$ we mean a filtration $0\leq V_1\leq  \cdots \leq V_m$ such that, for each $1\leq i\leq m$, the section $V_i/V_{i-1}$ is either $0$ or isomorphic to $\Delta(\lambda^i)$, for some $\lambda^i\in X^+(T)$.  If $V\in {\rm mod}(G)$ admits a Weyl filtration then, for $\lambda\in X^+(T)$, the number of sections  isomorphic to $\Delta(\lambda)$ in a Weyl filtration is independent of the choice of filtration, and will be denoted $(V:\Delta(\lambda))$. 

For each $\lambda\in X^+(T)$ there is an indecomposable  $G$-module $T(\lambda)$ with unique highest weight $\lambda$ occurring with multiplicity one such that $T(\lambda)$ and the dual module $T(\lambda)^*$ admit a good filtration (equivalently $T(\lambda)$ admits a  good filtration  and a Weyl filtration). The module  $T(\lambda)$  is uniquely determined (up to isomorphism) by these conditions and is called the indecomposable  tilting module of highest weight $\lambda$. We have a surjective $G$-homomorphism from $T(\lambda)$ to $\nabla(\lambda)$ and an embedding of $\Delta(\lambda)$ into $T(\lambda)$. 
   
Let $F:G\to G$ be the Frobenius morphism arising from the Chevalley construction. For $V\in {\rm mod}(G)$ affording representation $\pi:G\to {\rm GL}(V)$ we write   $V^F$ for the $G$-module affording the representation $\pi\circ F$. For $\lambda\in X^+(T)$ the Frobenius twist $\nabla(\lambda)^F$ embeds in $\nabla(p\lambda)$ as the space of $G_1$ fixed points.
  
We shall make extensive use of the $5$-term exact sequence  for a $G$-module $V$:
$$0\to H^1(G/G_1,V^{G_1})\to H^1(G,V)\to H^1(G_1,V)^G\to H^2(G/G_1,V^{G_1})\to H^2(G,V)$$
 
We shall also make use of Brauer's formula  \cite[II, 5.8 Lemma]{J1} and several standard identities involving the characters $\chi(\lambda)$. Namely, for $\lambda\in X^+(T)$ and an element $\sum_{\mu\in X(T)} a_\mu e(\mu)\in ({\mathbb Z} X(T))^W$, one has
 \[\chi(\lambda)\left(\sum_{\mu\in X(T)} a_\mu e(\mu)\right)= \sum_{\mu\in X(T)} a_\mu\,\chi(\lambda+\mu),\]
and
\[\chi(w\cdot\lambda)={\rm sgn}(w)\chi(\lambda)\]
for all $w\in W$ and $\lambda\in X(T)$, where $w\cdot\lambda = w(\lambda+\rho)-\rho$ denotes the dot action of $W$ on $X(T)$ \cite[II, 5.9 (1)]{J1}. 
 We shall also need a couple of simple general lemmas.
 
 \begin{Lemma}\label{lem:bootstrap}
Let $\lambda\in X^+(T)$ and let  $X$ a be  $G$-submodule of the induced $G$-module $\nabla(\lambda)$. For $\mu \in X^+(T)$ with $\mu\leq \lambda$ we have that
$$\Hom_G(L(\mu),\nabla(\lambda)/X)=\Ext^1_G(L(\mu),X).$$
\end{Lemma}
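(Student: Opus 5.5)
We apply $\Hom_G(L(\mu),-)$ to the short exact sequence $0\to X\to \nabla(\lambda)\to \nabla(\lambda)/X\to 0$. This gives the exact sequence
$$\Hom_G(L(\mu),\nabla(\lambda))\to \Hom_G(L(\mu),\nabla(\lambda)/X)\to \Ext^1_G(L(\mu),X)\to \Ext^1_G(L(\mu),\nabla(\lambda)).$$
It suffices to show that the outer terms vanish, at least when $\mu\ne\lambda$. The connecting map in the middle is then the required isomorphism.

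For the right-hand term we use the standard vanishing $\Ext^1_G(L(\mu),\nabla(\lambda))=0$ whenever $\mu\not>\lambda$ (see \cite[II, 4.14]{J1} or the usual argument via $\Ext^1_G(L(\mu),\nabla(\lambda))\cong \Ext^1_B(L(\mu),\mathbb F_\lambda)$, whose weight considerations force $\mu>\lambda$ for nonvanishing). Since $\mu\le\lambda$, this term is zero.

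For the left-hand term, $\nabla(\lambda)$ has simple socle $L(\lambda)$. Hence $\Hom_G(L(\mu),\nabla(\lambda))=0$ unless $\mu=\lambda$, and the isomorphism follows for $\mu<\lambda$.

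The main obstacle is the case $\mu=\lambda$. Here $\Hom_G(L(\lambda),\nabla(\lambda))=\mathbb F$, and the map to $\Hom_G(L(\lambda),\nabla(\lambda)/X)$ is zero if $X\neq 0$, since then $X\supseteq \soc\nabla(\lambda)=L(\lambda)$. It is also zero when $X=0$ for another reason: $\lambda$ occurs in $\nabla(\lambda)$ with multiplicity one, so $L(\lambda)$ is a composition factor only once, and $\Hom_G(L(\lambda),\nabla(\lambda)/X)=0$ for $X\ne0$. We therefore treat the cases separately.

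If $X\ne 0$, then $L(\lambda)\subseteq X$. The $\lambda$-weight space lies in $X$, so $\nabla(\lambda)/X$ has no $\lambda$-weight and $\Hom_G(L(\lambda),\nabla(\lambda)/X)=0$. For the other side, the exact sequence gives $\Ext^1_G(L(\lambda),X)\hookrightarrow\Ext^1_G(L(\lambda),\nabla(\lambda))=0$, so both sides vanish.

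If $X=0$, both sides should be read as in the statement: $\Hom_G(L(\lambda),\nabla(\lambda))=\mathbb F$ versus $\Ext^1_G(L(\lambda),0)=0$. This degenerate case must be excluded, or the statement understood with $X\neq0$ when $\mu=\lambda$. I would note this explicitly, since in applications $X$ is a nonzero submodule or $\mu<\lambda$.
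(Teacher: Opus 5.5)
Your argument is correct and is essentially the paper's: it uses the same long exact sequence, the vanishing of $\Ext^1_G(L(\mu),\nabla(\lambda))$ for $\mu\le\lambda$, and the socle of $\nabla(\lambda)$. The only difference is that the paper does not split off $\mu=\lambda$; it handles all $\mu\le\lambda$ at once by noting $\Hom_G(L(\mu),X)=\Hom_G(L(\mu),\nabla(\lambda))$, so the map into $\Hom_G(L(\mu),\nabla(\lambda)/X)$ is zero.

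Your remark that $X=0$, $\mu=\lambda$ is a genuine exception is right, since the paper's step $\soc_G X=L(\lambda)$ tacitly assumes $X\neq0$. However, delete the muddled sentence claiming the map is ``also zero when $X=0$''. For $X=0$ that map is the identity on $\Hom_G(L(\lambda),\nabla(\lambda))\cong\mathbb F$, which is exactly why that case fails.
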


\begin{proof}
We consider the short exact sequence 
$$0\rightarrow X\rightarrow \nabla(\lambda)\rightarrow  \nabla(\lambda)/X\rightarrow 0$$
Applying $\Hom_G(L(\mu),-)$ to the short exact sequence above, we obtain the exact sequence
\begin{align*}
0 &\longrightarrow \Hom_G(L(\mu),X)
   \longrightarrow \Hom_G(L(\mu),\nabla(\lambda))
   \longrightarrow \Hom_G\bigl(L(\mu),\nabla(\lambda)/X\bigr)\\
  &\longrightarrow \Ext^1_G(L(\mu),X)
   \longrightarrow \Ext^1_G(L(\mu),\nabla(\lambda))
\end{align*}
Now, by  \cite[II, 4.13, Remark (1))]{J1} we have that $\Ext^1_G(L(\mu),\nabla(\lambda))=0$, {\mbox{since $\mu\leq\lambda$.}} Moreover, $\Hom_G(L(\mu),X)= \Hom_G(L(\mu),\nabla(\lambda))$, since ${\rm soc}_GX={\rm soc}_G\nabla(\lambda)=L(\lambda)$. Therefore, it follows that $\Hom_G(L(\mu),\nabla(\lambda)/X)=\Ext^1_G(L(\mu),X)$.
\end{proof}

 \begin{Lemma}\label{lem:wei.filt}
Let $\lambda\in X^+(T)$ and $V\in {\rm mod}(G)$. Suppose that $\lambda+\eta+\rho \in X^+(T)$ for every weight $\eta$ of $V$. Then $\nabla(\lambda)\otimes V$ has a good filtration and the multiplicity of $\nabla(\mu)$ in such a filtration is the dimension of the weight space $V^{\tau-\lambda}$. 
 \end{Lemma}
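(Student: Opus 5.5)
The statement has a typo: $\tau$ should be $\mu$. So the claim is that $(\nabla(\lambda)\otimes V:\nabla(\mu))=\dim V^{\mu-\lambda}$. The plan is to combine a character computation via Brauer's formula with the standard cohomological criterion for good filtrations, and to use the hypothesis $\lambda+\eta+\rho\in X^+(T)$ to remove all cancellation in the character.

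\emph{Character computation.} Write ${\rm ch}\,V=\sum_\eta \dim V^\eta\, e(\eta)$, which is $W$-invariant. By Brauer's formula, as recalled above,
$$ {\rm ch}(\nabla(\lambda)\otimes V)=\chi(\lambda)\,{\rm ch}\,V=\sum_{\eta}\dim V^\eta\,\chi(\lambda+\eta). $$
For each weight $\eta$ of $V$, $\lambda+\eta+\rho$ is dominant, so $\langle \lambda+\eta,\alpha^\vee\rangle\ge -1$ for every simple root $\alpha$. There are two cases.
\begin{itemize}
\item If $\lambda+\eta$ is dominant, the term is an honest Weyl character.
\item Otherwise $\langle\lambda+\eta+\rho,\alpha^\vee\rangle=0$ for some simple $\alpha$. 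Then $s_\alpha\cdot(\lambda+\eta)=\lambda+\eta$, so the identity $\chi(w\cdot\mu)={\rm sgn}(w)\chi(\mu)$ forces $\chi(\lambda+\eta)=0$.
\end{itemize}
Hence
$$ {\rm ch}(\nabla(\lambda)\otimes V)=\sum_{\mu\in X^+(T)}\dim V^{\mu-\lambda}\,\chi(\mu), $$
with nonnegative coefficients and no cancellation.

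\emph{Existence of a good filtration.} This is the real content, since $V$ is arbitrary and need not have a good filtration itself. I would use the criterion that a module $M$ has a good filtration if and only if $\Ext^1_G(\Delta(\nu),M)=0$ for all $\nu\in X^+(T)$ \cite[II, 4.16]{J1}. Here
$$ \Ext^1_G(\Delta(\nu),\nabla(\lambda)\otimes V)=H^1(G,\nabla(\lambda)\otimes V\otimes\Delta(\nu)^*). $$
By the tensor identity and Kempf vanishing, $\nabla(\lambda)\otimes V\otimes \Delta(\nu)^*\cong\ind_B^G({\mathbb F}_\lambda\otimes V\otimes\Delta(\nu)^*)$, with $R^i\ind_B^G$ of this $B$-module zero for $i>0$. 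That vanishing is the thing to check: filter $V\otimes\Delta(\nu)^*$ by $B$-submodules with one-dimensional sections ${\mathbb F}_\eta$. We cannot guarantee $R^i\ind\,{\mathbb F}_{\lambda+\eta}=0$ for every weight $\eta$ of $V\otimes\Delta(\nu)^*$, so I would argue differently.

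The cleaner route is to filter $V$ alone. Filter $V$ as a $B$-module with sections ${\mathbb F}_\eta$, $\eta$ a weight of $V$. For each such $\eta$, $\lambda+\eta+\rho$ is dominant, so $R^i\ind_B^G{\mathbb F}_{\lambda+\eta}=0$ for all $i>0$. This holds either because $\lambda+\eta$ is dominant (Kempf), or because $\lambda+\eta$ is fixed under the dot action of some simple reflection, in which case all $R^i\ind$ vanish \cite[II, 5.4]{J1}. The long exact sequences then give $R^i\ind_B^G({\mathbb F}_\lambda\otimes V)=0$ for $i>0$.

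Moreover, $\ind_B^G({\mathbb F}_\lambda\otimes V)\cong\nabla(\lambda)\otimes V$ by the tensor identity, and it acquires a filtration whose sections are $\ind_B^G{\mathbb F}_{\lambda+\eta}$ for the $\eta$ in the chosen $B$-filtration order. Exactness holds because the higher derived functors vanish. Each section is either $\nabla(\lambda+\eta)$ or $0$, so this is a good filtration. Choosing the $B$-filtration compatibly (a $B$-composition series of $V$ exists with one-dimensional factors, $B$ being solvable) makes this immediate, and it avoids the $\Ext$ criterion entirely.

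\emph{Multiplicity.} The number of sections isomorphic to $\nabla(\mu)$ is the number of $\eta$ with $\lambda+\eta=\mu$, namely $\dim V^{\mu-\lambda}$. This agrees with the character computation, and since Weyl characters of dominant weights are linearly independent, the multiplicities are well-defined in any case.

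The main obstacle is ensuring exactness of $\ind_B^G$ along the $B$-filtration. This rests on the vanishing $R^1\ind_B^G{\mathbb F}_{\lambda+\eta}=0$ for all weights $\eta$ of $V$, which is exactly where the hypothesis $\lambda+\eta+\rho\in X^+(T)$ is used.
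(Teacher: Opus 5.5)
Your final argument is the paper's proof, and it is correct: the tensor identity, a $B$-composition series of $V$, vanishing of $R^i\ind_B^G{\mathbb F}_{\lambda+\eta}$ to get exactness, and reading off the sections $\ind_B^G{\mathbb F}_{\lambda+\eta}$ as either $\nabla(\lambda+\eta)$ or $0$. You are slightly more careful than the paper, which cites only Kempf's theorem: when $\lambda+\eta$ is not dominant but $\langle\lambda+\eta+\rho,\alpha^\vee\rangle=0$, one needs the vanishing result \cite[II, 5.4]{J1} that you invoke, while the character computation and the abandoned $\Ext$-criterion detour are not needed.
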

 
 \begin{proof} We have $\nabla(\lambda)\otimes V\cong {\rm ind}_B^G({\mathbb F}_\lambda\otimes V)$, by the tensor identity  \cite[I, Proposition 3.6]{J1}. We regard $V$ as a $B$-module and we choose a composition series $0=V_0 < V_1<\cdots < V_m=V$ for $V$. Thus, for $1\leq i\leq m$, the section  $V_i/V_{i-1}$ is isomorphic to ${\mathbb F}_{\eta_i}$, for some $\eta_i\in X(T)$. 
 
 We have the ascending filtration $0=V_0\otimes {\mathbb F}_\lambda < V_1\otimes {\mathbb F}_\lambda < \cdots < V_m\otimes {\mathbb F}_\lambda=V\otimes {\mathbb F}_\lambda$. For $1\leq j\leq m$ we have $(V_j\otimes {\mathbb F}_\lambda)/(V_{j-1}\otimes {\mathbb F}_\lambda)\cong {\mathbb F}_{\eta_i}$ and hence, by Kempf's vanishing theorem  \cite[II, Proposition 4.5]{J1}, $R^1 {\rm ind}_B^G((V_j\otimes {\mathbb F}_\lambda)/(V_{j-1}\otimes {\mathbb F}_\lambda))=0$. Hence, we also have $R^1{\rm ind}_B^G (V_i\otimes {\mathbb F}_\lambda)=0$. We may identify  ${\rm ind}_B^G (V_i\otimes {\mathbb F}_\lambda)$ with a submodule of ${\rm ind}_B^G (V\otimes {\mathbb F}_\lambda)$ and the quotient  ${\rm ind}_B^G (V_i\otimes {\mathbb F}_\lambda)/{\rm ind}_B^G (V_{i-1}\otimes {\mathbb F}_\lambda)$ with $\ind_B^G(V_i\otimes {\mathbb F}_\lambda)/(V_{i-1}\otimes {\mathbb F}_\lambda)$. The result follows.
 \end{proof}

From now on, and until the end of the paper we specialise  to the group $G$ of Lie type $G_2$, which is the main focus of our attention and take $\mathbb F$ to have characteristic $2$.  We have simple roots $\alpha,\beta$, with $\alpha$ short, and corresponding fundamental dominant weights $\omega_\alpha,\omega_\beta$. For $a,b\in {\mathbb Z}$ we write $(a,b)$ for $a\omega_\alpha+ b\omega_\beta$. Thus $\alpha=(2,-1)$, $\beta=(-3,2)$. We have that $X^+(T)=\{(a,b)\in \mathbb{Z}^2\mid a,b\geq0\}$ and $X_1=\{(a,b)\in X^+(T)\mid a,b\leq 1\}$. We write $\St$ for the Steinberg module for $G$. We have that $\St=\nabla(1,1)=\Delta(1,1)$. The following results are standard and their statements can be found for instance in \cite[\S3.2.1]{A}.

\begin{Lemma}\label{lem:ch.T(2,1)}
For the tilting module $T(2,1)$ we have the following:
\begin{enumerate}[label = (\roman*), font=\normalfont]
\item $\St\otimes L(1,0)=T(2,1)$;
\item $T(2,1)|_{G_1}=Q_1(0,1)$;
\item The character of $T(2,1)$ is $\chi(2,1)+\chi(0,2)+\chi(3,0)+\chi(2,0)+\chi(0,1)$. 
\end{enumerate}
\end{Lemma}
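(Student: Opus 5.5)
Our plan is to prove (iii) first, by a character computation, and then deduce (i) and (ii) from general facts about tilting modules and the Steinberg module.

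For the character, note that $\St=\nabla(1,1)$. We apply Brauer's formula with $\sum a_\mu e(\mu)={\rm ch}\,L(1,0)$. In characteristic $2$ the module $L(1,0)$ is the $6$-dimensional quotient of the $7$-dimensional $\nabla(1,0)$; the zero weight drops out because $\nabla(1,0)$ has socle $L(1,0)$ and a trivial composition factor. Hence ${\rm ch}\,L(1,0)$ is the sum of $e(\mu)$ over the six short roots $\mu$. The formula gives ${\rm ch}(\St\otimes L(1,0))=\sum_{\mu}\chi((1,1)+\mu)$, summed over the short roots $\pm(2,-1)$, $\pm(-1,1)$, $\pm(1,0)$. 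The six weights are $(3,0)$, $(-1,2)$, $(0,2)$, $(2,0)$, $(2,1)$ and $(0,1)$. The only non-dominant one is $(-1,2)$, which satisfies $s_\alpha\cdot(-1,2)=(-1,2)$; so $\chi(-1,2)=0$. This leaves $\chi(2,1)+\chi(0,2)+\chi(3,0)+\chi(2,0)+\chi(0,1)$.

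Next we show that $\St\otimes L(1,0)$ is tilting. The Steinberg module is tilting, and it is a standard fact that $\St\otimes V$ is tilting for any restricted irreducible $V$, or even for any $V$ (Jantzen II.E / II.10). Alternatively, one can use Lemma~\ref{lem:wei.filt} for the good filtration and duality for the Weyl filtration: the dual of $\St\otimes L(1,0)$ is again of the form $\St\otimes L(1,0)$, since $-w_0=1$ for $G_2$. Lemma~\ref{lem:wei.filt} does not apply directly, because $(1,1)+\mu+\rho$ has to be dominant for every weight $\mu$ of $L(1,0)$, and this holds since the coordinates of each $\mu$ are at least $-2$. Since all weights of $L(1,0)$ are short roots, each is at least $(-2,-1)$ coordinatewise, so $(1,1)+\mu+(1,1)$ is dominant. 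The multiplicity of $\nabla(-1,2)$ then counts as zero after the reflection. So the module has both a good and a Weyl filtration, and its unique highest weight $(2,1)$ occurs with multiplicity one.

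For (ii), $Q_1(\lambda)$ for restricted $\lambda$ is a summand of $\St\otimes L(\mu)$ restricted to $G_1$. More precisely, $\St\otimes L(1,0)$ restricted to $G_1$ is projective, and $\Hom_{G_1}(L(0,1),\St\otimes L(1,0))\cong\Hom_{G_1}(\St,\St\otimes L(1,0)\otimes L(0,1))$. We count dimensions: $\dim\St=64$ and $\dim L(1,0)=6$, giving total dimension $384$. We aim to show $Q_1(0,1)$ has dimension $384$ and is a summand. The socle of the $G_1$-module is computed by weight considerations: $\Hom_{G_1}(L(\lambda),\St\otimes L(1,0))=\Hom_{G_1}(L(\lambda)\otimes L(1,0),\St)$, which is nonzero exactly when $\St$ is a $G_1$-composition factor in the head of $L(\lambda)\otimes L(1,0)$. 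Looking at highest weights, this happens for $\lambda=(0,1)$: the product $L(0,1)\otimes L(1,0)$ has highest weight $(1,1)$, and $\St$ is projective, so it splits off as a summand. For the other $\lambda$, $L(\lambda)\otimes L(1,0)$ cannot contain $\St$, since its weights would have to be linked and reach $(1,1)$ modulo $2X(T)$ with the right multiplicity. A $G$-equivariance argument then shows that the $G_1$-socle is $L(0,1)$ with multiplicity one, so the $G_1$-module is $Q_1(0,1)$.

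Finally, (i) follows from (ii): restricted to $G_1$ the module $\St\otimes L(1,0)$ is indecomposable, so it is indecomposable as a $G$-module. Being tilting with highest weight $(2,1)$, it is $T(2,1)$. We expect the main obstacle to be the socle computation in (ii), namely excluding other $L(\lambda)$ and showing multiplicity one. We would try to handle it by using the $G$-structure of $\Hom_{G_1}(L(\lambda),-)$ together with the character from (iii).
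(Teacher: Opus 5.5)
The paper does not prove this lemma; it quotes it as standard from \cite[\S3.2.1]{A}. Your route is the natural self-contained argument and can be completed: Brauer's formula for the character, Lemma~\ref{lem:wei.filt} plus self-duality for the tilting property, and a $G_1$-socle count for indecomposability. As written, however, the step that carries (i) and (ii) is not actually proved, and some intermediate claims need correcting.

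For the tilting property, drop the assertion that $\St\otimes V$ is tilting for every $V$. It is false: for example $\St\otimes L(1,0)^F\cong L(3,1)$ is simple of dimension $384$, while $\dim\nabla(3,1)=448$, so it has no good filtration. Also, contrary to what you write, Lemma~\ref{lem:wei.filt} applies directly, since $(2,2)+\eta$ is dominant for every short root $\eta$. It gives at once a good filtration with sections $\nabla(2,1),\nabla(0,2),\nabla(3,0),\nabla(2,0),\nabla(0,1)$, which makes the Brauer computation redundant, and $w_0=-1$ then gives the Weyl filtration. (Minor: $L(1,0)$ is the socle of $\nabla(1,0)$, not a quotient.)

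The real gap is in (ii).

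\emph{The adjunction.} Your first adjunction has a spurious factor $\St$. The correct statement is $\Hom_{G_1}(L(\lambda),\St\otimes L(1,0))\cong\Hom_{G_1}(L(\lambda)\otimes L(1,0),\St)$. Its dimension is the $G_1$-composition multiplicity of $\St$ in $L(\lambda)\otimes L(1,0)$, because $\St$ is simple and $G_1$-injective. By Steinberg's tensor product theorem, only $G$-composition factors $L(\nu)$ with $\nu=(1,1)+2\nu_1$, $\nu_1\in X^+(T)$, contribute, each $\dim L(\nu_1)$ times.

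\emph{The easy cases.} For $\lambda=(0,0),(1,0)$ there are no such $\nu$, since $\nu\geq(1,1)>(2,0)>(1,0)$. For $\lambda=(0,1)$ the highest weight $(1,1)$ has multiplicity one, and every $\nu$ with $\nu_1\neq 0$ lies strictly above it. So the multiplicity is exactly one, and no ``$G$-equivariance argument'' is needed.

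\emph{The case $\lambda=(1,1)$.} Here your sketch gives no argument. Since $(1,1)=(0,1)+(1,0)$ is a weight of $\St\otimes L(1,0)$, weight considerations cannot exclude $\St$. Compare $\St\otimes L(0,1)=T(1,2)\oplus\St^2$ in Lemma~\ref{lem:ch.T(1,2)}, where it does occur. What excludes it is linkage. Since $\mathbb{Z}\Phi=X(T)$ for $G_2$, the linkage class of $(1,1)$ is $(1,1)+2X(T)$. On the other hand, every composition factor of $\St\otimes L(1,0)$ is linked to one of $(2,1),(0,2),(3,0),(2,0),(0,1)$, none of which has both coordinates odd.

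Hence the $G_1$-socle of the $G_1$-injective module $\St\otimes L(1,0)$ is $L(0,1)$, so its restriction to $G_1$ is $Q_1(0,1)$. Thus $\St\otimes L(1,0)$ is $G$-indecomposable and therefore equal to $T(2,1)$. This gives (i) and (ii), and (iii) follows from your character computation.
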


\begin{Lemma}\label{lem:ch.T(1,2)}
For the tilting module $T(1,2)$ we have the following:
\begin{enumerate}[label = (\roman*), font=\normalfont]
\item $\St\otimes L(0,1)=T(1,2)\oplus \St^2$;
\item $T(1,2)|_{G_1}=Q_1(1,0)$;
\item The character of $T(1,2)$ is $\chi(1,2)+\chi(4,0)+\chi(2,1)+\chi(3,0)+\chi(3,0)+\chi(1,0)$. 
\end{enumerate}
\end{Lemma}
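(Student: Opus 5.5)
The statement to prove is Lemma~\ref{lem:ch.T(1,2)}, and I would argue exactly as for Lemma~\ref{lem:ch.T(2,1)}. I would first compute the character of $\St\otimes L(0,1)$, then split off the summands using tilting theory, and finally identify the restriction to $G_1$ using the standard fact that $\St\otimes L(\mu)$ restricted to $G_1$ is injective.

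\textbf{Step 1: the character of $\St\otimes L(0,1)$.} In characteristic $2$ the module $L(0,1)$ has dimension $14$ (the adjoint module $\nabla(0,1)$ is irreducible for $p=2$). I expect the character of $L(0,1)$ to be
\[
\sum_{\mu\in W(0,1)} e(\mu) \;+\; \sum_{\mu\in W(1,0)} e(\mu) \;+\; 2e(0),
\]
where both orbits have $6$ elements. I would check this dimension count against $\dim\nabla(0,1)=14$.

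Since $\St=\nabla(1,1)$, Brauer's formula gives
\[
{\rm ch}\bigl(\St\otimes L(0,1)\bigr)=\sum_\mu a_\mu\,\chi\bigl((1,1)+\mu\bigr).
\]
I would then straighten each term using $\chi(w\cdot\lambda)={\rm sgn}(w)\chi(\lambda)$; terms whose weight lies on a dot-wall vanish. The outcome should be
\[
\chi(1,2)+\chi(4,0)+\chi(2,1)+2\chi(3,0)+\chi(1,0)+2\chi(1,1).
\]
The bookkeeping of cancellations here is routine but fiddly.

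\textbf{Step 2: splitting off the summands.} $\St\otimes L(0,1)$ is a tilting module: $\St$ is tilting, it is injective and projective over $G_1$, and the tensor product $\St\otimes L(\mu)$ with restricted $\mu$ is tilting by the standard result (as in \cite[\S3.2.1]{A}, Pillen). Hence it decomposes as $T(1,2)\oplus M$, where $M$ is tilting with all highest weights $\le(1,2)$.

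\begin{itemize}
\item $\St=T(1,1)$ is a summand as often as $\Hom_G(\St,\St\otimes L(0,1))$ allows. Since $\St$ is alone in its linkage class (up to the relevant block), the $\chi(1,1)$-part of the character splits off. This gives $\St^2$.
\item The remainder lies in the block of $(1,2)$, and I must show it is indecomposable. This is the main obstacle.
\end{itemize}

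\textbf{Step 3: indecomposability via the $G_1$-structure.} My approach is to use $G_1$ as well. $\St\otimes L(0,1)$ restricted to $G_1$ is projective, and
\[
\dim\Hom_{G_1}\bigl(\St\otimes L(0,1),L(1,0)\bigr)=\dim\Hom_{G_1}\bigl(\St,L(0,1)^*\otimes L(1,0)\bigr),
\]
which counts the $(1,1)$-multiplicity of $L(0,1)\otimes L(1,0)$ over $G_1$. Comparing dimensions, $\dim Q_1(1,0)$ should equal $14\cdot 64-2\cdot 64$, the dimension of the remainder.

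To pin this down I would show that $\Hom_{G_1}(L(1,0),-)$ of the remainder is one-dimensional, so that its $G_1$-socle is $L(1,0)$. Then the remainder, restricted to $G_1$, is $Q_1(1,0)$. Indecomposability over $G_1$ forces indecomposability over $G$, so the remainder equals $T(1,2)$. This proves (i) and (ii) together, and (iii) is the Step 1 character minus $2\chi(1,1)$.

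For the socle computation I would use the weights of the remainder together with the $G_1$-injectivity. Alternatively, I would show that $Q_1(1,0)$ has dimension $768$ via the known $G_1$-Cartan data for $G_2$ at $p=2$, and match this with the highest weight $(1,2)=2(1,1)+w_0(1,0)$ in Jantzen's description.
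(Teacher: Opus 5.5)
\textbf{Step 1 does not produce what you assert, and part (iii) as printed is a misprint.} Carrying out Brauer's formula, the short roots $(1,0),(-1,0),(2,-1),(-2,1),(-1,1),(1,-1)$ contribute $\chi(2,1),\chi(0,1),\chi(3,0),0,\chi(0,2),\chi(2,0)$. The long roots $(0,1),(0,-1),(3,-1),(-3,1),(-3,2),(3,-2)$ contribute $\chi(1,2),\chi(1,0),\chi(4,0),-\chi(0,1),-\chi(0,2),0$, since $s_\alpha\cdot(-2,2)=(0,1)$, $s_\alpha\cdot(-2,3)=(0,2)$, and $(-1,2)$, $(4,-1)$ lie on walls. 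The zero weight gives $2\chi(1,1)$. Hence
\[
{\rm ch}\bigl(\St\otimes L(0,1)\bigr)=\chi(1,2)+\chi(4,0)+\chi(2,1)+\chi(3,0)+\chi(2,0)+\chi(1,0)+2\chi(1,1),
\]
with $\chi(2,0)$ where you (and the printed statement) have a second $\chi(3,0)$. Your own dimension count detects this: the Weyl dimensions $286,182,189,77,27,7$ sum to $768=14\cdot 64-2\cdot 64$, whereas the printed formula has dimension $818$. The paper quotes this lemma without proof, and the misprint plays no role later. But it means (iii) cannot be proved as written. Your method establishes the corrected character, and the step where you say the computation ``should'' give the printed formula is exactly the one that fails.

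\textbf{Step 3 has a gap.} A one-dimensional $\Hom_{G_1}(L(1,0),R)$, for the principal-block summand $R$, does not give $\soc_{G_1}R=L(1,0)$. You must also exclude $L(0,0)$, $L(0,1)$ and $\St$ from the socle, and the weights of $R$ do not do this. The clean way: $\St\otimes L(0,1)$ is $G_1$-injective, $L(0,1)$ is self-dual, and $\St$ is $G_1$-injective with $\End_{G_1}(\St)=\mathbb F$. So the multiplicity of $Q_1(\lambda)$ in $\St\otimes L(0,1)|_{G_1}$ is
\[
\dim\Hom_{G_1}\bigl(L(\lambda),\St\otimes L(0,1)\bigr)=\dim\Hom_{G_1}\bigl(L(\lambda)\otimes L(0,1),\St\bigr)=[L(\lambda)\otimes L(0,1):\St]_{G_1}.
\]
By Steinberg's tensor product theorem, $L(\mu)|_{G_1}$ involves $\St$ only if $\mu\equiv(1,1)$ modulo $2X(T)$. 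By linkage, a composition factor of $\nabla(\nu)$ has this property iff $\nu$ does. Brauer gives ${\rm ch}(L(1,0)\otimes L(0,1))=\chi(1,1)+\chi(2,0)+\chi(1,0)-\chi(0,1)$ and ${\rm ch}(L(0,1)\otimes L(0,1))=\chi(2,0)+\chi(0,2)+\chi(3,0)+\chi(0,1)+\chi(0,0)$. So the multiplicities for $\lambda=(0,0),(1,0),(0,1),(1,1)$ are $0,1,0,2$, and $\St\otimes L(0,1)|_{G_1}\cong Q_1(1,0)\oplus\St^2$. By Krull--Schmidt, $R|_{G_1}\cong Q_1(1,0)$ is indecomposable, so $R\cong T(1,2)$. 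This gives (i), (ii) and $\dim Q_1(1,0)=768$ without appeal to Cartan data or to Jantzen's construction, which requires $p\geq 2h-2$.

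Two smaller corrections. First, rather than invoking a general ``standard result'' that $\St\otimes L(\mu)$ is tilting (not something you can simply quote at $p=2$), use that $L(0,1)=\nabla(0,1)$ is self-dual, hence equals $\Delta(0,1)$ and is tilting. Second, $\St$ is not alone in its linkage class: $(3,1)$ is linked to $(1,1)$. What splits off $\St^2$ is that $(1,1)$ is the only weight in the character congruent to $(1,1)$ modulo $2X(T)$.
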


\begin{Lemma}\label{lem:ch.stst}
For the tilting module $\St\otimes \St$ we have the following:
\begin{enumerate}[label = (\roman*), font=\normalfont]
\item $\St\otimes \St|_{G_1}=Q_1(0,0)\oplus Q_1(0,1)^2\oplus \St^{16}$;
\item The character of $\St\otimes \St$ is 
\begin{align*}
\chi(2,2) +& \chi(5,0) + \chi(1,2) + \chi(0,3)+ 3\chi(2,1) + 2\chi(4,0) + 2\chi(0,2) + 3\chi(3,0)\\
+& 2\chi(2,0) + 2\chi(0,1) + \chi(1,0) + \chi(0,0)+ 2\chi(3,1) + 2\chi(1,1).
\end{align*}
\end{enumerate}
\end{Lemma}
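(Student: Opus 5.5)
The statement is Lemma~\ref{lem:ch.stst}, and I would prove the two parts in reverse order, deriving (ii) by a character computation and then (i) from (ii) together with Lemmas~\ref{lem:ch.T(2,1)} and~\ref{lem:ch.T(1,2)}.

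\textbf{Part (ii).} Since $\St=\nabla(1,1)$ and $\St=\nabla(\rho)$, I would compute ${\rm ch}(\St\otimes\St)$ by Brauer's formula in the form
\[\chi(\rho)\,{\rm ch}\,\St=\sum_{\mu}\dim \St^{\mu}\,\chi(\rho+\mu).\]
This requires the weight multiplicities of $\St$ (of dimension $2^6=64$). They can be read off from ${\rm ch}\,\St=e(-\rho)\prod_{\gamma\in\Phi^+}(1+e(\gamma))$, that is, by listing the sums of subsets of the six positive roots. I would then reduce each $\chi(\rho+\mu)$ with $\rho+\mu$ non-dominant using $\chi(w\cdot\lambda)={\rm sgn}(w)\chi(\lambda)$: terms on a wall vanish, and the others become $\pm\chi$ of a dominant weight. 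Working with $W$-orbits of the weights $\mu$ (dominant representatives together with their orbit sizes) makes the bookkeeping manageable. As a check, the dimensions of the resulting $\chi(\lambda)$ should sum to $4096$.

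\textbf{Part (i).} The module $\St$ is projective and injective for $G_1$, so $\St\otimes\St|_{G_1}$ is injective and hence a direct sum of $Q_1(\lambda)$, $\lambda\in X_1$. The multiplicity of $Q_1(\lambda)$ equals
\[\dim\Hom_{G_1}(L(\lambda),\St\otimes\St)=\dim\Hom_{G_1}(L(\lambda)\otimes\St,\St).\]
I would compute this by decomposing $L(\lambda)\otimes\St$ into $G_1$-injectives. First, $\St\otimes\St$ contains $Q_1(0)$ exactly once, since $\Hom_{G_1}(\St,\St)$ is one-dimensional. Next, Lemma~\ref{lem:ch.T(2,1)} gives $\St\otimes L(1,0)=Q_1(0,1)$, so
\[\Hom_{G_1}(L(1,0)\otimes\St,\St)=\Hom_{G_1}(Q_1(0,1),\St)=0.\]
Hence $Q_1(1,0)$ does not occur. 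Similarly, by Lemma~\ref{lem:ch.T(1,2)}, $L(0,1)\otimes\St=Q_1(1,0)\oplus\St^2$, which gives a two-dimensional $\Hom$ into $\St$, so $Q_1(0,1)$ occurs twice. For $\lambda=(1,1)$, the space $\Hom_{G_1}(\St\otimes\St,\St)$ has dimension equal to the multiplicity of $Q_1(1,1)=\St$ in $\St\otimes\St$. I would instead obtain this by dimension count: $\dim Q_1(0,1)=\dim T(2,1)=\dim \St\cdot\dim L(1,0)=64\cdot 6$, while $\dim Q_1(0)$ is found from a suitable dimension identity (one could alternatively use $\sum_\lambda\dim L(\lambda)\dim Q_1(\lambda)=|G_1|=2^{14}$, with $\dim L(0,1)=14$, $\dim L(1,0)=6$, $\dim L(1,1)=64$). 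The remaining dimension, $4096-\dim Q_1(0)-2\cdot 384$, divided by $64$, should be $16$.

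The main obstacle is $\dim Q_1(0)$. From the $|G_1|$ identity,
\[\dim Q_1(0)+14\cdot384+6\cdot\dim Q_1(1,0)+64\cdot64=16384.\]
Here $\dim Q_1(1,0)=\dim T(1,2)=64\cdot14-128=768$, which gives $\dim Q_1(0)=16384-5376-4608-4096=2304$. Then $4096-2304-768=1024=16\cdot64$, as required.
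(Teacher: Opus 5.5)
Your proposal is correct, and it takes a different route from the paper simply because the paper gives no proof: this lemma is quoted, together with Lemmas~\ref{lem:ch.T(2,1)} and~\ref{lem:ch.T(1,2)}, from \cite[\S3.2.1]{A}.

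\textbf{Part (ii).} Your Brauer computation goes through. It amounts to ${\rm ch}(\St\otimes\St)=\sum_{S\subseteq\Phi^+}\chi\bigl(\sum_{\gamma\in S}\gamma\bigr)$. The dominant weights $(1,1),(2,0),(0,1),(1,0),(0,0)$ of $\St$ have multiplicities $1,2,2,4,4$. After reducing by the dot action, the negative terms are absorbed by positive contributions from the other orbits. These are $-\chi(1,2)$ and $-2\chi(2,0)$ from the orbit of $\rho$, $-2\chi(1,1)$ from the orbit of $(2,0)$, and $-2\chi(0,2)$, $-2\chi(0,1)$ from the orbit of $(0,1)$. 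What remains is exactly the stated character.

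\textbf{Part (i).} Two steps deserve a line of justification.
\begin{enumerate}
\item $\Hom_{G_1}(Q_1(\mu),\St)=0$ for $\mu\neq(1,1)$. Since $\St$ is simple and projective for $G_1$, a nonzero map $Q_1(\mu)\to\St$ would be a split surjection. That would make $\St$ a summand of the indecomposable $Q_1(\mu)\not\cong\St$.
\item $\sum_\lambda\dim L(\lambda)\dim Q_1(\lambda)=2^{14}$. This holds because $\mathbb F[G_1]\cong\bigoplus_\lambda Q_1(\lambda)^{\dim L(\lambda)}$, as $\Hom_{G_1}(L(\lambda),\mathbb F[G_1])\cong L(\lambda)^*$.
\end{enumerate}

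Your route makes the lemma self-contained, which the citation does not. Part (ii) uses only Brauer's formula and the dot action, which the paper uses anyway. Part (i) reduces to parts (i)--(ii) of the two preceding lemmas, and gives $\dim Q_1(0)=2304$ as a by-product.

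You were also right to take $\dim T(1,2)=768$ from Lemma~\ref{lem:ch.T(1,2)}(i) rather than from (iii). The character printed in (iii) has dimension $818$. One of its two $\chi(3,0)$ terms should be $\chi(2,0)$, as the same Brauer computation for $\chi(1,1)\,\chi(0,1)$ shows.
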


\section{Cohomology of $G_1$-modules}\label{sec:coh.frob}
Here, we calculate the first cohomology group $H^1(G_1,M)$ for certain $G_1$-modules $M$.  The  following result from \cite[Lemma 3.3]{DS} (see also \cite[\S 5.3 (2)]{J2}) gives a complete list of extensions for simple $G_1$-modules, as $G$-modules. 

\begin{Lemma}\label{lem.coh.simples}
 The following hold:
\begin{enumerate}[label = (\roman*), font=\normalfont]
\item there are no self extensions of simple $G_1$-modules;
\item $H^1(G_1, L(1,0))=T(1,0)^{\rm{F}}$;
\item $H^1(G_1, L(0,1))=\nabla(1,0)^{\rm{F}}$;
\item $H^1(G_1, L(1,0)\otimes L(0,1))=0$.
\end{enumerate}
\end{Lemma}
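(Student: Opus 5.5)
The lemma is quoted from \cite[Lemma 3.3]{DS} and \cite[\S 5.3 (2)]{J2}. A self-contained proof would follow the standard route for $G_1$-cohomology of simple modules via the Andersen--Jantzen description of $H^\bullet(G_1,\nabla(\lambda))$ and the structure of the relevant Weyl and induced modules for $G_2$ in characteristic $2$.

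The plan has four steps.

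\textbf{Step 1: set-up.} For $\lambda\in X_1$, the space $H^1(G_1,L(\lambda))$ is a $G$-module on which $G_1$ acts trivially, so it has the form $M^F$ for some $G$-module $M$. Fix a restricted weight $\mu$. Then $\Hom_G(L(\mu)\otimes L(\nu)^F, H^1(G_1,L(\lambda)))$ is detected by $\Ext^1_{G_1}(L(\mu),L(\lambda))$ as a $G$-module, by the Lyndon--Hochschild--Serre sequence. So it suffices to compute $\Ext^1_{G_1}(L(\mu),L(\lambda))\cong H^1(G_1,L(\mu)^*\otimes L(\lambda))$ for all restricted $\mu,\lambda$. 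Parts (ii)--(iv) are the cases where one of the two modules is trivial, or the tensor product is $L(1,0)\otimes L(0,1)$.

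\textbf{Step 2: simples in terms of $\nabla$'s.} For $p=2$ we have $L(1,0)=\nabla(1,0)$ of dimension $6$ (the $7$-dimensional Weyl module has a trivial socle/head), and $L(0,1)$ of dimension $14$ is the quotient of $\nabla(0,1)$ by $L(1,0)$, since $\chi(0,1)$ has dimension $14$ in the adjoint-type description. Use $H^1(G_1,\nabla(\lambda))=0$ unless $\lambda\in W\cdot 0+pX(T)$. Also use $H^1(G_1,\nabla(p\omega+w\cdot 0))=\nabla(\omega)^F$ for $\ell(w)=1$ in the generic range, with Andersen--Jantzen for small $p$. Via the long exact sequences attached to $0\to L(\lambda)\to\nabla(\lambda)\to\cdots$, this computes $H^1(G_1,L(\lambda))$.

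\textbf{Step 3: the individual parts.} Since $\alpha=(2,-1)=2\omega_\alpha-\omega_\beta$, the weights $s_\alpha\cdot 0=-\alpha$ and $s_\beta\cdot0=-\beta$ shifted by $2X(T)$ show which restricted simples have nonzero $H^1$.
\begin{itemize}
\item[(i)] No self-extensions of simple $G_1$-modules: this follows from the linkage principle for $G_1$ together with a weight parity argument ($\lambda-\lambda\notin$ the relevant set), or by \cite[II.12.9]{J1}, which applies except for small-rank exceptions; for $G_2$ with $p=2$ it is checked directly.
\item[(ii)] The $7$-dimensional $T(1,0)$ appears because $H^1(G_1,L(1,0))$ receives contributions from both $\nabla(1,0)$ and the trivial composition factor.
\item[(iii)] $H^1(G_1,L(0,1))=\nabla(1,0)^F$.
\item[(iv)] $H^1(G_1,L(1,0)\otimes L(0,1))=0$, since $L(1,0)\otimes L(0,1)$ has no composition factors in the right coset.
\end{itemize}

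\textbf{Step 4: main obstacle.} The hard part is identifying the $G$-module structures precisely, in particular showing that (ii) gives $T(1,0)^F$ rather than $\nabla(1,0)^F$ or $\Delta(1,0)^F$. For this I would compute $\dim H^1$ via the $B_1$-cohomology spectral sequence. I would then show the resulting module is self-dual, using Serre-type duality for $G_1$ together with self-duality of $L(1,0)$, and that it is indecomposable. These properties force it to be $T(1,0)^F$.

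In practice the paper simply cites \cite{DS}, and I would do the same.
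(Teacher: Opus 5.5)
The paper gives no argument for this lemma; it quotes \cite[Lemma 3.3]{DS} (see also \cite[\S 5.3 (2)]{J2}). Your closing fallback is therefore exactly the paper's treatment. The self-contained outline, however, is not a proof, and it goes wrong at the decisive points.

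First, Step~2 misidentifies the modules. In characteristic $2$, $\nabla(1,0)$ is $7$-dimensional and fits into $0\to L(1,0)\to\nabla(1,0)\to L(0,0)\to 0$, so $L(1,0)\neq\nabla(1,0)$. On the other hand, $L(0,1)=\nabla(0,1)$ is irreducible of dimension $14$. Its socle is $L(0,1)$, so ``$\nabla(0,1)/L(1,0)$'' makes no sense; compare the character $\chi(2,0)+\chi(1,0)+\chi(1,1)$ of $\nabla(1,0)\otimes L(0,1)$, which has dimension $98=7\cdot 14$. Also, $T(1,0)$ is $8$-dimensional, uniserial with layers $L(0,0),L(1,0),L(0,0)$.

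More seriously, the generic formula you plan to use gives the wrong answer precisely in (ii). The only way to write $(1,0)=2\omega+w\cdot 0$ with $\ell(w)=1$ is $w=s_\beta$, $\omega=(-1,1)$, which is not dominant, so the formula predicts $H^1(G_1,\nabla(1,0))=0$. Since $\nabla(1,0)^{G_1}=0$ and $H^1(G_1,L(0,0))=0$, the sequence $0\to L(0,0)\to H^1(G_1,L(1,0))\to H^1(G_1,\nabla(1,0))\to 0$ would then force $H^1(G_1,L(1,0))=L(0,0)$.

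In fact $H^1(G_1,\nabla(1,0))=\nabla(1,0)^{\rm F}$ (Lemma~\ref{lem.coh.ind}(i), which the paper deduces from (ii)). This is a genuinely $p=2$ phenomenon. The structure constant $N_{\alpha,\alpha+\beta}=\pm 2$ vanishes, so in $\mathrm{Lie}(U)$ ($U$ the unipotent radical of $B$) the root vector attached to the highest short root $2\alpha+\beta=(1,0)$ is not a commutator. Hence $H^1(U_1,k)$ acquires a third weight, coming from $2\alpha+\beta$, besides those from $\alpha$ and $\beta$. In the $B_1$-spectral-sequence computation this is exactly what produces $\nabla(1,0)^{\rm F}$. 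Your outline never supplies this input; ``Andersen--Jantzen for small $p$'' is only a placeholder.

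The outline also leaves the second half of (ii) open. You must show that $0\to L(0,0)\to H^1(G_1,L(1,0))\to\nabla(1,0)^{\rm F}\to 0$ is non-split. By the five-term sequence this is equivalent to $\Ext^1_G(L(0,0),L(3,0))=0$, which the paper derives from (ii), so it is not free. ``Serre-type duality'' does not give it either: Tate duality for $G_1$ pairs $H^1$ with $\hat H^{-2}$, not with $H^1$.

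The justifications you offer for (i) and (iv) are also wrong. Linkage can never exclude self-extensions, since the relevant difference of weights is $0$. What you need is the general vanishing of $\Ext^1_{G_1}(L(\lambda),L(\lambda))$ that you cite from \cite{J1}. Its only exceptions are $p=2$ with a component of type $C_n$, so it covers $G_2$ directly and no separate check is needed.

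For (iv), your own Step~3 shows $(1,0),(0,1)\in W\cdot 0+2X(T)$, so linkage places no obstruction on $\Ext^1_{G_1}(L(1,0),L(0,1))$. Indeed, $L(1,0)\otimes L(0,1)$ has character $\chi(2,0)+\chi(1,0)+\chi(1,1)-\chi(0,1)$. Its composition factors are $\St$, $L(2,0)$, and $L(1,0)$ and $L(0,0)$ each twice. So the claim that it has no composition factors in the right coset is false, and the vanishing $H^1(G_1,L(1,0)\otimes L(0,1))=0$ is a genuine computation.

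As written, the only sound part of the proposal is the citation.
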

We note that $T(1,0)$ appears as the unique extension 
$$0\to \nabla(0)\to T(1,0)\to \nabla(1,0)\to 0$$
We now proceed with the $G_1$-cohomology of certain induced $G$-modules.

\begin{Lemma}\label{lem.coh.ind}
 The following hold:
\begin{enumerate}[label = (\roman*), font=\normalfont]
\item $H^1(G_1, \nabla(1,0))=\nabla(1,0)^{\rm{F}}$;
\item $H^1(G_1, \nabla(2,0))=0$.
\end{enumerate}
\end{Lemma}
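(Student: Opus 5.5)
For (i) I would use the structure of $\nabla(1,0)$ in characteristic $2$. Since $L(1,0)$ has dimension $6$ and $\nabla(1,0)$ has dimension $7$, there is a short exact sequence
$$0\to L(1,0)\to \nabla(1,0)\to L(0,0)\to 0.$$
I take the $G_1$-cohomology long exact sequence of this, all of whose terms are $G$-modules. First, $\nabla(1,0)^{G_1}=0$: its socle $L(1,0)$ is a nontrivial restricted simple module, hence nontrivial as a $G_1$-module. Second, $H^1(G_1,{\mathbb F})=0$ by Lemma~\ref{lem.coh.simples}(i). These give an exact sequence of $G$-modules
$$0\to {\mathbb F}\to H^1(G_1,L(1,0))\to H^1(G_1,\nabla(1,0))\to 0.$$
By Lemma~\ref{lem.coh.simples}(ii) the middle term is $T(1,0)^F$. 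The image of ${\mathbb F}$ is a trivial $G$-submodule, so it lies in $\soc_G T(1,0)^F=\nabla(0)^F$. Hence the quotient is $\nabla(1,0)^F$, proving (i).

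For (ii) I would filter $\nabla(2,0)$ using its $G_1$-fixed points. Since $\nabla(2,0)^{G_1}=\nabla(1,0)^F$, there is an exact sequence
$$0\to \nabla(1,0)^F\to \nabla(2,0)\to M\to 0,$$
where $M=\nabla(2,0)/\nabla(1,0)^F$. A character computation (Brauer's formula, or known composition multiplicities) should show that $M$ has dimension $20$ with composition factors $L(0,1)$ and $L(1,0)$. I then apply $G_1$-cohomology:
\begin{enumerate}[label = (\alph*), font=\normalfont]
\item $H^1(G_1,\nabla(1,0)^F)=H^1(G_1,{\mathbb F})\otimes\nabla(1,0)^F=0$ by Lemma~\ref{lem.coh.simples}(i);
\item $\nabla(2,0)^{G_1}\to M^{G_1}$ is zero (and $M^{G_1}$ itself should vanish, as no composition factor of $M$ is trivial).
\end{enumerate}
Together these give an injection
$$H^1(G_1,\nabla(2,0))\hookrightarrow H^1(G_1,M)\xrightarrow{\ \partial\ } H^2(G_1,{\mathbb F})\otimes \nabla(1,0)^F.$$

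It then suffices to show that $\partial$ is injective, or more simply that $H^1(G_1,M)=0$. To get at this I would first determine the $G$-structure of $M$. Lemma~\ref{lem:bootstrap} with $X=\nabla(1,0)^F$ gives
$$\Hom_G(L(\mu),M)=\Ext^1_G(L(\mu),\nabla(1,0)^F),$$
and the right-hand side can be computed with the five-term exact sequence. I expect this to show that $M$ is a nonsplit extension with socle $L(0,1)$ and head $L(1,0)$, or the reverse. In the first case, the connecting map
$$\Hom_{G_1}(L(0,1),M)\to \cdots \to H^1(G_1,L(0,1))=\nabla(1,0)^F$$
and the term $H^1(G_1,L(1,0))=T(1,0)^F$ combine with Lemma~\ref{lem.coh.simples}(iv), which rules out cross extensions at the $G_1$-level. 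I would then try to conclude that the potential contributions $\nabla(1,0)^F$ and $T(1,0)^F$ either cancel against connecting maps or are killed by $\partial$.

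The main obstacle is this final step: controlling $H^1(G_1,M)$ and the map $\partial$, which requires knowledge of $H^2(G_1,{\mathbb F})$ for $G_2$ in characteristic $2$. I would first try to avoid $H^2$ entirely by proving $H^1(G_1,M)=0$ directly from the structure of $M$. Failing that, I would compare $G$-fixed points of the terms, using that $H^1(G_1,\nabla(2,0))^G$ sits in the five-term sequence with $H^1(G,\nabla(2,0))=0$ and $H^2(G/G_1,\nabla(1,0)^F)=H^2(G,\nabla(1,0))=0$.
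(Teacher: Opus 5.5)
Part (i) is correct and is the paper's argument. You also supply the socle argument identifying the quotient of $T(1,0)^F$ by the image of $\mathbb F$, which the paper leaves implicit.

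In (ii), your reduction $H^1(G_1,\nabla(2,0))\hookrightarrow H^1(G_1,M)$, with $M=\nabla(2,0)/\nabla(1,0)^F$, is also what the paper does. Your plan for finishing, however, has a genuine gap.

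First, your expectation that $M$ is a non-split extension is wrong. Lemma~\ref{lem.coh.simples}(iv) and the five-term sequence give $\Ext^1_G(L(1,0),L(0,1))=0$, so $M\cong L(1,0)\oplus L(0,1)$. Your own bootstrap computation would show this, since $\Ext^1_G(L(\mu),\nabla(1,0)^F)\neq 0$ for both $\mu=(1,0)$ and $\mu=(0,1)$. Consequently $H^1(G_1,M)\cong T(1,0)^F\oplus\nabla(1,0)^F\neq 0$. So the ``simpler'' route $H^1(G_1,M)=0$ is false, and proving $\partial$ injective would need knowledge of $H^2(G_1,\mathbb F)$ that you do not have.

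Your fallback, $H^1(G_1,\nabla(2,0))^G=0$ via the five-term sequence, is correct and is half of the paper's argument. But it only excludes $L(0,0)$ from the $G$-socle of $H^1(G_1,\nabla(2,0))$. The $G$-socle of $T(1,0)^F\oplus\nabla(1,0)^F$ is $L(0,0)\oplus L(2,0)$. Fixed points cannot rule out, for instance, a copy of $\nabla(1,0)^F$ (which has no $G$-fixed points) inside $H^1(G_1,\nabla(2,0))$.

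The missing step is to exclude $L(2,0)=L(1,0)^F$ from the socle as well. The paper does this by noting that such a simple submodule would give a nonzero element of
$$\Hom_G(\Delta(1,0)^F,H^1(G_1,\nabla(2,0)))=H^1(G_1,\nabla(2,0)\otimes\nabla(1,0)^F)^G.$$
It then applies the five-term sequence to $V=\nabla(2,0)\otimes\nabla(1,0)^F$. Since $V^{G_1}=(\nabla(1,0)\otimes\nabla(1,0))^F$ has vanishing higher $G/G_1$-cohomology, this group equals $H^1(G,V)=\Ext^1_G(\Delta(1,0)^F,\nabla(2,0))$. That is zero because the composition factors $L(2,0)$ and $L(0,0)$ of $\Delta(1,0)^F$ are not greater than $(2,0)$. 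This two-step socle test avoids $H^2(G_1,\mathbb F)$ entirely.
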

\begin{proof}
(i) We consider the short exact sequence 
$$0\rightarrow L(1,0)\rightarrow \nabla(1,0)\rightarrow L(0,0)\rightarrow0$$
We have $\nabla(1,0)^G=0$ and by Lemma~\ref{lem.coh.simples}  $H^1(G_1,L(0,0))=0$. Thus, from the long exact sequence of $G_1$-cohomology we obtain the exact sequence
$$0\to L(0,0)\to H^1(G_1,L(1,0))\to H^1(G_1,\nabla(1,0))\to 0.$$
Moreover, by Lemma~\ref{lem.coh.simples} we have $H^1(G_1,L(1,0))=T(1,0)^F$ and the result follows.

(ii) The quotient $\nabla(2,0)/\nabla(1,0)^F$ has composition factors $L(1,0),L(0,1)$ and hence $\nabla(2,0)/\nabla(1,0)^F=L(1,0)\oplus L(0,1)$. The short exact sequence 
$$0\rightarrow \nabla(1,0)^F\rightarrow \nabla(2,0)\rightarrow L(1,0)\oplus L(0,1)\rightarrow 0$$
gives rise to the exact sequence
$$H^1(G_1,\nabla(1,0)^F)\to H^1(G_1,\nabla(2,0)) \to H^1(G_1,L(1,0))\oplus H^1(G_1,L(0,1))$$
By Lemma~\ref{lem.coh.simples} we have $H^1(G_1,\nabla(1,0)^F)=0$ and so $H^1(G_1,\nabla(2,0))$ embeds in 
$$H^1(G_1,L(1,0))\oplus H^1(G_1,L(0,1))=T(1,0)^F\oplus \nabla(1,0)^F,$$ which has $G$-socle $L(0,0)^F\oplus L(1,0)^F$.  If $H^1(G_1,\nabla(2,0))\neq 0$, then we either have $H^1(G_1,\nabla(2,0))^G\neq 0$ or 
$(H^1(G_1,\nabla(2,0))\otimes \nabla(1,0)^F)^G\neq 0$.   Now, $H^1(G_1,\nabla(2,0))^G=0$ by the $5$-term exact sequence (and the acyclicity of $\nabla(2,0)$ as a $G$-module). 
It remains to show that   $(H^1(G_1,\nabla(2,0))\otimes \nabla(1,0)^F)^G=H^1(G_1,\nabla(2,0)\otimes \nabla(1,0)^F)^G$ is $0$. We have $\nabla(2,0)^{G_1}=\nabla(1,0)^F$ and 
 $$H^i(G/G_1,\nabla(1,0)^F\otimes \nabla(1,0)^F)=H^i(G,\nabla(1,0)\otimes \nabla(1,0))=0$$
for all $i>0$. Thus, by the $5$-term exact sequence we obtain that
$$H^1(G_1,\nabla(2,0))\otimes \nabla(1,0)^F)^G=H^1(G,\nabla(2,0)\otimes \nabla(1,0)^F)$$
Now, $\nabla(1,0)^F$ has composition factors $L(2,0)$ and $L(0,0)$ and so  by \cite[II, 4.13, Remark (1))]{J1} we deduce that $H^1(G,\nabla(2,0)\otimes \nabla(1,0)^F)=0$.
\end{proof}

\section{Some Extensions between irreducible $G$-modules}\label{sec:ext.irr}

Here, we calculate the groups $\Ext^1_G(L(\lambda),L(\mu))$, for $\lambda,\mu\in X^+(T)$ with $\lambda,\mu\leq (2,1)$, using the results of Section~\ref{sec:coh.frob}. We first record a lemma on good filtrations.
\begin{Lemma}\label{lem.good.filtrations}
The following hold: 
\begin{enumerate}[label=(\roman*),font=\normalfont]
\item$\nabla(1,0)\otimes \nabla(1,0)$ has a good filtration and its character is $$\chi(2,0)+\chi(0,1)+\chi(1,0)+\chi(0,0);$$
\item $\nabla(1,0)\otimes L(0,1)$ has a good filtration and  its character is $\chi(2,0)+\chi(1,0)+\chi(1,1)$;
\item$\nabla(1,0)\otimes L(1,0)$ has a good filtration and  its character is $\chi(2,0)+\chi(0,1)+\chi(0,0)$.
\end{enumerate}
\end{Lemma}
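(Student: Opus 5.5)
Parts (i) and (ii) come from the tensor product theorem for good filtrations, (iii) from Lemma~\ref{lem:wei.filt}, and all three characters from Brauer's formula in the form recorded in Section~\ref{sec:prel}. Write $\lambda=(1,0)$ and $\rho=(1,1)$.

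\emph{Weights.} The highest short root is $2\alpha+\beta=(1,0)$, and the short roots are $\pm(1,0),\pm(2,-1),\pm(-1,1)$. So the weights of $\nabla(1,0)$ are these six together with $0$, each of multiplicity one. The long roots are $\pm(0,1)$, $\pm(3,-1)$, $\pm(-3,2)$. In characteristic $2$ the module $L(1,0)$ has dimension $6$, with weights exactly the short roots. This follows because $\nabla(1,0)/L(1,0)\cong L(0,0)$, as used in Lemma~\ref{lem.coh.ind}.

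\emph{The module $L(0,1)$.} By the proof of Lemma~\ref{lem.coh.ind}(ii), $\nabla(2,0)/\nabla(1,0)^F\cong L(1,0)\oplus L(0,1)$. Comparing dimensions gives $27-7=6+\dim L(0,1)$, so $\dim L(0,1)=14=\dim\nabla(0,1)$. Hence $L(0,1)=\nabla(0,1)$, the adjoint-type module, with weights all twelve roots and the zero weight with multiplicity $2$.

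\emph{Good filtrations.} For (i) and (ii), $\nabla(1,0)\otimes\nabla(1,0)$ and $\nabla(1,0)\otimes\nabla(0,1)$ are tensor products of induced modules. They therefore have good filtrations by \cite[II, 4.21 Proposition]{J1}. For (iii) we apply Lemma~\ref{lem:wei.filt} with $V=L(1,0)$. For each short root $\eta=(a,b)$ we have $a\ge -2$ and $b\ge -1$, so $\lambda+\eta+\rho=(2+a,1+b)$ is dominant. Hence $\nabla(1,0)\otimes L(1,0)$ has a good filtration.

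\emph{Characters.} By Brauer's formula, ${\rm ch}(\nabla(\lambda)\otimes V)=\sum_\eta \dim V^\eta\,\chi(\lambda+\eta)$. Any term with $\lambda+\eta+\rho$ on a wall vanishes, and the identity $\chi(w\cdot\mu)={\rm sgn}(w)\chi(\mu)$ handles the rest.

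For (iii) the terms are as follows.
\begin{itemize}
\item $\eta=(1,0)$ gives $\chi(2,0)$, and $\eta=(-1,1)$ gives $\chi(0,1)$.
\item $\eta=(-1,0)$ gives $\chi(0,0)$.
\item $\eta=(1,-1),(-2,1),(2,-1)$ give $\lambda+\eta+\rho=(3,0),(0,2),(4,0)$. Each lies on a wall, so these three characters vanish.
\end{itemize}
This yields $\chi(2,0)+\chi(0,1)+\chi(0,0)$. Part (i) adds the single zero-weight term $\chi(1,0)$.

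For (ii) we sum $\chi((1,0)+\eta)$ over the roots $\eta$, plus $2\chi(1,0)$ from the zero weight. Several terms must be moved by the dot action. For example, $\eta=(-3,1)$ gives $\lambda+\eta+\rho=(-1,2)$; reflecting gives a term of sign $-1$, which must cancel against one of the zero-weight copies of $\chi(1,0)$. The main care point is this bookkeeping of reflections and signs in (ii), which should collapse to $\chi(2,0)+\chi(1,0)+\chi(1,1)$.

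As a final check, compare dimensions: for (ii), $7\cdot 14=98=27+7+64$, and similarly for (i) and (iii).
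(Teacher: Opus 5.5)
Your route is the paper's: it also gets the good filtrations in (i) and (ii) from the fact that $\nabla(1,0)$ and $L(0,1)=\nabla(0,1)$ are induced modules, reads the sections off Brauer's formula, and deduces (iii) from Lemma~\ref{lem:wei.filt}. Your check of that lemma's hypothesis and your computations for (i) and (iii) are correct.

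The one error is the sample cancellation you give for (ii). For $\eta=(-3,1)$ you have $(1,0)+\eta+\rho=(-1,2)$, and $s_\alpha$ sends this to $(1,1)$. So the term is $\chi(-2,1)=-\chi(0,0)$. It cancels the $\chi(0,0)$ coming from $\eta=(-1,0)$, not a zero-weight copy of $\chi(1,0)$.

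Since you left (ii) as ``should collapse'', here is the complete bookkeeping.
\begin{itemize}
\item The roots $(1,0),(-1,0),(-1,1),(0,1)$ give $\chi(2,0),\chi(0,0),\chi(0,1),\chi(1,1)$.
\item The roots $(2,-1),(-2,1),(1,-1),(0,-1),(3,-1)$ put $(1,0)+\eta+\rho$ on a wall, namely at $(4,0),(0,2),(3,0),(2,0),(5,0)$. They contribute $0$.
\item The roots $(-3,1),(-3,2),(3,-2)$ give $(1,0)+\eta+\rho=(-1,2),(-1,3),(5,-1)$. The reflections $s_\alpha,s_\alpha,s_\beta$ move these to $(1,1),(1,2),(2,1)$. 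They contribute $-\chi(0,0),-\chi(0,1),-\chi(1,0)$.
\item The zero weight gives $2\chi(1,0)$.
\end{itemize}
The negative terms cancel $\chi(0,0)$, $\chi(0,1)$ and one copy of $\chi(1,0)$, leaving $\chi(2,0)+\chi(1,0)+\chi(1,1)$.

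With this correction your argument is complete. The dimension count $98=27+7+64$ is only a consistency check; it does not replace the computation.
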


\begin{proof}
Part (i) and part (ii) are clear and the sections in the filtration follow directly from Brauer's formula. Part (iii) follows from Lemma~\ref{lem:wei.filt}.
\end{proof}

\begin{Lemma}\label{lem.ext.simples}
For $\lambda,\mu\in X^+(T)$ with $\lambda,\mu\leq (2,1)$, the dimensions of the extension groups $\Ext^1_G(L(\lambda),L(\mu))$
are given in the following table:
\[
\renewcommand{\arraystretch}{1.15}
\begin{array}{c|ccccccc}
& (0,0) & (1,0) & (0,1) & (2,0) & (3,0) & (0,2) & (2,1)
\\
\hline
(0,0) &0&1&0&1&0&0&1\\
(1,0) &1&0&0&0&1&0&0\\
(0,1) &0&0&0&1&0&0&1\\
(2,0) &1&0&1&0&1&0&0\\
(3,0) &0&1&0&1&0&1&0\\
(0,2) &0&0&0&0&1&0&1\\
(2,1) &1&0&1&0&0&1&0
\end{array}
\]
\end{Lemma}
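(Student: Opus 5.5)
We reduce the table to a small number of independent computations. Since $\Ext^1_G(L(\lambda),L(\mu))\cong\Ext^1_G(L(\mu),L(\lambda))$ (apply the contravariant duality, using $-w_0=1$ for $G_2$, so every $L(\lambda)$ is self-dual), the table is symmetric and we only compute the entries with $\lambda\le\mu$ or $\lambda,\mu$ incomparable. We also use $\Ext^1_G(L(\lambda),L(\lambda))=0$, which holds for reductive groups. The weights involved are $(0,0),(1,0),(0,1),(2,0),(3,0),(0,2),(2,1)$. The restricted ones are $(0,0),(1,0),(0,1)$. The remaining ones factor by Steinberg's tensor product theorem: $L(2,0)=L(1,0)^F$, $L(0,2)=L(0,1)^F$, $L(3,0)=L(1,0)\otimes L(1,0)^F$ and $L(2,1)=L(0,1)\otimes L(1,0)^F$.

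The main tool is the $5$-term exact sequence applied to $V=L(\lambda)^*\otimes L(\mu)$. Write $L(\lambda)=L(\lambda_0)\otimes L(\lambda_1)^F$ and $L(\mu)=L(\mu_0)\otimes L(\mu_1)^F$ with $\lambda_0,\mu_0$ restricted. Then $V^{G_1}$ is nonzero only if $\lambda_0=\mu_0$, in which case it equals $(L(\lambda_1)\otimes L(\mu_1))^F$. Moreover
\[H^1(G_1,V)=\bigl(L(\lambda_1)\otimes L(\mu_1)\bigr)^F\otimes H^1\bigl(G_1,L(\lambda_0)\otimes L(\mu_0)\bigr),\]
and the last factor is supplied by Lemma~\ref{lem.coh.simples}. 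For $\lambda_0=\mu_0$ Lemma~\ref{lem.coh.simples}(i) kills $H^1(G_1,V)$, so $\Ext^1_G(L(\lambda),L(\mu))=\Ext^1_{G}(L(\lambda_1),L(\mu_1))$. This settles the entries $(0,0)$–$(2,0)$ and $(1,0)$–$(3,0)$, giving $\Ext^1_G(L(0),L(1,0))$, and $(0,0)$–$(0,2)$, giving $\Ext^1_G(L(0),L(0,1))$. These reduce to restricted entries. For $\lambda_0\ne\mu_0$ we have $V^{G_1}=0$, so
\[\Ext^1_G(L(\lambda),L(\mu))=\Hom_G\bigl(L(\lambda_1)\otimes L(\mu_1),\,H^1(G_1,L(\lambda_0)\otimes L(\mu_0))^{F^{-1}}\bigr).\]
Here we use $H^1(G_1,L(1,0))=T(1,0)^F$, $H^1(G_1,L(0,1))=\nabla(1,0)^F$ and $H^1(G_1,L(1,0)\otimes L(0,1))=0$. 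Each entry then becomes a Hom computation into $T(1,0)$ or $\nabla(1,0)$ from $L(0)$, $L(1,0)$, $L(0,1)$, or a product such as $L(1,0)\otimes L(1,0)$. These are read off from socles: $\soc T(1,0)=\soc\nabla(1,0)=L(1,0)$, with $T(1,0)$ uniserial $L(1,0),L(0),L(1,0)$. For products one uses adjunction, for example $\Hom_G(L(1,0)\otimes L(1,0),T(1,0))=\Hom_G(L(1,0),L(1,0)\otimes T(1,0))$, together with Lemma~\ref{lem.good.filtrations}.

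The restricted entries themselves come from Lemma~\ref{lem:bootstrap}. Take $\lambda=(1,0)$ and $X=L(1,0)$, so $\nabla(1,0)/L(1,0)=L(0)$; this gives $\Ext^1_G(L(0),L(1,0))=\mathbb F$. For the pair $(0,1),(1,0)$ and the pair $(0,0),(0,1)$, we need the structure of $\nabla(0,1)$. Its character $\chi(0,1)$ has dimension $14$ and the relevant composition factors are $L(0,1)$ and $L(2,0)$ (with $\dim L(0,1)=14$ in characteristic $2$ we get $\nabla(0,1)=L(0,1)$). So Lemma~\ref{lem:bootstrap} gives zero for both.

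I expect the main obstacle to be the entries involving $(2,1)$ and $(3,0)$ against non-matching restricted parts, in particular $(2,1)$–$(0,2)$, $(3,0)$–$(0,2)$ and $(2,1)$–$(0,0)$. These require the precise $G$-module structure of $L(1,0)\otimes T(1,0)$ and $L(1,0)\otimes\nabla(1,0)$ to count homomorphisms from $L(0,1)$ or $L(1,0)$. I would handle them via the good filtrations in Lemma~\ref{lem.good.filtrations}(ii),(iii) and Lemma~\ref{lem.coh.ind}, checking the socle of each factor with Lemma~\ref{lem:bootstrap} applied to $\nabla(2,0)\supset\nabla(1,0)^F$.
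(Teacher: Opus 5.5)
Your framework is the paper's: symmetry, no self-extensions, Steinberg's tensor product theorem, and the $5$-term exact sequence fed by Lemma~\ref{lem.coh.simples}. Your two reduction formulas, for $\lambda_0=\mu_0$ and for $\lambda_0\neq\mu_0$, are correct.

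\textbf{The gap.} The step that fails is the module structure you use to evaluate the formula. $T(1,0)$ is not uniserial with layers $L(1,0),L(0),L(1,0)$, and its socle is not $L(1,0)$. Its character is $\chi(1,0)+\chi(0,0)$, so its composition factors are $L(1,0)$ once and $L(0)$ twice. The non-split sequence $0\to\nabla(0)\to T(1,0)\to\nabla(1,0)\to 0$ shows it is uniserial, with $L(0)$ as socle and head and $L(1,0)$ in the middle.

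This is not cosmetic. Your formula turns the entries $(0,0)$--$(3,0)$ and $(1,0)$--$(2,0)$ into $\Hom_G(L(1,0),T(1,0))$. With your socle these would come out as $1$, not the table's $0$. Applied to $(0,0)$--$(1,0)$, your formula would also give $\Hom_G(L(0),T(1,0))=0$, contradicting your own value of $1$ obtained via Lemma~\ref{lem:bootstrap}. With the correct structure, $\Hom_G(L(1,0),T(1,0))=0$ and $\Hom_G(L(0),T(1,0))=\mathbb F$, as required.

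\textbf{The product entries.} The same corrected sequence is what settles the two product entries, which you left vague. Tensoring it with $L(1,0)$ gives $0\to L(1,0)\to L(1,0)\otimes T(1,0)\to L(1,0)\otimes\nabla(1,0)\to 0$.

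For $(2,0)$--$(3,0)$: even $\Hom_G(\Delta(1,0),L(1,0)\otimes\nabla(1,0))=0$, since there is no section $\nabla(1,0)$ (Lemma~\ref{lem.good.filtrations}(iii)). Hence $\Hom_G(L(1,0),L(1,0)\otimes T(1,0))=\Hom_G(L(1,0),L(1,0))=\mathbb F$.

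For $(3,0)$--$(0,2)$: since $\Hom_G(L(0,1),L(1,0))=\Ext^1_G(L(0,1),L(1,0))=0$, you get $\Hom_G(L(0,1),L(1,0)\otimes T(1,0))\cong\Hom_G(L(1,0),L(0,1)\otimes\nabla(1,0))=\mathbb F$ by Lemma~\ref{lem.good.filtrations}(ii).

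This is exactly how the paper argues.

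\textbf{A smaller point.} $L(2,0)$ cannot be a composition factor of $\nabla(0,1)$, since $(2,0)=(0,1)+\alpha$. The detour through $\dim L(0,1)=14$ is also unnecessary. Your own formula with Lemma~\ref{lem.coh.simples}(iii),(iv) gives $\Ext^1_G(L(0),L(0,1))=\Hom_G(L(0),\nabla(1,0))=0$ and $\Ext^1_G(L(1,0),L(0,1))=0$ directly.
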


\begin{proof}
From \cite[II, 2.12, (1)]{J1} we have that irreducible $G$-modules admit no nontrivial self-extensions and so the diagonal entries are zero. Moreover, from \cite[II, 2.12,(4)]{J1} (or the self  duality of simple modules) we have that
\[
\Ext^1_G(L(\lambda),L(\mu))
=
\Ext^1_G(L(\mu),L(\lambda)),
\]
so it suffices to consider the case $\mu>\lambda$. We use the 5-term exact sequence and Lemma~\ref{lem.coh.simples} and we have the following:

\noindent\textit{Case 1: $\lambda=(0,0)$.} We have
\[
\begin{aligned}
\Ext^1_G(L(0,0),L(1,0)) &= H^1(G,L(1,0)) = H^0(G,T(1,0)) = \mathbb F,\\
\Ext^1_G(L(0,0),L(0,1)) &= H^1(G,L(0,1)) = H^0(G,\nabla(1,0)) = 0,\\
\Ext^1_G(L(0,0),L(2,0)) &= H^1(G,L(1,0)^{\mathrm F}) = H^1(G,L(1,0)) = \mathbb F,\\
\Ext^1_G(L(0,0),L(3,0)) &= H^1(G,L(1,0)\otimes L(1,0)^{\mathrm F})
= H^0(G,T(1,0)\otimes L(1,0)) = 0,\\
\Ext^1_G(L(0,0),L(0,2)) &= H^1(G,L(0,1)^{\mathrm F}) = H^1(G,L(0,1)) = 0,\\
\Ext^1_G(L(0,0),L(2,1)) &= H^1(G,L(0,1)\otimes L(1,0)^{\mathrm F})
= H^0(G,\nabla(1,0)\otimes L(1,0)) = \mathbb F.
\end{aligned}
\]

\medskip

\noindent\textit{Case 2: $\lambda=(1,0)$.} We have
\[
\begin{aligned}
\Ext^1_G(L(1,0),L(0,1))
&= H^1(G,L(1,0)\otimes L(0,1)) = 0,\\
\Ext^1_G(L(1,0),L(2,0))
&= H^1(G,L(1,0)\otimes L(1,0)^{\mathrm F})
= H^0(G,T(1,0)\otimes L(1,0)) = 0,\\
\Ext^1_G(L(1,0),L(3,0))
&= H^1(G,L(1,0)\otimes L(1,0)\otimes L(1,0)^{\mathrm F})
= H^1(G,L(1,0))
= \mathbb F,\\
\Ext^1_G(L(1,0),L(0,2))
&= H^1(G,L(1,0)\otimes L(0,1)^{\mathrm F})
= H^0(G,T(1,0)\otimes L(0,1)) = 0,\\
\Ext^1_G(L(1,0),L(2,1))
&= H^1(G,L(1,0)\otimes L(0,1)\otimes L(1,0)^{\mathrm F}) = 0.
\end{aligned}
\]

\medskip

\noindent\textit{Case 3: $\lambda=(0,1)$.} We have
\[
\begin{aligned}
\Ext^1_G(L(0,1),L(2,0))
&= H^1(G,L(0,1)\otimes L(1,0)^{\mathrm F})
=H^0(G, \nabla(1,0)\otimes L(1,0))=\mathbb F,\\
\Ext^1_G(L(0,1),L(3,0))
&= H^1(G,L(0,1)\otimes L(1,0)\otimes L(1,0)^{\mathrm F})
 = 0,\\
\Ext^1_G(L(0,1),L(0,2))
&= H^1(G,L(0,1)\otimes L(0,1)^{\mathrm F})
=H^0(G, \nabla(1,0)\otimes L(0,1))=0,\\
\Ext^1_G(L(0,1),L(2,1))
&= H^1(G,L(0,1)\otimes L(0,1) \otimes L(1,0)^{\mathrm F})
= H^1(G,L(1,0)) = \mathbb F.
\end{aligned}
\]

\medskip
\noindent\textit{Case 4: $\lambda=(2,0)$.} We have 
\[
\begin{aligned}
\Ext^1_G(L(2,0),L(3,0))
&= H^1(G,L(1,0)\otimes L(1,0)^{\mathrm F}\otimes L(1,0)^{\mathrm F})\\
&= H^0(G,T(1,0)\otimes L(1,0)\otimes L(1,0))
\end{aligned}
\]
Now consider the short exact sequence
\[0\to L(1,0)\to L(1,0)\otimes T(1,0)\to L(1,0)\otimes \nabla(1,0)\to 0\]
By Lemma~\ref{lem.good.filtrations}(iii), we have that $H^0(G,\nabla(1,0)\otimes L(1,0)\otimes L(1,0))=0$ and so
\[\begin{aligned}
\Ext^1_G(L(2,0),L(3,0))
&=H^0(G,T(1,0)\otimes L(1,0)\otimes L(1,0))\\
&=H^0(G,L(1,0)\otimes L(1,0))=\mathbb F,\\
\Ext^1_G(L(2,0),L(0,2))
&= H^1(G,L(1,0)^{\mathrm F}\otimes L(0,1)^{\mathrm F})
= H^1(G,L(1,0)\otimes L(0,1))
=0, \\
\Ext^1_G(L(2,0),L(2,1))
&= H^1(G,L(0,1)\otimes L(1,0)^{\mathrm F}\otimes L(1,0)^{\mathrm F})\\
&= H^0(G,\nabla(1,0)\otimes L(0,1)\otimes L(0,1))
=0,
\end{aligned}
\]
where the last equality follows from Lemma~\ref{lem.good.filtrations}(ii).

\medskip

\noindent\textit{Case 5: $\lambda=(3,0)$.} We have 
\[\begin{aligned}
\Ext^1_G(L(3,0),L(0,2))
&= H^1(G,L(1,0)\otimes L(1,0)^{\mathrm F}\otimes L(0,1)^{\mathrm F})\\
&= H^0(G,T(1,0)\otimes L(1,0)\otimes L(0,1))
\end{aligned}\]
Now, consider the short exact sequence
\[0\to L(0,1)\to L(0,1)\otimes T(1,0)\to L(0,1)\otimes \nabla(1,0)\to 0\]
Since $\Ext^1_G(L(1,0),L(0,1))=0$, we obtain
\[\begin{aligned}
\Ext^1_G(L(3,0),L(0,2))
&= H^0(G,T(1,0)\otimes L(1,0)\otimes L(0,1))\\
&= H^0(G,\nabla(1,0)\otimes L(1,0)\otimes L(0,1))=\mathbb F,
\end{aligned}\]
where the last equality follows from Lemma~\ref{lem.good.filtrations}(ii).
\[\begin{aligned}
\Ext^1_G(L(3,0),L(2,1))
&=H^1(G,L(1,0)\otimes L(0,1)\otimes L(1,0)^{\mathrm F}\otimes L(1,0)^{\mathrm F})
=0.
\end{aligned}\]

\medskip
\noindent\textit{Case 6: $\lambda=(0,2)$.} We have 
\[
\begin{aligned}
\Ext^1_G(L(0,2),L(2,1))
&= H^1(G,L(0,1)\otimes L(1,0)^{\mathrm F}\otimes L(0,1)^{\mathrm F})\\
&= H^0(G,\nabla(1,0)\otimes L(1,0)\otimes L(0,1))
= \mathbb F.
\end{aligned}
\]
where the last equality follows from Lemma~\ref{lem.good.filtrations}(ii).
\end{proof}

\section{The submodule structure of certain induced $G$-modules}\label{sec:sub.in}
In this section we construct filtrations for the induced $G$-modules $\nabla(0,2)$ and $\nabla(2,1)$. The following calculation will prove useful in our considerations.
\begin{Lemma}\label{lem.gcoh}
We have 
$$H^1(G,L(0,1)\otimes \nabla(1,0)\otimes \nabla(1,0)^{\mathrm F})=0.$$
\end{Lemma}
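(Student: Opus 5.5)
The plan is to reduce everything, via the $5$-term exact sequence, to the vanishing of a $G_1$-cohomology group. Put $M=L(0,1)\otimes\nabla(1,0)$ and $V=M\otimes\nabla(1,0)^{\mathrm F}$. Then $V^{G_1}=M^{G_1}\otimes\nabla(1,0)^{\mathrm F}$ and $H^1(G_1,V)=H^1(G_1,M)\otimes\nabla(1,0)^{\mathrm F}$. So, by the $5$-term exact sequence, it suffices to prove
\[
M^{G_1}=0 \quad\text{and}\quad H^1(G_1,M)=0 .
\]

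\emph{Step 1: $M^{G_1}=0$.} We have $M^{G_1}=\Hom_{G_1}(L(0,1),\nabla(1,0))$, using that $L(0,1)$ is self dual. The $G_1$-composition factors of $\nabla(1,0)$ are $L(1,0)$ and $L(0,0)$. Neither is isomorphic to $L(0,1)$, so this Hom space is zero.

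\emph{Step 2: split off the Steinberg part.} By Lemma~\ref{lem.good.filtrations}(ii), $M$ has a good filtration with sections $\nabla(2,0)$, $\nabla(1,0)$ and $\nabla(1,1)=\St$. The Steinberg module lies alone in its linkage class among these weights: the weights $(1,0)$ and $(2,0)$ are not of the form $\rho+2\lambda$. Hence $M=\St\oplus M'$ as $G$-modules, where $M'$ has a good filtration with one section $\nabla(1,0)$ and one section $\nabla(2,0)$. Since $\St$ is injective as a $G_1$-module, $H^1(G_1,M)=H^1(G_1,M')$. I then determine the order of the sections. If $\nabla(2,0)$ were a submodule of $M'$, then $\nabla(2,0)^{G_1}=\nabla(1,0)^{\mathrm F}\neq0$ would lie in $M^{G_1}$, contradicting Step 1. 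So we have a short exact sequence
\[
0\to\nabla(1,0)\to M'\to\nabla(2,0)\to 0 .
\]

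\emph{Step 3: the connecting map (the key point).} The long exact sequence of $G_1$-cohomology is an exact sequence of $G$-modules. Using $\nabla(1,0)^{G_1}=0$, $(M')^{G_1}=0$, $\nabla(2,0)^{G_1}=\nabla(1,0)^{\mathrm F}$ and Lemma~\ref{lem.coh.ind}, it reads
\[
0\to\nabla(1,0)^{\mathrm F}\xrightarrow{\ \partial\ }\nabla(1,0)^{\mathrm F}\to H^1(G_1,M')\to H^1(G_1,\nabla(2,0))=0 .
\]
Injectivity of $\partial$ is forced by $(M')^{G_1}=0$. An injective endomorphism of a finite dimensional module is an isomorphism. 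Therefore $H^1(G_1,M')=0$, and hence $H^1(G_1,M)=0$.

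Combining Steps 1 and 3 in the $5$-term exact sequence for $V$ gives
\[
H^1(G/G_1,V^{G_1})=0 \quad\text{and}\quad H^1(G_1,V)^G=0,
\]
so $H^1(G,V)=0$. The main obstacle is Step 3. A cruder argument, which bounds $H^1(G_1,M)$ only by $H^1(G_1,L(0,1))=\nabla(1,0)^{\mathrm F}$, does not suffice: it leaves the possibly nonzero invariant space $(\nabla(1,0)\otimes\nabla(1,0))^G$. The vanishing of $M^{G_1}$ is exactly what forces the connecting map to be an isomorphism.
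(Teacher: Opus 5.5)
Your proof is correct and follows the paper's own argument. Both split off the Steinberg summand of $L(0,1)\otimes\nabla(1,0)$ and use the sequence $0\to\nabla(1,0)\to M'\to\nabla(2,0)\to 0$ with Lemma~\ref{lem.coh.ind}; this shows that the connecting map $\nabla(1,0)^{\mathrm F}\to\nabla(1,0)^{\mathrm F}$ is an isomorphism, hence $(M')^{G_1}=H^1(G_1,M')=0$, and the $5$-term exact sequence then finishes the proof. You also correctly justify two points the paper leaves implicit: the vanishing of $M^{G_1}$, and that $\nabla(1,0)$, not $\nabla(2,0)$, is the bottom section of the good filtration.
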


\begin{proof}
From Lemma~\ref{lem.good.filtrations}(ii) the $G$-module  $L(0,1)\otimes \nabla(1,0)$ has a good filtration with sections $\nabla(2,0)$, $\nabla(1,0)$ and ${\rm St}$. Therefore we have that 
$$H^1(G,L(0,1)\otimes \nabla(1,0)\otimes L(1,0)^{\mathrm F})=H^1(G,K\otimes L(1,0)^{\mathrm F}),$$ 
where $K$ is the $G$-submodule of $L(0,1)\otimes \nabla(1,0)$ that lies in the principal block (i.e., the block of ${\rm mod}(G)$ containing $L(0)$). Therefore $K$ has a good filtration with sections $\nabla(2,0)$ and $\nabla(1,0)$.  Now, certainly 
$H^0(G_1,K)=0$. We show that $H^1(G_1,K)$  also vanishes. For this, we consider the short exact sequence  of $G$-modules
$$0\rightarrow \nabla(1,0)\rightarrow K \rightarrow \nabla(2,0) \rightarrow 0$$ 
and we obtain the exact sequence 
\begin{align*}
0\longrightarrow H^0(G_1,\nabla(2,0))
  \longrightarrow  H^1(G_1,\nabla(1,0))
   \longrightarrow H^1(G_1,K)
    \longrightarrow H^1(G_1,\nabla(2,0))
    \end{align*}
 Now, recall that  $H^0(G_1,\nabla(2,0))=\nabla(1,0)^F$. Moreover, $H^1(G_1,\nabla(1,0))=\nabla(1,0)^F$ and $H^1(G_1,\nabla(2,0))=0$, by Lemma~\ref{lem.coh.ind}. It follows that $H^1(G_1,K)=0$. Therefore by the 5-term exact sequence we deduce that 
$$H^1(G,L(0,1)\otimes \nabla(1,0)\otimes L(1,0)^{\mathrm F})=H^1(G,K\otimes L(1,0)^{\mathrm F})=0.$$
\end{proof}

\begin{Lemma}\label{lemma:(0,2)}
The induced module  $\nabla(0,2)$ has  the following  $G$-submodule structure:
\[
%\nabla(0,2)\;=\;
\frac{\dfrac{L(0,1)}
{\dfrac{L(2,0)}{L(0,0)}}}
{\dfrac{L(1,0)}
{\dfrac{L(3,0)}{L(0,2)}}}
\;=\;
\frac{\dfrac{L(0,1)}
{\Delta(1,0)^{\mathrm F}}}
{\dfrac{L(1,0)\otimes \nabla(1,0)^{\mathrm F}}
 {L(0,2)}}
\]
\end{Lemma}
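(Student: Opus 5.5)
We determine the composition factors of $\nabla(0,2)$ by characters and then pin down the uniserial structure using socles and $\Ext^1$-groups.

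\textbf{Composition factors.} Since $(0,2)=2(0,1)$, the Frobenius twist $\nabla(0,1)^{\mathrm F}$ embeds in $\nabla(0,2)$ as the $G_1$-fixed points. A dimension count ($\dim\nabla(0,2)=77$, $\dim\nabla(0,1)=14$) and the character identities give the factors
\[
L(0,2),\ L(3,0),\ L(1,0),\ L(0,0),\ L(2,0),\ L(0,1),
\]
each with multiplicity one. Here $L(3,0)=L(1,0)\otimes L(1,0)^{\mathrm F}$, and $\nabla(0,1)$ has factors $L(0,1)$ and $L(1,0)$, the latter because the $G_2$ adjoint module in characteristic $2$ has the short-root submodule.

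\textbf{Socle series from the bottom.} The socle is $L(0,2)$. By Lemma~\ref{lem:bootstrap}, for any submodule $X$ the socle of $\nabla(0,2)/X$ in weights $\le(0,2)$ is governed by $\Ext^1_G(L(\mu),X)$.
\begin{itemize}
\item[(1)] With $X=L(0,2)$, the table of Lemma~\ref{lem.ext.simples} shows that among the remaining factors only $L(3,0)$ has a nonzero $\Ext^1$ with $L(0,2)$. So the second socle layer is $L(3,0)$, giving $X_2$ with layers $L(0,2)$, $L(3,0)$.
\item[(2)] Apply Lemma~\ref{lem:bootstrap} to $X_2$. Candidates from the table are $L(1,0)$ and $L(2,0)$, which extend $L(3,0)$, and $L(0,0)$ needs checking. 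We compute $\Ext^1_G(L(\mu),X_2)$ via the long exact sequence for $0\to L(0,2)\to X_2\to L(3,0)\to 0$.
\begin{itemize}
\item For $\mu=(0,0)$ the connecting map is harmless, so the result is $0$.
\item For $\mu=(2,0)$ we need the connecting map $\Hom(L(2,0),L(3,0))=0\to\cdots$. We have $\Ext^1(L(2,0),L(0,2))=0$ and $\Ext^1(L(2,0),L(3,0))=\mathbb F$. We must show that this class does not lift, i.e.\ that it dies under the map to $\Ext^2(L(2,0),L(0,2))$. This is the delicate point, and I would instead argue via the $G_1$-structure below.
\item For $\mu=(1,0)$, similarly $\Ext^1=\mathbb F$.
\end{itemize}
\end{itemize}

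\textbf{Using $G_1$-structure (the main obstacle).} The cleaner route is the right-hand description. The subspace $\nabla(0,2)^{G_1}=\nabla(0,1)^{\mathrm F}$ has layers $L(1,0)^{\mathrm F}=L(2,0)$ and $L(0,1)^{\mathrm F}=L(0,2)$. But $L(2,0)$ must lie above $L(3,0)$, since $\Ext^1(L(2,0),L(0,2))=0$. So instead consider the $G_1$-socle filtration, whose layers are $L(\lambda)\otimes(\text{twist})$. The $G_1$-socle is $\nabla(0,1)^{\mathrm F}$, and the next $G_1$-socle layer should be $L(1,0)\otimes\nabla(1,0)^{\mathrm F}$, with factors $L(3,0)$ and $L(1,0)$. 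It is computed as $\Hom_{G_1}(L(1,0),\nabla(0,2)/\mathrm{soc}_{G_1})$ through $H^1(G_1,L(1,0)\otimes\nabla(0,1)^{\mathrm F})=T(1,0)^{\mathrm F}\otimes\nabla(0,1)^{\mathrm F}$, by Lemma~\ref{lem.coh.simples}(ii).

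Matching this with the $G$-socle computation forces the order $L(0,2)$, $L(3,0)$, $L(1,0)$, and then $L(0,0)$, $L(2,0)$ (forming $\Delta(1,0)^{\mathrm F}$, whose head is $L(2,0)$ and socle $L(0,0)$), and finally $L(0,1)$ as head. The head is forced by the dual statement that $\Delta(0,2)$ has head $L(0,2)$ together with the contravariant duality $\nabla(0,2)\cong$ the contravariant dual of $\Delta(0,2)$.

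Uniseriality at each step then follows because each intermediate socle is simple. This is verified by Lemma~\ref{lem:bootstrap} together with the $\Ext^1$ table: $\Ext^1(L(0,0),L(1,0))\ne0$, $\Ext^1(L(2,0),L(0,0))\ne0$, $\Ext^1(L(0,1),L(2,0))\ne0$, and the other candidate extensions vanish. The main obstacle is ruling out $L(2,0)$ occurring directly above $L(3,0)$. For this I would use Lemma~\ref{lem.gcoh} and Lemma~\ref{lem.coh.ind} to show the relevant $\Ext^1$ into the submodule vanishes.
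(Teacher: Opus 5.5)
Your proposal has a genuine gap: the decisive step is never proved, and the tools you point to do not supply it. After the layers $L(0,2)$, $L(3,0)$, Lemma~\ref{lem:bootstrap} and the table of Lemma~\ref{lem.ext.simples} only give that the third socle layer lies in $L(1,0)\oplus L(2,0)$. The reason is that $L(2,0)$ extends $L(3,0)$ just as $L(1,0)$ does, so your closing claim that ``the other candidate extensions vanish'' fails exactly here. To exclude $L(2,0)$ you must prove $\Ext^1_G(L(2,0),X_2)=0$, i.e.\ that the connecting map $\Ext^1_G(L(2,0),L(3,0))\to\Ext^2_G(L(2,0),L(0,2))$ is injective. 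You state this criterion backwards: a class that dies there is exactly one that lifts. You also never establish it. Lemma~\ref{lem.gcoh} concerns $L(0,1)\otimes\nabla(1,0)\otimes\nabla(1,0)^{\mathrm F}$ and Lemma~\ref{lem.coh.ind} concerns $\nabla(1,0),\nabla(2,0)$, and you do not indicate how either bears on $\Ext^1_G(L(2,0),X_2)$. The gap is substantive. If you branch on the third layer, the $\Ext^1$-bookkeeping with Lemma~\ref{lem:bootstrap} also permits two other configurations (socle layers listed from the bottom): $L(0,2)\,|\,L(3,0)\,|\,L(2,0)\,|\,L(0,0)\oplus L(0,1)\,|\,L(1,0)$ and $L(0,2)\,|\,L(3,0)\,|\,L(1,0)\oplus L(2,0)\,|\,L(0,0)\oplus L(0,1)$. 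Both have non-simple head. So you need an extra input: an actual computation of $\Ext^1_G(L(2,0),X_2)$, or a proof that $\nabla(0,2)$ has simple head. The paper does not derive the structure from scratch; it quotes it from Doty's computations (Appendix C there) and Andersen's observation, with Lemma~\ref{lem:bootstrap} and the table mentioned only as a check.

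Several supporting claims are also false.
\begin{itemize}
\item In characteristic $2$ the module $\nabla(0,1)=L(0,1)$ is irreducible of dimension $14$; the short-root submodule of the adjoint module is the characteristic-$3$ phenomenon for $G_2$. Your own count $77=14+36+6+1+6+14$ requires this. Hence $\nabla(0,2)^{G_1}=\nabla(0,1)^{\mathrm F}=L(0,2)$ is simple and has no layer $L(2,0)$, which would in any case contradict $\Ext^1_G(L(2,0),L(0,2))=0$.
\item The composition factors themselves are asserted, not computed; citing Doty's tables would be fine.
\item The second $G_1$-socle layer $L(1,0)\otimes\nabla(1,0)^{\mathrm F}$ is only asserted. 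The space $H^1(G_1,L(1,0)\otimes L(0,2))=T(1,0)^{\mathrm F}\otimes L(0,2)$ merely contains $\Hom_{G_1}(L(1,0),\nabla(0,2)/L(0,2))$, as the kernel of a map into $H^1(G_1,L(1,0)\otimes\nabla(0,2))$ that you do not compute. Even granted, it places $L(1,0)$ in the third layer without excluding $L(2,0)$.
\item The head argument is wrong. Contravariant duality matches the head $L(0,2)$ of $\Delta(0,2)$ with the socle of $\nabla(0,2)$. The head of $\nabla(0,2)$ corresponds to the socle of $\Delta(0,2)$, which you have not determined.
\item ``Uniseriality follows because each intermediate socle is simple'' assumes exactly what has to be shown.
\end{itemize}
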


\begin{proof}
This is  may be found for instance in \cite[Appendix C]{Do}. It was also observed independently by H.~Andersen, see \cite[Remark~3.5.2]{BNPS2}. We note that it may also be easily verified directly using Lemma~\ref{lem:bootstrap} together with the extension table in Lemma~\ref{lem.ext.simples}.
\end{proof}

 \begin{Lemma}\label{lemma:(2,1)}
The  $G$-module $\nabla(2,1)$ has a filtration given as follows:
\[
%\nabla(2,1)\;=\;
\frac{\dfrac{\nabla(0,2)}{L(1,0)}}
{{\dfrac{T(1,0)^{\mathrm F}}
{L(0,1)\otimes\nabla(1,0)^{\rm F}}}}
\]
\end{Lemma}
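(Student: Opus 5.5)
The plan is to build the filtration from the bottom up, starting from the twisted tensor product, and to identify each successive layer using Lemma~\ref{lem:bootstrap}, the $5$-term exact sequence and characters. Read from the socle upwards, the claim is that there is a chain $0<V_1<V_2<V_3<\nabla(2,1)$ with
$$V_1\cong L(0,1)\otimes\nabla(1,0)^{\rm F},\quad V_2/V_1\cong T(1,0)^{\rm F},\quad V_3/V_2\cong L(1,0),\quad \nabla(2,1)/V_3\cong\nabla(0,2).$$

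\emph{Dimension check and the first layer.} In characteristic $2$ we have $\dim L(1,0)=6$ and $L(0,1)=\nabla(0,1)$ of dimension $14$. The dimensions are then $98+8+6+77=189=\dim\nabla(2,1)$, which confirms this reading. For $V_1$, I take the restriction of the multiplication map $\nabla(0,1)\otimes\nabla(2,0)\to\nabla(2,1)$ (product of sections on $G/B$) to the submodule $\nabla(0,1)\otimes\nabla(1,0)^{\rm F}$, using $\nabla(1,0)^{\rm F}\le\nabla(2,0)$. This map is nonzero, since it is nonzero on highest weight vectors. Restricting to $G_1$, any simple $G$-submodule of $L(0,1)\otimes\nabla(1,0)^{\rm F}$ has the form $L(0,1)\otimes S^{\rm F}$ with $S\le\nabla(1,0)$. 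Hence the socle of $V_1$ is $L(0,1)\otimes L(2,0)=L(2,1)$, which is simple, and so the map is injective. This gives $V_1$.

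\emph{The socle of $Q=\nabla(2,1)/V_1$.} By Lemma~\ref{lem:bootstrap}, for $\mu\le(2,1)$ we have $\Hom_G(L(\mu),Q)=\Ext^1_G(L(\mu),V_1)$. I compute these groups with the $5$-term sequence, using that $V_1^{G_1}=0$ (so $H^1(G,V_1\otimes L(\mu)^*)$ injects into $H^1(G_1,\cdot)^G$) together with Lemma~\ref{lem.coh.simples}. For $\mu=0$ this gives
$$\Ext^1_G(L(0,0),V_1)=\Hom_G\bigl(L(0,0),\nabla(1,0)^{\rm F}\otimes\nabla(1,0)^{\rm F}\bigr)=\mathbb F .$$
For the other $\mu$ in the extension table the analogous computation should vanish; for instance $\mu=(1,0)$ leads to $H^1(G_1,L(1,0)\otimes L(0,1))=0$. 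So the socle of $Q$ should be $L(0,0)$. At the next stage, Lemma~\ref{lem.gcoh} is exactly the statement needed to control extensions of $L(0,1)\otimes\nabla(1,0)^{\rm F}$ by modules involving $L(1,0)^{\rm F}$. Combining it with Lemma~\ref{lem:bootstrap} applied to the preimage of the socle, I would show that the second socle layer of $Q$ contains $L(2,0)=L(1,0)^{\rm F}$ sitting over $L(0,0)$. The resulting non-split extension is $T(1,0)^{\rm F}$, since $\Ext^1_G(L(1,0)^{\rm F},L(0,0))$ is one-dimensional and $T(1,0)$ is the unique non-split extension of $\nabla(1,0)$ by $\nabla(0)$. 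This gives $V_2$.

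\emph{The remaining layers.} Repeating the bootstrap with $V_2$ in place of $V_1$, I expect $\nabla(2,1)/V_2$ to have socle $L(1,0)$, which gives $V_3$; note that $\Ext^1_G(L(1,0),L(0,0))\neq0$ while $\Ext^1_G(L(1,0),L(2,0))=0$. The final step, which I expect to be the main obstacle, is to identify $M=\nabla(2,1)/V_3$ with $\nabla(0,2)$ itself rather than with a module that merely has the same composition factors. By Brauer's formula, $\operatorname{ch}M=\chi(0,2)$. To get the isomorphism, I would show that $M$ has simple socle $L(0,2)$. Then $M$ embeds in the injective hull of $L(0,2)$ in the category of modules whose weights are at most $(0,2)$, and that hull has $\nabla(0,2)$ as its largest submodule with such composition factors, so equality of characters finishes the argument. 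To show the socle of $M$ is simple, the cleanest route I would try first is to check $\Hom_G(L(\mu),M)=\Ext^1_G(L(\mu),V_3)$ for each $\mu$ occurring in $\nabla(0,2)$, using Lemma~\ref{lemma:(0,2)} and the table in Lemma~\ref{lem.ext.simples}. This requires the long exact sequences through $V_1$, $T(1,0)^{\rm F}$ and $L(1,0)$, where Lemma~\ref{lem.gcoh} and Lemma~\ref{lem.coh.ind} supply the vanishing needed to kill the unwanted contributions.
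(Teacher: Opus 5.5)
Your proposal contains a false intermediate claim, and it leaves the decisive final step unproved.

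\textbf{The socle of $Q=\nabla(2,1)/V_1$ is not $L(0,0)$.} Your construction of $V_1$ via the multiplication map is fine. Now apply your own method to $\mu=(0,2)$; note that $(0,2)\le(2,1)$. The module $L(0,2)\otimes V_1=L(0,1)\otimes(L(0,1)\otimes\nabla(1,0))^{\mathrm F}$ has no $G_1$-fixed points. So the $5$-term sequence and Lemma~\ref{lem.coh.simples}(iii) give $\Ext^1_G(L(0,2),V_1)\cong\bigl(\nabla(1,0)\otimes L(0,1)\otimes\nabla(1,0)\bigr)^G\cong\Hom_G(\Delta(0,1),\nabla(1,0)\otimes\nabla(1,0))\cong\mathbb F$, using Lemma~\ref{lem.good.filtrations}(i) and $L(0,1)=\Delta(0,1)$. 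By Lemma~\ref{lem:bootstrap}, $\soc_G Q\supseteq L(0,0)\oplus L(0,2)$. In other words, the socle of the top piece $\nabla(0,2)$ already sits directly on $L(2,1)$, as $\Ext^1_G(L(0,2),L(2,1))\neq0$ permits. This $L(0,2)$ also survives in $\nabla(2,1)/V_2$, so your expected socle $L(1,0)$ there is wrong as well. Running the bootstrap inside $\nabla(2,1)$ therefore mixes the lower sections with the bottom of $\nabla(0,2)$, and the layer-by-layer identification does not go through as described. Two smaller points:
\begin{itemize}
\item $L(2,0)$ over $L(0,0)$ is $\Delta(1,0)^{\mathrm F}$, not $T(1,0)^{\mathrm F}$, so you still have to place the second $L(0,0)$.
\item Lemma~\ref{lem.gcoh} is what rules out the untwisted $L(1,0)$ sitting over $L(0,0)$, i.e.\ it gives $\Ext^1_G(\Delta(1,0),V_1)=0$.
\end{itemize}

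\textbf{The last step is not supplied.} You propose to prove $\Ext^1_G(L(\mu),V_3)=0$ for the other composition factors $\mu$ of $\nabla(0,2)$. This runs into the top factor $L(1,0)$ of $V_3$: both $\Ext^1_G(L(0,0),L(1,0))$ and $\Ext^1_G(L(3,0),L(1,0))$ are nonzero. You would therefore need injectivity of connecting maps into $\Ext^2_G(L(\mu),V_2)$, and Lemmas~\ref{lem.gcoh} and~\ref{lem.coh.ind} say nothing about these.

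The missing idea, which the paper uses, is to obtain $\nabla(0,2)$ as a quotient at the outset. Since $(2,1)-(0,2)=\alpha$ is a simple root, there is a nonzero map $\nabla(2,1)\to\nabla(0,2)$. It is surjective because both modules have simple head $L(0,1)$. For its kernel $K$, the exact sequence $\Hom_G(L(\lambda),\nabla(0,2))\to\Ext^1_G(L(\lambda),K)\to\Ext^1_G(L(\lambda),\nabla(2,1))=0$ gives $\Ext^1_G(L(\lambda),K)=0$ for all $\lambda\le(2,1)$ with $\lambda\neq(0,2)$. This yields an analogue of Lemma~\ref{lem:bootstrap} inside $K$, where $L(0,2)$ does not occur. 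Your bottom-up analysis then works, with the socle of $K/V_1$ really equal to $L(0,0)$, and the top quotient is $\nabla(0,2)$ by construction.

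Alternatively, once a correct $V_3$ is in hand, set $M=\nabla(2,1)/V_3$. Frobenius reciprocity gives $\Hom_G(M,\nabla(0,2))\cong\Hom_B(M,\mathbb F_{(0,2)})\neq0$, since $(0,2)$ is the highest weight of $M$. The simple head $L(0,1)$ of $M$ together with the uniseriality of $\nabla(0,2)$ then forces this map to be an isomorphism. Either way you need the simple head of $\nabla(2,1)$, which your plan never uses.
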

\begin{proof}
First note that $(2,1)-(0,2)=(2,-1)$ which is the simple root $\alpha$. Hence, there is a non-trivial $G$-homomorphism from $\nabla(2,1)$ to $\nabla(0,2)$ \cite[5.10 Proposition]{Don}. Now, recall that $\nabla(0,2)$ has simple head $L(0,1)$ and that it is uniserial. Moreover, since $\nabla(2,1)$ appears as a top section in a good filtration of $T(2,1)$ we have that $\nabla(2,1)$ has also simple head $L(0,1)$.  It follows at once that a non-trivial homomorphism from $\nabla(2,1)$ to $\nabla(0,2)$ is surjective and so we have a short exact sequence 
\begin{equation}\label{eq:exact-sequence02}
0 \rightarrow K \rightarrow \nabla(2,1) \rightarrow \nabla(0,2)\rightarrow 0
\end{equation}
Now, comparing the composition factors of $\nabla(2,1)$ and $\nabla(0,2)$, \cite[Appendix C and D]{Do}, we have that the composition factors of $K$ are $L(0,0)$, appearing with multiplicity $2$, and $L(1,0),L(0,1),L(2,0)$ and $L(2,1)$, each appearing with multiplicity $1$. Suppose that $X$ is a submodule of $K$. We consider the short exact sequence 
$$0\rightarrow X\rightarrow K\rightarrow K/X\rightarrow 0$$
and we apply $\Hom_G(L(\lambda),-)$ to this sequence for some composition factor $L(\lambda)$ of $K$. Since $\soc_GX=\soc_GK=L(2,1)$ we obtain the exact sequence 
$$0\rightarrow \Hom_G(L(\lambda),K/X)\rightarrow \Ext^1_G(L(\lambda),X)\rightarrow  \Ext^1_G(L(\lambda),K)$$
We claim that  $\Ext^1_G(L(\lambda),K)=0$. For this, we apply $\Hom_G(L(\lambda),-)$ to  the sequence \eqref{eq:exact-sequence02}, and we  obtain the exact sequence 
$$\Hom_G(L(\lambda),\nabla(2,0))\rightarrow \Ext^1_G(L(\lambda),K)\rightarrow  \Ext^1_G(L(\lambda),\nabla(2,1))$$
Since $\lambda\leq (2,1)$ and $\lambda\neq(0,2)$, we deduce that 
$$\Hom_G(L(\lambda),\nabla(0,2))=\Ext^1_G(L(\lambda),\nabla(2,1))=0$$
and so $\Ext^1_G(L(\lambda),K)=0$. It follows that 
$$\Ext^1_G(L(\lambda),X)=\Hom_G(L(\lambda),K/X)$$
for every composition factor of $K$. In other words, we obtain an analogue of Lemma~\ref{lem:bootstrap} for $K$ that allows us to describe its structure. We proceed directly with this technique.
We have that $\soc_GK=L(2,1)$ and from Lemma~\ref{lem.ext.simples}, among the composition factors of $K$, the simple modules that extend with $L(2,1)$ are $L(0,1)$ and $L(0,0)$. Therefore, we have created the submodule 
 \[
K_1\;=\;
\frac{L(0,1)\oplus L(0,0)}{L(2,1)}
\]
of $K$.
Note that this provides the non spilt short exact sequence
$$0\rightarrow \nabla(0,1)\otimes\nabla(1,0)^F\rightarrow K_1\rightarrow L(0,0)\rightarrow 0$$  
Now, again from the table of Lemma~\ref{lem.ext.simples}, the possible candidates that may appear directly above $K_1$ and inside $K$ are $L(1,0)$, which extends non-trivially with $L(0,0)$, and $L(2,0)$ that extends non-trivially with both $L(0,0)$ and $L(0,1)$.

First, we show that $\Ext^1_G(L(1,0),K_1)=0$. Suppose for contradiction that there is a non-trivial extension. Then $L(1,0)$ can only extend with  $L(0,0)$ and create $\Delta(1,0)$. However, by Lemma~\ref{lem.gcoh} we have that
$$\Ext^1_G(\Delta(1,0),\nabla(0,1)\otimes\nabla(1,0)^F)= H^1(G,\nabla(1,0)\otimes \nabla(0,1)\otimes\nabla(1,0)^F)=0$$
Therefore $L(1,0)$ does not appear in the socle of $K/K_1$ and so $L(2,0)$ must do. 

Now, suppose that $L(2,0)$ does not extend with the $L(0,0)$ from the second socle layer of $K$. This would imply that $K$ contains the $G$-submodule
\[\frac{\dfrac{L(2,0)}{L(0,1)}\oplus \raisebox{-1ex}{$L(0,0)$}}{L(2,1)}\]
However, by Lemma~\ref{lem.coh.simples}(iii) and Lemma~\ref{lem.good.filtrations}(iii), we have that 
\begin{align*}
\Ext^1_G(L(2,0),L(0,1)\otimes\nabla(1,0)^F)
&= H^1(G,L(0,1)\otimes L(1,0)^{\mathrm F}\otimes \nabla(1,0)^{\mathrm F})\\
&= H^0(G,\nabla(1,0)\otimes L(1,0)\otimes \nabla(1,0))=0.
\end{align*}
Therefore, $L(2,0)$ cannot  extend solely  $L(0,1)$ and so it must certainly interact with $L(0,0)$. Hence, we have constructed inside $K$ the $G$-submodule 
\[
K_2\;=\;
\frac{\Delta(1,0)^{\rm F}}{L(0,1)\otimes \nabla(1,0)^{\rm F}}
\]
Now, we have that $K/K_2$ has composition factors $L(0,0)$ and $L(1,0)$. We show that $L(1,0)$ cannot appear in the socle of $K/K_2$. Suppose for contradiction  that it does.  Since $\Ext^1_G(L(1,0),L(2,0))=0$ and $L(1,0)$ does not appear in the second socle layer of $K$, we must have that there  is an extension between the copy of $L(0,0)$ in the second socle layer of $K$ with $L(1,0)$, which in turn forms $\Delta(1,0)$. But as we have seen above,  $\Ext^1_G(\Delta(1,0),\nabla(0,1)\otimes\nabla(1,0)^F)=0$ and so this is impossible. We deduce that $L(1,0)$ does not appear directly above $K_2$ and so we must have that $L(0,0)$ does. In fact, this extends with $\Delta(1,0)^F$ and so we obtain that 
\[
K\;=\;
\frac{\dfrac{L(1,0)}{T(1,0)^{\rm F}}}{L(0,1)\otimes \nabla(1,0)^{\rm F}}
\]
Therefore we have the desired picture for $\nabla(2,1)$.
 \end{proof}

\section{An epimorphism from $\St\otimes \St$ to $T(2,1)$}\label{sec:surj.tilt}
Here, we show that  there is a surjective $G$-homomorphism from $\St\otimes \St$ to the tilting module $T(2,1)$ and that $T(2,2)$ is the unique component in the principal block (i.e., the block of ${\rm mod}(G)$ containing $L(0)$)  of $\St\otimes \St$.

The radical of a module $X$ (i.e., the smallest submodule for which the corresponding quotient is semisimple) will be denoted $\rad X$. We note that it follows (for example from \cite[II, 4.13 Proposition]{J1}) that if $X$ is a $G$-module admitting a Weyl filtration and $Y$ is a $G$-module admitting a good filtration then we have 
$$\dim \Hom_G(X,Y)=\sum_{\lambda\in X^+(T)} (X:\Delta(\lambda))(Y:\nabla(\lambda)).$$

Recall from Section~\ref{sec:sub.in} that that the induced $G$-modules $\nabla(0,2)$ and $\nabla(2,1)$ have both simple head $L(0,1)$. We calculate the dimension of the following spaces of homomorphisms.

\begin{Lemma}\label{lem:dim.St}
We have:
\begin{enumerate}[label = (\roman*), font=\normalfont]
\item $\dim \Hom_G(\St\otimes \St,\nabla (0,2))=2$;
\item $\dim \Hom_G( \St\otimes \St,\nabla(2,1))=3$;
\item $\dim \Hom_G(\St\otimes \St,\rad\nabla(0,2))=1$;
\item $\dim \Hom_G(\St\otimes \St,\rad\nabla(2,1))=2$.
\end{enumerate}
\end{Lemma}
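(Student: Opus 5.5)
The plan is to compute (i) and (ii) from filtration multiplicities, and to obtain (iii) and (iv) from them by controlling how many homomorphisms to $\nabla(0,2)$ or $\nabla(2,1)$ survive after composing with the projection onto the simple head $L(0,1)$.

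\emph{Parts (i) and (ii).} $\St\otimes\St$ is a tilting module, so it has a Weyl filtration, while $\nabla(0,2)$ and $\nabla(2,1)$ have good filtrations. The dimension formula recorded just before the lemma therefore gives
$$\dim\Hom_G(\St\otimes\St,\nabla(\lambda))=(\St\otimes\St:\Delta(\lambda)).$$
The character of $\St\otimes\St$ in Lemma~\ref{lem:ch.stst}(ii) contains $\chi(0,2)$ with coefficient $2$ and $\chi(2,1)$ with coefficient $3$. These coefficients are the Weyl filtration multiplicities, which gives $2$ and $3$.

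\emph{Reduction of (iii) and (iv).} For $\nabla=\nabla(0,2)$ or $\nabla(2,1)$, both with simple head $L(0,1)$ by Section~\ref{sec:sub.in}, apply $\Hom_G(\St\otimes\St,-)$ to
$$0\to\rad\nabla\to\nabla\to L(0,1)\to 0.$$
This identifies $\Hom_G(\St\otimes\St,\rad\nabla)$ with the kernel of
$$\pi_*\colon\Hom_G(\St\otimes\St,\nabla)\to\Hom_G(\St\otimes\St,L(0,1)).$$
Since $\St$ is self-dual, $\Hom_G(\St\otimes\St,L(0,1))\cong\Hom_G(\St\otimes L(0,1),\St)$. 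By Lemma~\ref{lem:ch.T(1,2)}(i), $\St\otimes L(0,1)=T(1,2)\oplus\St^2$. The module $T(1,2)$ lies in the block of $L(1,0)$, not the Steinberg block, so this Hom space has dimension $2$. It therefore suffices to show that $\pi_*$ has rank exactly $1$ in both cases; then (i) and (ii) give $2-1=1$ and $3-1=2$.

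\emph{The rank of $\pi_*$ is at most $1$.} Suppose $\pi_*$ has rank $2$ for $\nabla(0,2)$. Then there are $f_1,f_2$ whose images in the head $L(0,1)$ are linearly independent, and $(f_1,f_2)\colon\St\otimes\St\to\nabla(0,2)^2$ maps onto the head $L(0,1)^2$. As the head of $\nabla(0,2)^2$ is $L(0,1)^2$, this map is surjective. Restricting to $G_1$ and using Lemma~\ref{lem:ch.stst}(i), $Q_1(0)\oplus Q_1(0,1)^2\oplus\St^{16}$ would surject onto $\nabla(0,2)^2|_{G_1}$. The $\St^{16}$ summand maps to zero, since $\St$ is alone in its $G_1$-block. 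Now compare $G_1$-heads: from the structure in Lemma~\ref{lemma:(0,2)}, $L(0,1)$ and the trivial module both occur in the $G_1$-head of $\nabla(0,2)$. I expect this to be the key point: I would verify directly that $\Hom_{G_1}(\nabla(0,2),L(0,0))\neq 0$, e.g.\ via the quotient $L(0,1)\oplus L(0,0)$ coming from $\Delta(1,0)^{\mathrm F}$. Then $\nabla(0,2)^2$ needs trivial $G_1$-head multiplicity $2$, exceeding the single trivial head of $Q_1(0)$, a contradiction. The same argument applies to $\nabla(2,1)$, since it surjects onto $\nabla(0,2)$. For the lower bound in (iv) it is enough to show that $\Hom_G(\St\otimes\St,\rad\nabla(2,1))$ has dimension at most $2$, which follows from the rank being at least $1$.

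\emph{The rank of $\pi_*$ is at least $1$.} I must show that not every map lands in the radical. The plan is to use the $G_1$-summand $Q_1(0,1)^2$ of $\St\otimes\St$, which corresponds to the $T(2,1)$-type part of the principal block. If every map $\St\otimes\St\to\nabla(0,2)$ had image in the radical, then $\dim\Hom_G(\St\otimes\St,\rad\nabla(0,2))=2$. I would contradict this by bounding that Hom space above by $1$, using the explicit uniserial structure of $\rad\nabla(0,2)$ from Lemma~\ref{lemma:(0,2)} together with Lemma~\ref{lem:bootstrap}: a map into the radical has image a submodule with socle $L(0,2)$, and the possible heads of such images must be matched against $\Hom_G(\St\otimes\St,L(\mu))$ for $\mu=(1,0),(3,0),(2,0),(0,0)$. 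These Hom spaces are computed like the one above, via $\St\otimes L(\mu)$.

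I expect these two rank estimates, especially the $G_1$-head comparison in the upper bound, to be the main obstacle.
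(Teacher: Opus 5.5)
Your proofs of (i) and (ii) are correct. So is the reduction of (iii) and (iv) to showing that $\pi_*$ has rank exactly $1$. The gap is in the step you flag as the key point: the claim $\Hom_{G_1}(\nabla(0,2),L(0,0))\neq 0$ is false.

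Indeed $\Hom_{G_1}(\nabla(0,2),L(0,0))\cong\Delta(0,2)^{G_1}$, and this is a $G$-submodule of $\Delta(0,2)=\nabla(0,2)^*$. Since $\Delta(0,2)$ has simple $G$-socle $L(0,1)$, on which $G_1$ acts non-trivially, this space is $0$. More generally, a $G$-module with simple $G$-head $L(\lambda)$, $\lambda\in X_1$, has $L(\lambda)$-isotypic $G_1$-head. In particular there is no $G_1$-quotient $L(0,1)\oplus L(0,0)$. The non-split extension $M$ of $L(2,0)$ by $L(0,1)$ stays non-split over $G_1$, by the $5$-term sequence, because $(L(0,1)\otimes L(1,0)^{\mathrm F})^{G_1}=0$.

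So $\nabla(0,2)|_{G_1}$ has simple head $L(0,1)$, and $\nabla(0,2)^2|_{G_1}$ is a quotient of $Q_1(0,1)^2$. No comparison of $G_1$-heads can rule out a surjection $\St\otimes\St\to\nabla(0,2)^2$. Without $\mathrm{rank}\,\pi_*\le 1$ you have no lower bound in (iii) or (iv).

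There is a second gap. The upper bound in (iv) needs $\mathrm{rank}\,\pi_*\ge 1$ for $\nabla(2,1)$, but you only argue this for $\nabla(0,2)$. It does not transfer, because the image of $\pi_*$ for $\nabla(2,1)$ lies inside the image for $\nabla(0,2)$, not conversely. Your composition-factor count does work for $\rad\nabla(0,2)$: only the factor $L(0,0)$ contributes, giving $\le 1$. For $\rad\nabla(2,1)$, however, it gives only $\le 5$, because of its three trivial factors and the factor $L(0,1)$.

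The missing idea is to compute these spaces exactly using the $G$-structure, as the paper does.
\begin{itemize}
\item Write $\Hom_G(\St\otimes\St,X)\cong\Hom_G(\St,\St\otimes X)$.
\item Tensor the filtrations of Lemma~\ref{lemma:(0,2)} and Lemma~\ref{lemma:(2,1)} with $\St$ and keep only Steinberg-block components. Here one uses $\St\otimes L(0,2)=\nabla(1,3)$, $\St\otimes\Delta(1,0)^{\mathrm F}=\Delta(3,1)$, $\St\otimes T(1,0)^{\mathrm F}=T(3,1)$, $\St\otimes L(1,0)=T(2,1)$ and $\St\otimes L(0,1)=T(1,2)\oplus\St^2$.
\item The resulting extensions split by $\Ext^1$-vanishing between modules with Weyl and good filtrations (note $\nabla(1,3)=\St\otimes L(0,1)^{\mathrm F}$ is tilting). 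This gives $\Delta(3,1)\oplus\nabla(1,3)$ for $\rad\nabla(0,2)$, and $T(3,1)\oplus\nabla(1,3)\oplus\Delta(3,1)\oplus\nabla(3,1)^2$ for $\rad\nabla(2,1)$.
\item Since $\Hom_G(\St,\St\otimes V^{\mathrm F})\cong V^G$, one reads off exactly $1$ and $2$, so both bounds come at once.
\end{itemize}

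Alternatively, you could rescue $\mathrm{rank}\,\pi_*\le 1$ by composing with the surjection $\nabla(0,2)\to M$ and invoking $\dim\Hom_G(\St\otimes\St,M)=1$ from Lemma~\ref{lem:surjM}(iv), which is an input from \cite{BNPS2}. That still leaves the second gap.
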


\begin{proof}
Part (i) and (ii) follow directly from the character of $\St \otimes \St$, see Lemma~\ref{lem:ch.stst}.
We now show (iii). From Lemma~\ref{lemma:(0,2)} we have that 
\[\rad\nabla(0,2)\;=\;
\frac{\Delta(1,0)^{\mathrm F}}
{\dfrac{L(1,0)\otimes \nabla(1,0)^{\mathrm F}}
 {L(0,2)}}
\]
and so 
\[\St\otimes\rad\nabla(0,2)\;=\;
\frac{\St\otimes \Delta(1,0)^{\mathrm F}}
{\dfrac{\St\otimes L(1,0)\otimes \nabla(1,0)^{\mathrm F}}
 {\St\otimes L(0,2)}}
\]
Now, $\St\otimes L(0,2)=\nabla(1,3)$ and $\St\otimes \Delta(1,0)^{\mathrm F}=\Delta(3,1)$. Moreover,  $\St\otimes L(1,0)=T(2,1)$, from Lemma~\ref{lem:ch.T(2,1)}, and so $\St\otimes L(1,0)\otimes \nabla(1,0)^{\mathrm F}=T(2,1)\otimes \nabla(1,0)^{\mathrm F}$. Note that  the Steinberg block component (i.e., the component in  the block of ${\rm mod}(G)$ containing $\St$) of $T(2,1)\otimes \nabla(1,0)^{\mathrm F}$ is $0$ and so we deduce that the component in the  Steinberg block of $\St\otimes\rad\nabla(0,2)$ is $\Delta(3,1)\oplus \nabla(1,3)$. It follows that 
$$\dim \Hom_G(\St,\St\otimes \rad\nabla(0,2))=\dim\Hom_G(\St,\Delta(3,1)\oplus \nabla(1,3))=1$$
We now show (iv). From Lemma~\ref{lemma:(2,1)} we have that 
%\[\rad\nabla(2,1)={\mathcal F}rac{\dfrac{\rad\nabla(0,2)}{\dfrac{L(1,0)}{\dfrac{L(1,0)}{T(1,0)^{\mathrm F}}}}{\nabla(0,1)\otimes\nabla(1,0)^F}\]
\[\rad\nabla(2,1)=\frac{\dfrac{\rad\nabla(0,2)}{L(1,0)}}{\dfrac{T(1,0)^{\rm F}}{\nabla(1,0)^F\otimes L(0,1)}}\]
and so 
\[\St\otimes \rad\nabla(2,1)=\frac{\dfrac{\St\otimes\rad\nabla(0,2)}{\St\otimes L(1,0)}}{\dfrac{\St\otimes T(1,0)^{\rm F}}{\St\otimes \nabla(1,0)^{\rm F}\otimes L(0,1)}}\]
Now, $\St\otimes L(0,1)=T(1,2)\oplus \St^2$, from Lemma~\ref{lem:ch.T(1,2)}. Therefore 
$$\St\otimes \nabla(1,0)^F\otimes \nabla(0,1)=T(1,2)\otimes  \nabla(1,0)^F\oplus \nabla(3,1)^2.$$
Note, that  the Steinberg block component of the $G$-module $T(1,2)\otimes  \nabla(1,0)^F$ is $0$. 
Moreover, $\St\otimes T(1,0)^F=T(3,1)$ and $\St\otimes L(1,0)=T(2,1)$. Finally from part (iii) we have that the Steinberg component of $\St\otimes\rad\nabla(0,2)$ is  $\Delta(3,1)\oplus T(1,3)$. Therefore, the Steinberg component  of 
$\St\otimes \rad\nabla(2,1)$ is $T(3,1)\oplus T(1,3)\oplus \Delta(3,1)\oplus \nabla(3,1)^2$. Hence we obtain 
$$\dim \Hom_G(\St,\St\otimes \rad\nabla(2,1))=\dim\Hom_G(\St,T(3,1)\oplus T(1,3)\oplus \Delta(3,1)\oplus \nabla(3,1)^2)=2.$$
\end{proof}

\begin{Proposition}\label{prop:surj}
There is a surjective $G$-homomorphism  from $\St\otimes \St$ to $T(2,1)$.
\end{Proposition}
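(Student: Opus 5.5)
The plan is to produce a homomorphism $\psi\colon \St\otimes\St\to T(2,1)$ whose image is not contained in $\rad T(2,1)$, and then to use the fact that $T(2,1)$ has simple head to conclude that $\psi$ is surjective.

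\emph{Step 1: $T(2,1)$ has simple head $L(0,1)$.} By Lemma~\ref{lem:ch.T(2,1)}(ii), $T(2,1)|_{G_1}=Q_1(0,1)$. This is a projective indecomposable $G_1$-module, so it has simple $G_1$-head $L(0,1)$. If $T(2,1)\to S$ is a semisimple $G$-quotient, its restriction to $G_1$ is a semisimple $G_1$-quotient, so it is $0$ or $L(0,1)|_{G_1}$. Hence the $G$-head of $T(2,1)$ is $L(0,1)$. Consequently a submodule of $T(2,1)$ not contained in $\rad T(2,1)$ is all of $T(2,1)$.

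\emph{Step 2: choose a map to $\nabla(2,1)$ and lift it.} By Lemma~\ref{lem:ch.T(2,1)}(iii), $T(2,1)$ has a good filtration in which $\nabla(2,1)$ occurs once as the top section. This gives a surjection $\pi\colon T(2,1)\to\nabla(2,1)$ whose kernel $Y$ has a good filtration. By Lemma~\ref{lem:dim.St}(ii),(iv) we have $\dim\Hom_G(\St\otimes\St,\nabla(2,1))=3>2=\dim\Hom_G(\St\otimes\St,\rad\nabla(2,1))$. So there is $\varphi\colon\St\otimes\St\to\nabla(2,1)$ whose image is not in $\rad\nabla(2,1)$; since $\nabla(2,1)$ has simple head $L(0,1)$, $\varphi$ is onto. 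Since $\St\otimes\St$ is tilting, it has a Weyl filtration, and therefore $\Ext^1_G(\St\otimes\St,Y)=0$ because $Y$ has a good filtration \cite[II, 4.13]{J1}. Thus $\Hom_G(\St\otimes\St,T(2,1))\to\Hom_G(\St\otimes\St,\nabla(2,1))$ is surjective, and we can lift $\varphi$ to some $\psi$ with $\pi\circ\psi=\varphi$.

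\emph{Step 3: $\psi$ is surjective.} A surjection maps radicals onto radicals, so $\pi(\rad T(2,1))=\rad\nabla(2,1)$. If $\psi(\St\otimes\St)\subseteq\rad T(2,1)$, then $\varphi(\St\otimes\St)\subseteq\rad\nabla(2,1)$, which contradicts the choice of $\varphi$. Hence $\operatorname{im}\psi\not\subseteq\rad T(2,1)$, and by Step 1, $\psi$ is surjective.

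The substantive input is the dimension count of Lemma~\ref{lem:dim.St}, which is already available. The only point needing care here is Step 1, namely that the $G$-head of $T(2,1)$ is simple; the argument via the $G_1$-restriction handles it. (One could alternatively run the same argument with $\nabla(0,2)$ via parts (i) and (iii), but $T(2,1)$ maps onto $\nabla(2,1)$ directly, so $\nabla(2,1)$ is the natural choice.)
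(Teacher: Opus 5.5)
Your proof is correct and uses the same ingredients as the paper: the submodule $Y\subseteq T(2,1)$ with good filtration and quotient $\nabla(2,1)$, the vanishing $\Ext^1_G(\St\otimes\St,Y)=0$, the simple head $L(0,1)$ of $T(2,1)$, and the count $3>2$ from Lemma~\ref{lem:dim.St}(ii),(iv). The paper instead computes $\dim H^1(G,\St\otimes\St\otimes\rad T(2,1))$ through two long exact sequences, in which $\dim\Hom_G(\St\otimes\St,L(0,1))=2$ enters and then cancels, whereas you lift a map directly and so avoid that input; this is a slightly cleaner packaging of the same argument.
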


\begin{proof}
Recall that $T(2,1)$ has simple head the $G$-module $L(0,1)$ and that it has a good filtration with top factor $\nabla(2,1)$. We write $Y$ for the submodule of $T(2,1)$ with quotient $T(2,1)/Y=\nabla(2,1)$. Note that $Y\subseteq \rad T(2,1)$, it has a good filtration  and that $\rad T(2,1)/Y=\rad\nabla(2,1)$.

Now, there is a surjective homomorphism $\St\otimes \St$ from $T(2,1)$ if and only if   \\
$\dim \Hom_G(\St\otimes \St, T(2,1))>\dim \Hom_G(\St\otimes \St, \rad T(2,1))$.  We consider the short exact sequence 
$$0\rightarrow \rad T(2,1)\rightarrow T(2,1)\rightarrow L(0,1)\rightarrow 0$$
We apply $\Hom_G(\St\otimes \St,-)$ and we obtain the exact sequence 
\begin{align*}
0 &\longrightarrow H^0(G,\St\otimes \St\otimes \rad T(2,1))
   \longrightarrow H^0(G,\St\otimes \St\otimes T(2,1))
   \longrightarrow H^0(G, \St\otimes \St\otimes L(0,1))\\
  &\longrightarrow  H^1(G,\St\otimes \St\otimes  \rad T(2,1))
    \longrightarrow 0
    \end{align*}
Now, from Lemma~\ref{lem:ch.T(1,2)},  $\St\otimes L(0,1)=T(1,2)\oplus \St^2$ and so $\dim H^0(G, \St\otimes \St\otimes L(0,1))=2$. It follows that there is a surjective homomorphism from $\St\otimes \St$ to $T(2,1)$ if and only if 
$$\dim  H^1(G,\St\otimes \St\otimes  \rad T(2,1))\leq 1$$
Now, we consider the short exact sequence 
$$0\rightarrow Y\rightarrow \rad T(2,1)\rightarrow \rad \nabla(2,1)\rightarrow0$$
Since $Y$ has a good filtration, we have that $H^i(G,\St\otimes \St\otimes Y)=0$, for $i>0$. Therefore from the corresponding long exact sequence we obtain that 
$$H^1(G,\St\otimes \St\otimes  \rad T(2,1))=H^1(G,\St\otimes \St\otimes  \rad \nabla(2,1))$$
and so we have a surjective homomorphism from $\St\otimes \St$ to $T(2,1)$ if and only if 
$$\dim  H^1(G,\St\otimes \St\otimes  \rad \nabla(2,1))\leq 1$$
Finally, we consider the short exact sequence 
$$0\rightarrow \rad \nabla(2,1)\rightarrow \nabla(2,1)\rightarrow L(0,1)\rightarrow 0$$
We apply $\Hom_G(\St\otimes \St,-)$ and we obtain the exact sequence 
\begin{align*}
0 &\longrightarrow H^0(G,\St\otimes \St\otimes \rad \nabla(2,1))
   \longrightarrow H^0(G,\St\otimes \St\otimes \nabla(2,1))
   \longrightarrow H^0(G, \St\otimes \St\otimes L(0,1))\\
  &\longrightarrow  H^1(G,\St\otimes \St\otimes  \rad \nabla(2,1))
    \longrightarrow 0
    \end{align*}
 Now, we have $\dim H^0(G,\St\otimes \St\otimes \rad \nabla(2,1))=2$ and $\dim H^0(G,\St\otimes \St\otimes \nabla(2,1))=3$, by Lemma~\ref{lem:dim.St}. Moreover, $\dim H^0(G, \St\otimes \St\otimes L(0,1))=2$. It follows that 
 $$\dim H^1(G,\St\otimes \St\otimes  \rad \nabla(2,1))=1$$
 and so we have the desired surjective homomorphism.
 \end{proof}
 
 Since $T(2,1)$ and $\St\otimes\St$ are self-dual modules we also immediately  obtain the following: 
  \begin{Corollary}\label{cor:emb}
There is an embedding of $T(2,1)$ into the $G$-module $\St\otimes \St$.  
\end{Corollary}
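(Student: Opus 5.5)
The plan is to dualize the surjection of Proposition~\ref{prop:surj}. Let $\phi\colon \St\otimes\St\to T(2,1)$ be the surjective $G$-homomorphism constructed there. Taking duals is an exact contravariant functor on ${\rm mod}(G)$, so it turns $\phi$ into an injective $G$-homomorphism $\phi^*\colon T(2,1)^*\to (\St\otimes\St)^*$. It therefore suffices to identify both duals with the original modules.

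For the target, $(\St\otimes\St)^*\cong \St^*\otimes\St^*$. Since $\St=\nabla(1,1)=\Delta(1,1)$ and $\Delta(\lambda)=\nabla(-w_0\lambda)^*$, with $w_0=-1$ for type $G_2$, we get $\St^*\cong\Delta(1,1)^{*}\cong\nabla(1,1)=\St$. Hence $(\St\otimes\St)^*\cong\St\otimes\St$.

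For the source, the dual of a tilting module is again tilting: $T(2,1)^*$ is indecomposable, and both it and its dual have good filtrations. Its unique highest weight, of multiplicity one, is $-w_0(2,1)=(2,1)$. By the uniqueness of indecomposable tilting modules recalled in Section~\ref{sec:prel}, $T(2,1)^*\cong T(2,1)$. (Alternatively, Lemma~\ref{lem:ch.T(2,1)}(i) gives $T(2,1)=\St\otimes L(1,0)$, and both factors are self-dual.)

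Composing the isomorphisms $T(2,1)\cong T(2,1)^*$ and $(\St\otimes\St)^*\cong\St\otimes\St$ with $\phi^*$ gives the required embedding of $T(2,1)$ into $\St\otimes\St$. There is no real obstacle here; the only point to check is that $w_0=-1$ in type $G_2$, which is what makes all the modules involved self-dual.
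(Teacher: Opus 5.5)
Your proposal is correct and follows the paper's own argument: the paper deduces the corollary at once by dualizing the surjection of Proposition~\ref{prop:surj}, using that $T(2,1)$ and $\St\otimes\St$ are self-dual. You additionally justify that self-duality via $w_0=-1$ in type $G_2$ and the uniqueness of indecomposable tilting modules, which the paper leaves implicit.
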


 We now proceed with our second claim regarding the decomposition of $\St\otimes \St$ into tilting modules. Since $(2,2)$ is the highest weight of $\St\otimes \St$, we have that $T(2,2)\mid \St\otimes \St$. Now, recall from Lemma~\ref{lem:ch.stst} that $\St\otimes \St$ is injective as a $G_1$-module and we have a decomposition
 $$\St\otimes \St|_{G_1}=Q_1(0)\oplus Q_1(0,1)^2\oplus \St^{16}$$
 Moreover, we have that $T(2,1)$ is injective as $G_1$-module with $T(2,1)|_{G_1}=Q_1(0,1)$. We write $D$ for the component of $\St\otimes \St$ in the principal block and by Lemma~\ref{lem:ch.stst} we deduce that
 $$\St\otimes \St=D\oplus T(3,1)^2$$
and $D$ can be one of the following modules:  $T(2,2)$ or $T(2,2)\oplus T(2,1)$ or $T(2,2)\oplus T(2,1)^2$. In \cite[Theorem 4.1.1]{BNPS2} the authors ruled out the third case and in that way they provided the first counterexample to the Tilting Module conjecture. Here we show that $D=T(2,2)$ and so we obtain the decomposition of $\St\otimes \St$ into tilting modules. 
 
 From now, we write $M$  for the unique non-split extension of $L(2,0)$ by $L(0,1)$, i.e., the $G$-module $M$ fits into the non split short exact sequence 
 $$0\rightarrow L(2,0)\rightarrow M\rightarrow L(0,1)\rightarrow 0$$
 We gather some important results regarding the $G$-module $M$ in the following lemma.
 
 \begin{Lemma}\label{lem:surjM}
For the $G$-module $M$ we have that:
 \begin{enumerate}[label = (\roman*), font=\normalfont]
\item There is a surjective $G$-homomorphism from  $\nabla(0,2)$ to $ M$;
\item There is a surjective $G$-homomorphism from $\nabla(2,1)$ to $M$;
\item There is a surjective $G$-homomorphism  from $\nabla(2,2)$ to $M$;
\item $\dim\Hom_G(\St\otimes \St,M )=1$.
\end{enumerate}
\end{Lemma}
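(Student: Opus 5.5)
For (i) I will read the map off the structure of $\nabla(0,2)$ in Lemma~\ref{lemma:(0,2)}. Its top two socle layers are $L(0,1)$ over $L(2,0)$, the $L(2,0)$ lying in $\Delta(1,0)^{\rm F}$. Let $U\subset\nabla(0,2)$ be the submodule with quotient $\Delta(1,0)^{\rm F}$ on top of $L(0,1)$; so $U$ is $L(0,0)$ over $L(1,0)\otimes\nabla(1,0)^{\rm F}$ over $L(0,2)$. The quotient $\nabla(0,2)/U$ has $L(0,1)$ as head and $L(2,0)$ as socle. It is non-split, because in the uniserial module $\nabla(0,2)$ the factor $L(0,1)$ sits directly above $L(2,0)$. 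Since $\Ext^1_G(L(0,1),L(2,0))$ is one dimensional by Lemma~\ref{lem.ext.simples}, this quotient is isomorphic to $M$, which gives (i).

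For (ii), compose the surjection $\nabla(2,1)\to\nabla(0,2)$ from the proof of Lemma~\ref{lemma:(2,1)} with the map in (i).

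For (iii), I will produce a nonzero map $\nabla(2,2)\to\nabla(2,1)$ or $\nabla(2,2)\to\nabla(0,2)$ whose image has head $L(0,1)$.
\begin{itemize}
\item Since $(2,2)-(2,1)=(0,1)$ is not a root, I would go to $\nabla(0,2)$: $(2,2)-(0,2)=(2,0)=2\alpha+\beta+\ldots$ is not a simple root, so \cite[5.10]{Don} does not apply directly.
\item A cleaner route uses the simple head of $\nabla(2,2)$. Because $\nabla(2,2)=\St\otimes\nabla(1,1)^{\rm F}$ has a top section in $\St\otimes\St\supseteq T(2,2)$, I first compute its head. By the reasoning used for $\nabla(2,1)$, the head of $\nabla(2,2)$ should be $L(0,1)$. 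Indeed $\nabla(2,2)=\St\otimes\St^{\rm F}$ twisted appropriately, and its $G_1$-socle is $\St^{\rm F}$-related; I expect to verify this via $\nabla(2,2)\cong\nabla(1,1)\otimes\nabla(1,1)^{\rm F}$ restricted to $G_1$.
\item To get a map to $M$, I will compute $\Hom_G(\nabla(2,2),M)$ using the sequence $0\to L(2,0)\to M\to L(0,1)\to0$. This gives
\[0\to\Hom_G(\nabla(2,2),L(2,0))\to\Hom_G(\nabla(2,2),M)\to\Hom_G(\nabla(2,2),L(0,1))\to\Ext^1_G(\nabla(2,2),L(2,0)).\]
By duality, $\Hom_G(\nabla(2,2),L(\mu))=\Hom_G(L(\mu),\Delta(2,2))$ and $\Ext^1_G(\nabla(2,2),L(2,0))=\Ext^1_G(L(2,0),\Delta(2,2))$.
\end{itemize}
The hard part is to show that $L(0,1)$ lies in the head of $\nabla(2,2)$ and that a map onto $L(0,1)$ lifts to $M$.

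I would first try the tensor product structure. We have $\Delta(2,2)\cong\St\otimes\Delta(1,1)^{\rm F}\supseteq\St\otimes L(1,1)^{\rm F}=L(3,3)$, but this is not helpful for weights $(0,1)$. So instead I will use that $T(2,2)\mid \St\otimes\St$, that $\nabla(2,2)$ is the top section of $T(2,2)$, and the surjection $\St\otimes\St\to M$ coming from (iv). Combined with the fact that every map $T(2,2)\to\nabla(0,2)$ factors through $\nabla(2,2)$ (the kernel has a good filtration with weights not $\geq$ those needed), this should produce the map. Concretely: $\Hom_G(T(2,2),\nabla(0,2))=(T(2,2):\Delta(0,2))$, and maps killing the bottom part factor through $\nabla(2,2)$.

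For (iv), apply $\Hom_G(\St\otimes\St,-)$ to $0\to L(2,0)\to M\to L(0,1)\to0$. The term $\Hom_G(\St\otimes\St,L(2,0))=\Hom_G(\St,\St\otimes L(2,0))$, and $\St\otimes L(2,0)=\St\otimes L(1,0)^{\rm F}$ is $\nabla(3,1)$-like, so the Steinberg summand is $0$.

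Alternatively, use the exact sequence $0\to\ker\to\nabla(0,2)\to M\to0$. From Lemma~\ref{lem:dim.St}(i),(iii), $\Hom_G(\St\otimes\St,\nabla(0,2))$ has dimension $2$ and $\rad\nabla(0,2)$ contributes $1$. Since $\St\otimes\St$ is tilting and the kernel $U$ lies in $\rad\nabla(0,2)$, I would compute
\[\dim\Hom_G(\St\otimes\St,M)=\dim\Hom_G(\St\otimes\St,\nabla(0,2))-\dim\Hom_G(\St\otimes\St,U)+\dim H^1(G,\St\otimes\St\otimes U),\]
where the last term is an $\Ext^1$. The Steinberg-block analysis of $\St\otimes U$ as in Lemma~\ref{lem:dim.St}(iii) gives the needed dimensions: $\St\otimes U$ has Steinberg component $\nabla(1,3)$, contributing $1$ to Hom and $0$ to $H^1$. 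This yields $2-1+0=1$, and the main computation to check carefully is the vanishing of that $H^1$.
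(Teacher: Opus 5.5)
\textbf{Part (iii) is not proved.} Parts (i) and (ii) are fine, and they are what the paper means by ``follow directly'' from Lemmas~\ref{lemma:(0,2)} and~\ref{lemma:(2,1)}. The gap is in (iii): you never produce a surjection $\nabla(2,2)\to M$.

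Some of the structural facts you invoke are wrong. $\St\otimes\nabla(1,1)^{\rm F}\cong\nabla(1,1)\otimes\nabla(1,1)^{\rm F}$ is $\nabla(3,3)$, not $\nabla(2,2)$; likewise $\St\otimes\Delta(1,1)^{\rm F}\cong\Delta(3,3)$. Also $(0,1)=3\alpha+2\beta$ is a root (the highest root), though not a simple one.

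More seriously, getting (iii) from (iv) and $T(2,2)\mid\St\otimes\St$ is circular. Write $\St\otimes\St=D\oplus T(3,1)^2$. Before (iii) is known, $D$ could still be $T(2,2)\oplus T(2,1)$ or $T(2,2)\oplus T(2,1)^2$. By (ii), $T(2,1)\twoheadrightarrow\nabla(2,1)\twoheadrightarrow M$. So in those cases the one-dimensional space in (iv) comes from a $T(2,1)$ summand. That forces $\Hom_G(T(2,2),M)=0$, hence $\Hom_G(\nabla(2,2),M)=0$. Ruling this out is exactly what the paper uses (iii) for, in Lemma~\ref{lem:nosum} and Proposition~\ref{prop:tilt.dec}.

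Your claim that every map $T(2,2)\to\nabla(0,2)$ factors through $\nabla(2,2)$ is unproved. Nothing you say forces such maps to vanish on the kernel of $T(2,2)\to\nabla(2,2)$. Even granting it, you would still need the resulting map $\nabla(2,2)\to\nabla(0,2)$ to be onto. That means $L(0,1)$ lies in the head of $\nabla(2,2)$ with $L(2,0)$ beneath it, which is exactly the point you flag as hard and leave open.

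The paper gets (iii) from independent, explicit knowledge of the submodule structure of $\nabla(2,2)$, \cite[Appendix J, J.11]{Do}. Some input of that kind is the missing ingredient; equivalently, you need that $\Delta(2,2)$ contains the dual extension $M^*$.

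\textbf{Part (iv) works after a correction.} Your second argument is a valid, self-contained alternative to the paper's citation of \cite[\S 3.7]{BNPS2}. However, the block computation as written is wrong and reaches the right number only through two cancelling errors.
\begin{itemize}
\item The kernel $U$ of $\nabla(0,2)\to M$ has $L(0,0)$ at the top and $L(0,2)$ at the bottom. So the Steinberg block component of $\St\otimes U$ is an extension of $\St=\St\otimes L(0,0)$ by $\nabla(1,3)=\St\otimes L(0,2)$. The middle layer $T(2,1)\otimes\nabla(1,0)^{\rm F}$ contributes nothing.
\item This extension splits, because $\Ext^1_G(\Delta(1,1),\nabla(1,3))=0$.
\item Since $\Hom_G(\Delta(1,1),\nabla(1,3))=0$, the contribution $1$ to $\Hom$ comes from the summand $\St$, not from $\nabla(1,3)$.
\item $\Ext^1_G(\St,\St\oplus\nabla(1,3))=0$, because simple modules have no self-extensions.
\end{itemize}
With this correction, $\dim\Hom_G(\St\otimes\St,M)=2-1+0=1$ does follow.

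Your first attempt, via $0\to L(2,0)\to M\to L(0,1)\to 0$, is inconclusive on its own. It only shows the dimension is $1$ or $2$, since $\Ext^1_G(\St\otimes\St,L(2,0))\cong H^1(G,L(1,0))\neq 0$.
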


\begin{proof}
Parts (i) and (ii) follow directly from Lemma~\ref{lemma:(0,2)} and Lemma~\ref{lemma:(2,1)}. Part (iii) follows from \cite[Appendix J, J.11]{Do}. Part (iv) can be found in  \cite[\S 3.7]{BNPS2}. 
\end{proof}

Recall from Lemma~\ref{lem:ch.T(2,1)} that $T(2,1)$ has a Weyl filtration 
$$0=E_0\subseteq E_1\subseteq \ldots\subseteq E_4\subseteq E_5=T(2,1),$$
where $E_i/E_{i-1}=\Delta(\lambda^i)$, for $1\leq i\leq 5$, and
$$\lambda^1=(2,1),  \lambda^2=(0,2),  \lambda^3=(3,0),  \lambda^4=(2,0),  \lambda^5=(0,1)$$    
For ease of notation, from now we write $E$ for the tilting module $T(2,1)$.

\begin{Lemma}\label{lem:noM}
For $1\leq i\leq 3$ we have that $\Hom_G(E_i,M)=0$.
\end{Lemma}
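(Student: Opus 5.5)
Since $\Hom_G(-,M)$ is left exact, it suffices to show $\Hom_G(\Delta(\lambda^i),M)=0$ for $i=1,2,3$ and then induct up the Weyl filtration. Concretely, the exact sequence $0\to E_{i-1}\to E_i\to \Delta(\lambda^i)\to 0$ gives
$$0\to \Hom_G(\Delta(\lambda^i),M)\to \Hom_G(E_i,M)\to \Hom_G(E_{i-1},M),$$
so vanishing for $E_{i-1}$ together with vanishing for $\Delta(\lambda^i)$ yields vanishing for $E_i$. Starting from $E_0=0$, the claim for $1\le i\le 3$ reduces to three statements:
$$\Hom_G(\Delta(2,1),M)=\Hom_G(\Delta(0,2),M)=\Hom_G(\Delta(3,0),M)=0.$$

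For each $\lambda$, a nonzero map $\Delta(\lambda)\to M$ has image with simple head $L(\lambda)$, because $\Delta(\lambda)$ has simple head $L(\lambda)$. The image is a nonzero submodule of $M$, whose composition factors are only $L(2,0)$ and $L(0,1)$. So $L(\lambda)$ would have to be one of these two, which forces $\lambda\in\{(2,0),(0,1)\}$. None of $(2,1),(0,2),(3,0)$ lies in this set, so all three Hom spaces vanish and the lemma follows.

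There is essentially no obstacle. The only point to get right is that we use the filtration from the bottom, i.e.\ $E_1,E_2,E_3$ have Weyl sections $\Delta(2,1),\Delta(0,2),\Delta(3,0)$, exactly as listed before the lemma. The argument deliberately stops at $i=3$, because $\Delta(2,0)$ and $\Delta(0,1)$ do map nontrivially to $M$. In particular, $\Delta(0,1)\to L(0,1)$ need not factor through $M$, but $\Delta(2,0)\to L(2,0)\subseteq M$ is certainly nonzero.
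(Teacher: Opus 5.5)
Your proof is correct and follows the paper's argument. Both reduce, via left exactness of $\Hom_G(-,M)$ along the Weyl filtration, to showing $\Hom_G(\Delta(\lambda^i),M)=0$ for $i\le 3$; the only cosmetic difference is that the paper uses that $\Delta(\lambda^i)$ is generated by a vector of weight $\lambda^i$, which is not a weight of $M$, whereas you use that the simple head $L(\lambda^i)$ is not a composition factor of $M$, and either justification works.
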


\begin{proof}
Suppose for contradiction that the result fails and let $i$ be minimal such that  $\Hom_G(E_i,M)\neq0$.  Therefore $\Hom_G(E_i/E_{i-1},M)\neq0$. However, this is impossible since $E_i/E_{i-1}$ is generated by a vector of weight $\lambda^i$ and $\lambda^i$ is not a weight of $M$.
\end{proof}

\begin{Lemma}\label{lem:nosum}
Let $\phi:T(2,1)\rightarrow D$ be an injective $G$-homomorphism. For $0\leq i\leq 2$, let $V_i=\phi(E_i)$ and let $V=\phi(T(2,1))$. The submodule $V/V_i$ is not a direct summand of $D/V_i$.
\end{Lemma}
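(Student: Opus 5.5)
Suppose, for a contradiction, that $V/V_i$ is a direct summand of $D/V_i$ for some $0\leq i\leq 2$, say $D/V_i=V/V_i\oplus C/V_i$ with $V_i\subseteq C\subseteq D$. The idea is to manufacture a nonzero homomorphism $D\to M$ which kills $V_i$ and is nonzero on $V$, and then to compare dimensions with Lemma~\ref{lem:surjM}(iv) and Lemma~\ref{lem:noM}.

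Since $E=T(2,1)$ has a good filtration with top section $\nabla(2,1)$, Lemma~\ref{lem:surjM}(ii) gives a surjection $\theta\colon E\to M$. By Lemma~\ref{lem:noM}, $\theta$ vanishes on $E_i$ for $i\leq 3$, and in particular on $E_i$ for $i\leq2$. Hence $\theta\circ\phi^{-1}$ defines a surjection $V/V_i\to M$. Using the decomposition $D/V_i=V/V_i\oplus C/V_i$, we extend this by zero on $C/V_i$ and compose with $D\to D/V_i$. This produces $\psi\colon D\to M$ with $\psi|_V\neq0$ and $\psi(V_i)=0$.

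Next we bring in $\St\otimes\St$. We have $\St\otimes\St=D\oplus T(3,1)^2$, and $M$ lies in the principal block because $L(0,1)$ and $L(2,0)$ are linked to $0$ by Lemma~\ref{lem.ext.simples}. Therefore $\Hom_G(\St\otimes\St,M)=\Hom_G(D,M)$, and by Lemma~\ref{lem:surjM}(iv) this space is one dimensional.

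The key step is to produce a second, independent homomorphism $D\to M$, or alternatively to show that every homomorphism $D\to M$ is forced to vanish on $V$. My first attempt is the second route. The tilting module $T(2,2)$ is a summand of $D$ with top good-filtration section $\nabla(2,2)$, so Lemma~\ref{lem:surjM}(iii) gives a surjection $\eta\colon D\to T(2,2)\to\nabla(2,2)\to M$. By the one-dimensionality, $\psi$ is a scalar multiple of $\eta$. It then remains to show that $\eta\circ\phi=0$, that is, that the surjection from $D$ onto $M$ coming through $T(2,2)$ kills the image of $T(2,1)$. The corresponding map $E\to M$ has the form $\Hom$ into the summand $T(2,2)$, followed by the projection $T(2,2)\to\nabla(2,2)$, followed by the map to $M$.

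If $D=T(2,2)$, then $\phi$ embeds $T(2,1)$ into $T(2,2)$. Consider the composite $T(2,1)\to T(2,2)\to\nabla(2,2)$. A homomorphism $T(2,1)\to\nabla(2,2)$ is determined by the Weyl filtration multiplicity $(E:\Delta(2,2))=0$, so $\Hom_G(E,\nabla(2,2))=0$. Hence the composite is zero, which gives $\eta\circ\phi=0$ and contradicts $\psi|_V\neq0$.

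If $D=T(2,2)\oplus T(2,1)$, the same argument applies to the $T(2,2)$-component of $\eta$. The one-dimensionality of $\Hom_G(D,M)$ then forces $\Hom_G(T(2,1),M)\subseteq \Hom_G(D,M)$ to be accounted for as well. This is a problem, since $\theta$ already gives a nonzero element of $\Hom_G(T(2,1),M)$, and combined with $\eta$ we would get two independent maps $D\to M$, contradicting dimension one.

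The main obstacle is therefore the bookkeeping of which summand carries the map to $M$. I will organise the argument by the dimension count: $\Hom_G(D,M)$ contains $\eta$, which is zero on $V$ since $\Hom_G(T(2,1),\nabla(2,2))=0$ factors through every component, and it also contains $\psi$, which is nonzero on $V$. So $\dim\Hom_G(D,M)\geq2$, and this contradiction handles all cases uniformly.
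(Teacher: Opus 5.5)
Your argument is correct and is essentially the paper's. Both produce two independent elements of $\Hom_G(D,M)$, contradicting Lemma~\ref{lem:surjM}(iv): one is the surjection $T(2,1)\to\nabla(2,1)\to M$, which kills $E_i$ by Lemma~\ref{lem:noM}, placed on the summand $V/V_i$; the other factors through $D\to\nabla(2,2)\to M$ and vanishes on $V$ because $L(2,2)$ is not a composition factor of $T(2,1)$. Your final uniform paragraph, which separates $\psi$ and $\eta$ by their restrictions to $V$, is all that is needed, and the intermediate case split on the possible shapes of $D$ is superfluous.
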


\begin{proof}
Suppose, for a contradiction, that it is. Thus we may write $D = X + V$, for some submodule $X$ of $D$ such that $X \cap V = V_i$. Recall that we have an epimorphism $\pi:D\rightarrow \nabla(2,2)$ and so $\nabla(2,2)=\pi(X)+\pi(V)$. Now, the restriction
$\pi|_V : V \to \nabla(2,2)$ is the zero map, since $L(2,2)$ is not a composition factor of $V$. Therefore, by Lemma~\ref{lem:surjM}(iii) we obtain a surjective map $X\to M$. Moreover, by Lemma~\ref{lem:noM} we have that this map is zero on $V_i$ and so $\Hom_G(X/V_i,M)\neq 0$. Similarly, by Lemma~\ref{lem:noM}, an epimorphsim from $V$ to $M$ is zero on $V_i$ and so $\Hom_G(V/V_i,M)\neq 0$. Now the epimorphism 
$$X\oplus V\rightarrow X+V\rightarrow D/V_i$$
has kernel $V_i\oplus V_i$ and so it induces an isomorphism $X/V_i\oplus V/V_i\cong D/V_i$. It follows that $\dim \Hom_G(D/V_i, M)\geq 2$ and so $\dim \Hom_G(D, M)\geq 2$. But this is impossible since $\dim\Hom_G(D,M )=1$, by Lemma~\ref{lem:surjM}(iv).
\end{proof}

 \begin{Proposition}\label{prop:tilt.dec}
 The decomposition of $\St\otimes \St$ into tilting modules is 
 $$\St\otimes \St=T(2,2)\oplus T(3,1)^2$$
 \end{Proposition}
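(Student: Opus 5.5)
We have already reduced to showing $D=T(2,2)$, where $D$ is the principal block component of $\St\otimes\St$, and the remaining alternatives are $D=T(2,2)\oplus T(2,1)$ or $D=T(2,2)\oplus T(2,1)^2$. Suppose for a contradiction that $T(2,1)$ is a direct summand of $D$. Then there is an injective $G$-homomorphism $\phi\colon T(2,1)\to D$ whose image $V=\phi(T(2,1))$ is a direct summand of $D$, say $D=X\oplus V$. Put $V_0=\phi(E_0)=0$. Then $V/V_0=V$ is a direct summand of $D/V_0=D$. This contradicts Lemma~\ref{lem:nosum} with $i=0$, so $D=T(2,2)$. Combining this with $\St\otimes\St=D\oplus T(3,1)^2$ gives the stated decomposition.

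Lemma~\ref{lem:nosum} was stated for $0\le i\le 2$, which suggests the authors intend a slightly more delicate argument. Perhaps they use a $\phi$ coming from Corollary~\ref{cor:emb} that need not split, and later use the cases $i=1,2$ for the construction of $Q_1(0)$. For the present proposition, however, the case $i=0$ is all we need. The only point to check is that the hypotheses of Lemma~\ref{lem:nosum} are met by a split embedding.

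There is one subtlety. Corollary~\ref{cor:emb} gives an embedding of $T(2,1)$ into $\St\otimes\St$, not into $D$. Since $T(2,1)$ lies in the principal block (it has composition factor $L(0,0)$ and is indecomposable), any such embedding lands in $D$. In our case, though, we take $\phi$ to be the inclusion of a direct summand of $D$ isomorphic to $T(2,1)$, so this is automatic.

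The main obstacle, if any, is making sure that the proof of Lemma~\ref{lem:nosum} with $i=0$ really only uses $\dim\Hom_G(D,M)=1$ together with surjections from both $X$ and $V$ onto $M$. The surjection from $V$ comes from $T(2,1)\to\nabla(2,1)\to M$, using Lemma~\ref{lem:surjM}(ii). The surjection from $X$ comes from $D\to\nabla(2,2)\to M$, which vanishes on $V$, using Lemma~\ref{lem:surjM}(iii). These two maps give two independent homomorphisms $D\to M$, which is the contradiction. Since that argument is already recorded, the proof reduces to the short deduction above.
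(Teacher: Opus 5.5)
Your proposal is correct and is the paper's own argument. The paper also applies Lemma~\ref{lem:nosum} with $i=0$ to an embedding of $T(2,1)$ as a direct summand of $D$, which rules out $T(2,1)$ as a summand and forces $D=T(2,2)$. You are also right that the cases $i=1,2$ of that lemma are only needed later, in the construction of the $G$-structure on $Q_1(0)$ in Section~\ref{sec:struct}.
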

 
 \begin{proof}
Setting $i=0$ in the previous lemma we obtain that $T(2,1)$ is not a direct summand of $D$ and so we obtain that $D=T(2,2)$. This together with our comments above  provide the desired decomposition.
\end{proof}
 
 By Proposition~\ref{prop:surj} and Proposition~\ref{prop:tilt.dec} we deduce that
 
  \begin{Corollary}\label{cor:surj}
 There is a surjective $G$-homomorphism from $T(2,2)$ to $T(2,1)$.
\end{Corollary}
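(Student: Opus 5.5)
Start from the surjection $\psi\colon \St\otimes\St\to T(2,1)$ of Proposition~\ref{prop:surj}, and use the block decomposition $\St\otimes\St=T(2,2)\oplus T(3,1)^2$ of Proposition~\ref{prop:tilt.dec}. Since $T(2,1)$ has simple head $L(0,1)$, which lies in the principal block, $T(2,1)$ itself lies entirely in the principal block. Morphisms between modules in different blocks vanish, so $\psi$ restricted to each summand $T(3,1)$, which lies in the Steinberg block, is zero. To justify the block assignment of $T(3,1)$, recall that $\St\otimes T(1,0)^F=T(3,1)$ and that the Steinberg block component was computed this way in the proof of Lemma~\ref{lem:dim.St}.

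It follows that $\psi=\psi|_{T(2,2)}\circ \mathrm{pr}$, where $\mathrm{pr}$ is the projection onto the summand $D=T(2,2)$. The image of $\psi$ therefore equals $\psi(T(2,2))$. Since $\psi$ is surjective, the restriction $\psi|_{T(2,2)}\colon T(2,2)\to T(2,1)$ is surjective, which is the required map.

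The only point that needs care is the block membership. Proposition~\ref{prop:tilt.dec} was derived as $\St\otimes\St=D\oplus T(3,1)^2$, where $D$ is by definition the principal block component. So the decomposition into blocks is already built into that statement, and the argument reduces to the fact that $\Hom_G$ between distinct blocks is zero. I do not expect a genuine obstacle here.
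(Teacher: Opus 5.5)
Your proposal is correct and is the paper's argument. The paper deduces the corollary directly from Proposition~\ref{prop:surj} and Proposition~\ref{prop:tilt.dec}. The surjection $\St\otimes\St\to T(2,1)$ must vanish on the two summands $T(3,1)$, which lie outside the principal block, so its restriction to $T(2,2)$ is already onto.
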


\section{Humphreys-Verma conjecture}\label{sec:struct}
In this section we produce a $G$-structure for the injective  $G_1$-module $Q_1(0)$ and in this way we verify the Humphreys-Verma conjecture for $G_2$ and $p=2$. This module will be obtained as a  sub-quotient of the tilting module $T(2,2)$.  First, we outline our strategy: 

From Corollary~\ref{cor:surj} we have a surjective $G$-homomorphism from  $T(2,2)$ to $T(2,1)$.  We fix a submodule $N$ of $T(2,2)$ such that $T(2,2)/N$ is isomorphic to $T(2,1)$.
Thus we have  $N|_{G_1}=Q_1(0)\oplus Q_1(0,1)$ and  $\soc_G N=L(0,0)\oplus L(0,1)$.  We show that $T(2,1)$ embeds into $N$ and so the $G$-quotient  $Q\coloneqq N/V$, where $V$ is a submodule $N$ isomorphic to $T(2,1)$, has the property $Q|_{G_1}=Q_1(0)$.

\begin{Lemma}\label{lem:ext}
We have 
 \begin{enumerate}[label = (\roman*), font=\normalfont]
\item $\Ext^1_G(\Delta(0,1),N)=0$, and 
 \item $\Ext^1_G(\Delta(2,0),N)=0.$
 \end{enumerate}
 \end{Lemma}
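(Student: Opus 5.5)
We use the defining short exact sequence
$$0\rightarrow N\rightarrow T(2,2)\rightarrow T(2,1)\rightarrow 0$$
and apply $\Hom_G(\Delta(\lambda),-)$ for $\lambda=(0,1)$ and $\lambda=(2,0)$. Since $T(2,2)$ has a good filtration, $\Ext^1_G(\Delta(\lambda),T(2,2))=0$. The long exact sequence then gives
$$\Hom_G(\Delta(\lambda),T(2,2))\longrightarrow \Hom_G(\Delta(\lambda),T(2,1))\longrightarrow \Ext^1_G(\Delta(\lambda),N)\longrightarrow 0 .$$
So it suffices to show that every homomorphism $\Delta(\lambda)\to T(2,1)$ lifts through the surjection $T(2,2)\to T(2,1)$. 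Equivalently, the induced map on $\Hom$-spaces must be surjective. Since $\Ext^1_G(\Delta(\lambda),T(2,1))=0$ as well, a dimension count suffices:
$$\dim\Hom_G(\Delta(\lambda),N)=\dim\Hom_G(\Delta(\lambda),T(2,2))-\dim\Hom_G(\Delta(\lambda),T(2,1)),$$
and surjectivity holds if and only if this equality holds.

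The $\Hom$-dimensions into tilting modules are good filtration multiplicities. By Lemma~\ref{lem:ch.T(2,1)}(iii), $\dim\Hom_G(\Delta(0,1),T(2,1))=1$ and $\dim\Hom_G(\Delta(2,0),T(2,1))=1$. For $T(2,2)$ we use Proposition~\ref{prop:tilt.dec} together with the character in Lemma~\ref{lem:ch.stst}(ii). The summands $T(3,1)$ lie in the Steinberg block, so the principal-block $\nabla$-multiplicities of $\St\otimes\St$ are exactly those of $T(2,2)$. This gives $(T(2,2):\nabla(0,1))=2$ and $(T(2,2):\nabla(2,0))=2$. Hence we must show $\dim\Hom_G(\Delta(0,1),N)=1$ and $\dim\Hom_G(\Delta(2,0),N)=1$.

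To compute these dimensions we use the $G_1$-structure of $N$. We have $N|_{G_1}=Q_1(0)\oplus Q_1(0,1)$ and $\soc_GN=L(0,0)\oplus L(0,1)$. The head of $\Delta(0,1)$ is $L(0,1)$. A map $\Delta(0,1)\to N$ with image having $L(0,1)$ as head is controlled by $\Hom_G(\Delta(0,1),N)\hookrightarrow\Hom_{G_1}$-data. The plan is to bound $\dim\Hom_G(\Delta(\lambda),N)\le 1$ via the socle of $N$:
\begin{itemize}
\item Any nonzero map from $\Delta(\lambda)$ has image with simple head $L(\lambda)$ and socle inside $\soc_G N$.
\item For two independent maps, some combination would have image avoiding one socle summand.
\end{itemize}
One would then argue with the weights and the block structure, using Lemma~\ref{lem.ext.simples}, that this is impossible. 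The lower bound $\ge 1$ is automatic from the displayed exact sequence, since the dimension difference is $2-1=1$.

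The main obstacle is this upper bound, i.e.\ ruling out a second map. I would try first the alternative route of computing $\Ext^1_G(\Delta(\lambda),N)$ directly via the $5$-term sequence with $G_1$. Since $N$ is $G_1$-injective, $H^1(G_1,\nabla(-w_0\lambda)\otimes N)=0$, so
$$\Ext^1_G(\Delta(\lambda),N)=H^1\bigl(G/G_1,(\nabla(\lambda)\otimes N)^{G_1}\bigr),$$
using $-w_0=1$ for $G_2$. Because $\nabla(\lambda)\otimes N$ is $G_1$-injective, its $G_1$-fixed points are $(\soc_{G_1}(\nabla(\lambda)\otimes N))$-related and are computable from $\Hom_{G_1}(L(\mu),\nabla(\lambda)\otimes N)$. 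One then reduces to showing that a Frobenius-untwisted $G$-module (built from $\Hom_{G_1}(L(\mu)\otimes\nabla(\lambda)^*,N)$) has good filtration or vanishing $H^1$. I expect identifying this fixed-point module, using $\soc_GN$ and Lemma~\ref{lem.coh.simples}, to be the delicate step.
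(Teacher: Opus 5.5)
\textbf{Verdict: there is a gap.} Neither of your two routes is carried through; each stops at the step that contains the actual content of the lemma.

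\textbf{Route 1.} The reduction is correct. From $0\to N\to T(2,2)\to T(2,1)\to 0$ and the multiplicities $(T(2,2):\nabla(\lambda))=2$, $(T(2,1):\nabla(\lambda))=1$, the vanishing of $\Ext^1_G(\Delta(\lambda),N)$ is equivalent to $\dim\Hom_G(\Delta(\lambda),N)\le 1$.

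For (i) this bound is immediate, but only via an observation you did not make: in characteristic $2$ one has $\Delta(0,1)=L(0,1)$. This follows from the dimension count in Lemma~\ref{lem.good.filtrations}(ii), namely $7\dim L(0,1)=27+7+64$, and the paper uses it tacitly. Since $L(0,1)$ occurs once in $\soc_GN$, the bound follows.

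For (ii) your socle heuristic cannot work. $\Delta(2,0)$ has both $\Delta(1,0)^{\mathrm F}$ (socle $L(0,0)$) and $M^*$ (socle $L(0,1)$) as quotients, the latter because $M\subseteq\nabla(2,0)$. So knowing $\soc_GN=L(0,0)\oplus L(0,1)$ does not rule out two independent maps $\Delta(2,0)\to N$. You would need to know how $L(2,0)$ sits above the socle of $N$, which is exactly the $G_1$-information. The upper bound is therefore no easier than the lemma itself.

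\textbf{Route 2.} This is the paper's method, but you leave the identification of $(\nabla(\lambda)\otimes N)^{G_1}$ as the ``delicate step'' and never do it. The missing idea is not to treat $\nabla(2,0)\otimes N$ in one piece. Instead:
\begin{enumerate}
\item Use the filtration $0\to L(1,0)\oplus L(0,1)\to\Delta(2,0)\to\Delta(1,0)^{\mathrm F}\to 0$.
\item Since $N|_{G_1}=Q_1(0)\oplus Q_1(0,1)$ is injective with $G_1$-socle $L(0)\oplus L(0,1)$, all the groups $H^1(G_1,-\otimes N)$ vanish, and
\begin{equation*}
(L(1,0)\otimes N)^{G_1}=0,\qquad (L(0,1)\otimes N)^{G_1}=\mathbb F,\qquad (\nabla(1,0)^{\mathrm F}\otimes N)^{G_1}=\nabla(1,0)^{\mathrm F}\otimes N^{G_1}=\nabla(1,0)^{\mathrm F}.
\end{equation*}
\item The $5$-term sequence then gives $\Ext^1_G$ of the three sections against $N$ as $0$, $H^1(G,\mathbb F)=0$ and $H^1(G,\nabla(1,0))=0$.
\end{enumerate}
Part (ii) follows from the long exact sequence, and part (i) is the middle case, using $\Delta(0,1)=L(0,1)$.

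Equivalently, in your own formulation: exactness of $\Hom_{G_1}(-,N)$ applied to the filtration shows $(\nabla(2,0)\otimes N)^{G_1}=X^{\mathrm F}$ with $0\to\nabla(1,0)\to X\to\mathbb F\to 0$. Hence $H^1(G,X)=0$. Your anticipated ``delicate step'' becomes a two-line computation once this filtration of $\Delta(2,0)$ is used.
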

 
 \begin{proof}
 Since $N|_{G_1}=Q_1(0)\oplus Q_1(0,1)$, we have that $H^0(G_1,L(0,1)\otimes N)=L(0,0)$ and $H^1(G_1,L(0,1)\otimes N)=0$. Therefore, by the 5-term exact sequence we obtain 
 $$\Ext^1_G(L(0,1),N)=H^1(G,L(0,1)\otimes N)=H^1(G,L(0,0))=0$$
Now for the second part, recall that $\Delta(2,0)$ has a filtration with sections $\Delta(1,0)^F, L(0,1)$ and $L(1,0)$. It is enough to show then that $N$ does not extend non-trivially with any of these modules. We have already dealt with $L(0,1)$ in the previous step. Moreover,  using the $5$-term exact sequence, we immediately get that $\Ext^1_G(L(1,0),N)=0$. Now, note that  $H^0(G_1,\nabla(1,0)^F\otimes N)=\nabla(1,0)^F$ and   $H^1(G_1,\nabla(1,0)^F\otimes N)=0$. Therefore, again  by the $5$-term exact sequence, we obtain  
 $$\Ext^1_G(\Delta(1,0)^F,N)=H^1(G,\nabla(1,0)^F\otimes N)=H^1(G,\nabla(1,0))=0$$
 It follows that $\Ext^1_G(\Delta(2,0),N)=0$.
 \end{proof}

Recall the filtration $0=E_0\subseteq E_1\subseteq \ldots\subseteq E_4\subseteq E_5=T(2,1)$ of $T(2,1)$ from the previous section.  We note that each $E_i$ is characteristic in $E$, in the sense that it is preserved by any endomorphism. To see this we note that, for $1\leq i\leq 5$, the quotient $E/E_{i-1}$ has unique highest weight $\lambda^i$, and this occurs with multiplicity one. Thus if $v_i\in E\backslash E_{i-1}$ is any vector of weight $\lambda^i$, then $E_i$ is generated by $E_{i-1}$ and $v_i$. Now if  $\theta$ is an endomorphism of $E$ then $\theta(E_i)$ is generated by $\theta(E_{i-1})$ and $\theta(v_i)$.  We can assume inductively that $\theta(E_{i-1})\subseteq E_{i-1}$ and then either $\theta(v_i)\in E_{i-1}$ or is a non-zero vector in $E\backslash E_{i-1}$ of weight $\lambda^i$. Thus $\theta(v_i)\in E_i$ and $\theta(E_i)\subseteq E_i$.

We have the following key result.
 
\begin{Proposition}\label{prop:emb.N}
 For $0\leq i\leq 2$, any embedding of the $G$-module $E_i$ into $N$ extends to an embedding of $E_{i+1}$ into $N$.
 \end{Proposition}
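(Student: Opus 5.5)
Fix $0\leq i\leq 2$ and an embedding $\phi:E_i\to N$. The plan is to use the extension vanishing of Lemma~\ref{lem:ext} to lift $\phi$ along $E_i\subseteq E_{i+1}$, and then check that the lift is injective. The successive sections are $E_{i+1}/E_i\cong\Delta(\lambda^{i+1})$ with $\lambda^1=(2,1)$, $\lambda^2=(0,2)$, $\lambda^3=(3,0)$.

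We apply $\Hom_G(-,N)$ to the short exact sequence
$$0\to E_i\to E_{i+1}\to \Delta(\lambda^{i+1})\to 0.$$
This gives the exact sequence
$$\Hom_G(E_{i+1},N)\to \Hom_G(E_i,N)\to \Ext^1_G(\Delta(\lambda^{i+1}),N).$$
So $\phi$ extends to some $\psi:E_{i+1}\to N$ as soon as $\Ext^1_G(\Delta(\lambda^{i+1}),N)=0$. For $i=0$ this is automatic, since $E_0=0$, but we still need an injective map $E_1=\Delta(2,1)\to N$. In every case the real work is twofold: the $\Ext$ vanishing, and the injectivity of $\psi$.

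For the vanishing we argue as in Lemma~\ref{lem:ext}. Since $N|_{G_1}=Q_1(0)\oplus Q_1(0,1)$ is $G_1$-injective, $H^1(G_1,X\otimes N)=0$ for any $X$. The 5-term sequence therefore gives
$$\Ext^1_G(\Delta(\mu),N)=H^1\bigl(G/G_1,(\nabla(\mu^*)\otimes N)^{G_1}\bigr).$$
We then filter $\Delta(\lambda^{i+1})$ by twisted pieces as Lemma~\ref{lem:ext} does for $\Delta(2,0)$, using Lemmas~\ref{lemma:(0,2)} and \ref{lemma:(2,1)} (dualised) and the fact that each $\Delta(\mu)$ is contravariantly dual to $\nabla(\mu)$. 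For example, $\Delta(0,2)$ has sections $L(0,2)$, $L(1,0)\otimes\Delta(1,0)^F$, $\nabla(1,0)^F$ and $L(0,1)$. For a section of the form $L(\nu)\otimes Y^F$ with $\nu$ restricted, the $G_1$-fixed points of $L(\nu)\otimes Y^{F*}\otimes N$ are $Y^{F*}$ when $\nu\in\{0,(0,1)\}$ and $0$ otherwise. The relevant groups thus reduce to $H^1(G,Y^*)$ with $Y\in\{\Delta(1,0),L(1,0),\nabla(1,0),T(1,0)\}$, or to $0$. These $H^1$ are not all zero, since $H^1(G,L(1,0))\neq 0$. So we must instead run the long exact sequence through the filtration, where the connecting maps kill the bad contributions, exactly as $\nabla(1,0)$ rather than $L(1,0)$ appears in Lemma~\ref{lem:ext}. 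This gives $\Ext^1_G(\Delta(\lambda^{i+1}),N)=0$ for $\lambda^{i+1}=(0,2),(3,0)$.

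For injectivity we use the socles. Since $\phi$ is injective on $E_i$ and $\soc_G E_{i+1}$ lies in $E_i$ for $i\geq1$ (one checks $\soc E_{i+1}=L(2,1)$ via $E_{i+1}\subseteq T(2,1)$, whose socle is $L(0,1)$, which lies in $\Delta(2,1)$), $\ker\psi\cap E_i=0$ forces $\ker\psi=0$. For $i=0$ we need a map $\Delta(2,1)\to N$ that is nonzero on the socle $L(0,1)$; note that $\soc_G N\supseteq L(0,1)$. We obtain it from $\Hom_G(\Delta(2,1),N)$, computed from $0\to N\to T(2,2)\to T(2,1)\to 0$. Both tilting modules contain $\nabla(2,1)$ in a good filtration: three times in $T(2,2)$ (Lemma~\ref{lem:ch.stst}) and once in $T(2,1)$. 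With $\Ext^1$ from a Weyl module into a tilting module vanishing, this gives $\dim\Hom_G(\Delta(2,1),N)\geq 2$. A nonzero map $\Delta(2,1)\to N$ is either injective or factors through a proper quotient, and Lemma~\ref{lem:nosum} is the tool to rule out the degenerate maps.

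The main obstacle is the case $i=2$, that is, $\Ext^1_G(\Delta(3,0),N)=0$. Here the filtration of $\Delta(3,0)$ has sections $L(1,0)\otimes\Delta(1,0)^F$, $L(0,0)$ and $L(1,0)$ (and so on), and an $H^1(G,L(1,0))$ term survives. I would first try to kill it by showing that the connecting homomorphism is an isomorphism, using the known non-split structure of $T(1,0)$. If that fails, I would use Lemma~\ref{lem:nosum}: a nonzero obstruction class would split off $V/V_2$ in $D/V_2$, which that lemma forbids.
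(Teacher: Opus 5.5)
\textbf{The gap.} Your lifting step rests on $\Ext^1_G(\Delta(\lambda^{i+1}),N)=0$, and this vanishing is false.

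You do not actually establish it for $(0,2)$ or $(3,0)$. The claim that connecting maps ``kill the bad contributions'' cannot be checked from your filtration, because the way the twisted pieces are glued inside $\Hom_{G_1}(\Delta(\mu),N)$ depends on the unknown $G$-structure of $N$.

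It also fails outright. Since $\Ext^1_G(\Delta(\mu),D)=0$ for $D=T(2,2)$, the sequence $0\to N\to D\to E\to 0$ identifies $\Ext^1_G(\Delta(\mu),N)$ with the cokernel of $\Hom_G(\Delta(\mu),D)\to\Hom_G(\Delta(\mu),E)$. For every $\mu=\lambda^j$ the target here is one-dimensional.
\begin{itemize}
\item For $\mu=(2,1)$ the map is zero. A lift $h$ of the inclusion $E_1\hookrightarrow E$ would extend to $\tilde h\colon E\to D$. Then $\pi\tilde h$ fixes a highest weight vector, so it is not nilpotent, hence invertible in the local ring $\End_G(E)$. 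So $E$ would split off the indecomposable $D$. Thus $\Ext^1_G(\Delta(2,1),N)\cong\mathbb F$.
\item At your step $i=1$ the same thing happens. Take $V\cong T(2,1)$ inside $N$ and $Q=N/V$ as in Theorem~\ref{the:struct}; this is used only to test your claim, not to prove the proposition. Apply the exact functor $\Hom_{G_1}(-,Q)$ to the dual of the filtration in Lemma~\ref{lemma:(0,2)}. This gives $\Hom_{G_1}(\Delta(0,2),Q)\cong Z^F$ with $0\to L(0,1)\to Z\to\Delta(1,0)\to 0$.
\item Since $H^1(G,L(0,1))=0$, we get $\dim\Hom_G(\Delta(0,2),Q)=\dim Z^G=1$.
\item Since $\Ext^1_G(\Delta(0,2),V)=0$, this gives $\dim\Hom_G(\Delta(0,2),N)=2=\dim\Hom_G(\Delta(0,2),D)$, and therefore $\Ext^1_G(\Delta(0,2),N)\cong\mathbb F$.
\end{itemize}
So no vanishing theorem can produce the extension. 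You must show that the obstruction class of the particular embedding $\phi$ vanishes.

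\textbf{The missing idea.} The paper lifts into $D$ rather than into $N$, where the vanishing is automatic.
\begin{itemize}
\item Since $E/E_i$ has a Weyl filtration, $\phi$ extends to $\psi\colon E\to D$. This is injective by the socle when $i>0$, and is chosen injective via Corollary~\ref{cor:emb} when $i=0$.
\item Since $V_i=\phi(E_i)\subseteq N$, a surjection $\pi\colon D\to E$ with kernel $N$ induces $\bar\pi\colon D/V_i\to E/E_i$.
\item If $\bar\psi\bar\pi$ were not nilpotent, Fitting's lemma would make $V/V_i$ a direct summand of $D/V_i$, contradicting Lemma~\ref{lem:nosum}. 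Hence $\bar\pi\bar\psi$ is nilpotent on $E/E_i$.
\item The $\lambda^{i+1}$-weight space of $E/E_i$ is one-dimensional and spanned by a generator of $E_{i+1}$ modulo $E_i$. So nilpotency forces $\bar\pi\bar\psi(E_{i+1}/E_i)=0$, that is, $\psi(E_{i+1})\subseteq N$.
\end{itemize}
Your closing remark about Lemma~\ref{lem:nosum} points in this direction, but it is not an argument.

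This mechanism also repairs your $i=0$ case. There, $\dim\Hom_G(\Delta(2,1),N)\geq 2$ does not give an injective map, and Lemma~\ref{lem:nosum} does not ``rule out degenerate maps''. Instead, every map $\Delta(2,1)\to D$ lands in $N$ by the splitting argument above, and an injective one exists by Corollary~\ref{cor:emb}.

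A minor point: $\soc_G E_{i+1}=L(0,1)$, not $L(2,1)$. Otherwise your injectivity argument for $i\geq 1$ is fine.
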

 
 \begin{proof}
We write $D$ for $T(2,2)$ and $E$ for $T(2,1)$. For $0\leq i\leq 2$, let $\phi:E_i\to D$ an embedding with image $\phi(E_i)=V_i$, say, contained in $N$. The quotient $E/E_i$ has a Weyl filtration and so $\Ext^1_G(E/E_i,D)=0$. Hence, the map 
$$\Hom_G(E,D)\rightarrow \Hom_G(E,E_i)$$
is a surjection and so $\phi$ extends to a homomorphism $\psi: E\to D$. If $i>0$ then this map is injective, since $E$ has simple $G$-socle $L(0,1)$ that is contained in $E_i$. On the other hand, if $i=0$, then we can (and we do) choose  $\psi$ to be an injective homomorphism by Corollary~\ref{cor:emb}. We set   $V_{i+1}\coloneqq \psi (E_{i+1})$. Now we have that $\psi(E_i)=\phi(E_i)=V_i$ and so ${\bar \psi}$  induces a $G$-homomorphism $\bar{\psi}:E/E_i\rightarrow D/V_i$. 

Now let $\pi:D\to E$ be any surjective $G$-homomorphism with kernel $N$.  The composite $\pi\circ \phi$ defines an endomorphism of $E$ and, since $E_i$ is characteristic, we have $\pi(\phi(E_i))\subseteq E_i$, i.e., $\pi(V_i)\subseteq E_i$. Thus we get an induced map $\bar{\pi}:D/V_i\to E/E_i$. 

We consider the composite map $\theta\coloneqq \bar{\psi}\circ  \bar{\pi}\in \End_G(D/V_i)$. We set $V\coloneqq \psi (E)$. We have $\im \theta \subseteq V/V_i$, and by Fitting's Lemma, $\bigcap_{i\geq1}\im \theta^i$ is a direct summand of $D/V_i$. Now, by Lemma~\ref{lem:nosum}, it is not $V/V_i$ and since $V/V_i$ is indecomposable (it has simple head  $L(0,1)$), it is not a proper summand of $V/V_i$. Thus $\bar{\psi}\circ  \bar{\pi}$ is nilpotent. But then, so is $\theta=\bar{\pi}\circ \bar{\psi}$.  Now $E_{i+1}$ is generated by $E_i$ and a non-zero vector $v_{i+1}\in E_{i+1}\backslash E_i$   of weight $\lambda^{i+1}$.   Moreover,  ${\mathbb F} v_{i+1} + E_i$ is the $\lambda^{i+1}$ weight space of $E/E_i$. Thus we must have $\theta(v_{i+1}+E_i)\in  {\mathbb F} v_{i+1} +E_i$, i.e., $\theta(v_{i+1})+E_i=cv_{i+1}+E_i$, for some scalar $c\in {\mathbb F}$. But, by nilpotency of $\theta$ we must have $c=0$, i.e. $\theta(v_{i+1}+E_i)=E_i$ and $\theta(E_{i+1}/E_i)=0$. 
   Hence, $\bar{\psi}(E_{i+1}/E_i)\subseteq \ker\bar{\pi}$, i.e., $V_{i+1}/V_i\subseteq N/V_i$ and so $V_{i+1}\subseteq N$.
 \end{proof}
 
 \begin{Proposition}\label{prop:emb.N} The kernel of any $G$-module epimorphism from $T(2,2)$ to $T(2,1)$ contains a copy of $T(2,1)$. 
 \end{Proposition}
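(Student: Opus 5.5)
We build the copy of $T(2,1)$ inside $N$ in stages along the Weyl filtration $0=E_0\subseteq E_1\subseteq\cdots\subseteq E_5=E=T(2,1)$. The previous Proposition takes us from $E_0=0$ up to an embedding of $E_3$ into $N$. The remaining work is the two steps $E_3\to E_4\to E_5$, with $E_4/E_3=\Delta(2,0)$ and $E_5/E_4=\Delta(0,1)$. For these we use Lemma~\ref{lem:ext} in place of the nilpotency argument.

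\textbf{Stage 1 (up to $E_3$).} Let $N$ be the kernel of an arbitrary epimorphism $\pi:D=T(2,2)\to E$. By Corollary~\ref{cor:surj} such epimorphisms exist, and the argument does not depend on which one is chosen. Start with the zero embedding of $E_0$ and apply the previous Proposition three times, for $i=0,1,2$. This gives an embedding $\phi_3:E_3\to N$. For $i>0$ the extended maps stay injective because the simple socle $L(0,1)$ of $E$ lies in $E_1$. This stage is already complete.

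\textbf{Stage 2 (extending to $E_4$).} Apply $\Hom_G(-,N)$ to the short exact sequence $0\to E_3\to E_4\to \Delta(2,0)\to 0$. This gives the exact sequence
\[\Hom_G(E_4,N)\to\Hom_G(E_3,N)\to \Ext^1_G(\Delta(2,0),N).\]
By Lemma~\ref{lem:ext}(ii), $\Ext^1_G(\Delta(2,0),N)=0$, so $\phi_3$ extends to some $\phi_4:E_4\to N$. Any extension is injective: its kernel is a submodule of $E_4\subseteq E$, and since $\soc_G E=L(0,1)$ every nonzero submodule of $E$ meets $E_1\subseteq E_3$, where $\phi_3$ is injective. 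So the kernel is $0$.

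\textbf{Stage 3 (extending to $E_5=E$).} Apply $\Hom_G(-,N)$ to $0\to E_4\to E\to\Delta(0,1)\to 0$. Lemma~\ref{lem:ext}(i) gives $\Ext^1_G(\Delta(0,1),N)=0$, so $\phi_4$ extends to $\phi:E\to N$. The same socle argument shows $\phi$ is injective, so $\phi(E)\cong T(2,1)$ is a submodule of $N$.

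The main obstacle is the first three steps, where $\Ext^1_G(\Delta(\lambda^i),N)$ need not vanish. That difficulty is absorbed by the previous Proposition, whose proof rests on Lemma~\ref{lem:nosum} and the Fitting-lemma nilpotency argument. What is left here is to confirm that Lemma~\ref{lem:ext} applies to every kernel $N$, and not only to one fixed kernel. This holds because its proof uses only $N|_{G_1}\cong Q_1(0)\oplus Q_1(0,1)$. That decomposition holds for every kernel: $T(2,2)|_{G_1}$ is fixed, $T(2,1)|_{G_1}=Q_1(0,1)$ is injective so the sequence $0\to N\to T(2,2)\to T(2,1)\to 0$ splits over $G_1$, and Krull--Schmidt then determines $N|_{G_1}$.
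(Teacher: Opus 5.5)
Your proof is correct and follows the paper's argument: the paper also obtains $E_3\hookrightarrow N$ from the previous Proposition, then uses Lemma~\ref{lem:ext} to get $\Ext^1_G(E/E_3,N)=0$ and extends to all of $E$, with injectivity coming from $\soc_G E=L(0,1)\subseteq E_3$. The only difference is that you carry out that last extension in two steps, $E_3\to E_4\to E$, rather than one. Your check that $N|_{G_1}\cong Q_1(0)\oplus Q_1(0,1)$ holds for every kernel, which is needed for Lemma~\ref{lem:ext}, is a detail the paper leaves implicit.
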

 \begin{proof}  We write $E$ for $T(2,1)$.  Let $N$ be the kernel of an epimorphism from $D=T(2,2)$ to $E$.
By Proposition~\ref{prop:emb.N}, we have an embedding of $E_3$ into $N$. Moreover, by Lemma~\ref{lem:ext} we have that $\Ext^1_G(E/E_3,N)=0$. Therefore, the restriction map 
 $$\Hom_G(E,N)\rightarrow \Hom_G(E_3,N)$$
 is surjective. Since $E$ has simple $G$-socle $L(0,1)$ that is contained in $E_3$, we deduce at once that there is an embedding of $E$ into $N$.
 \end{proof}
 
 By Proposition~\ref{prop:surj}, Corollary~\ref{cor:surj} and Proposition~\ref{prop:emb.N} we obtain the main theorem of this paper.
  \begin{Theorem}\label{the:struct}
 We have a $G$-structure on $Q_1(0)$, namely $N/V$, where $N$ is the kernel of a surjective $G$-homomorphism $T(2,2)\rightarrow T(2,1)$ and $V$ is a $G$-submodule of $N$ isomorphic to $T(2,1)$.
 \end{Theorem}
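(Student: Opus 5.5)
We assemble the pieces already established. By Corollary~\ref{cor:surj} there is a surjective $G$-homomorphism $T(2,2)\to T(2,1)$; let $N$ denote its kernel. By the second Proposition~\ref{prop:emb.N}, $N$ contains a $G$-submodule $V$ isomorphic to $T(2,1)$. It therefore suffices to show that $(N/V)|_{G_1}\cong Q_1(0)$. We carry this out by restricting to $G_1$ and using that injective $G_1$-modules split off.

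First we determine $N|_{G_1}$. By Proposition~\ref{prop:tilt.dec}, $T(2,2)$ is a direct summand of $\St\otimes\St$, and the latter is $G_1$-injective by Lemma~\ref{lem:ch.stst}(i). Hence $T(2,2)|_{G_1}$ is injective. The summands $T(3,1)^2$ lie in the Steinberg block, so each restricts to copies of $\St=Q_1(1,1)$. Comparing with $\St\otimes\St|_{G_1}=Q_1(0)\oplus Q_1(0,1)^2\oplus\St^{16}$, and using Lemma~\ref{lem:ch.T(2,1)}(ii) together with dimension counts, or simply the fact that $T(3,1)^2$ accounts for all $\St^{16}$, we get
\[T(2,2)|_{G_1}\cong Q_1(0)\oplus Q_1(0,1)^2 .\]
The sequence $0\to N\to T(2,2)\to T(2,1)\to0$ restricts to $G_1$ with injective right-hand term $Q_1(0,1)$, so it splits over $G_1$. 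By Krull--Schmidt, $N|_{G_1}\cong Q_1(0)\oplus Q_1(0,1)$.

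Next we apply the same argument to $0\to V\to N\to N/V\to 0$. Here $V|_{G_1}\cong Q_1(0,1)$ is injective for $G_1$, so this sequence splits over $G_1$ and $N|_{G_1}\cong Q_1(0,1)\oplus (N/V)|_{G_1}$. Krull--Schmidt for finite-dimensional $G_1$-modules then gives $(N/V)|_{G_1}\cong Q_1(0)$. Thus $N/V$ is a $G$-module providing the required $G$-structure.

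The only substantive input beyond bookkeeping is the existence of the embedding $V\subseteq N$, which is exactly the preceding Proposition~\ref{prop:emb.N}. The remaining potential snag is the precise identification of $T(2,2)|_{G_1}$. This rests on the decomposition $\St\otimes\St=T(2,2)\oplus T(3,1)^2$ together with the observation that Steinberg-block modules restrict to sums of $\St$ on $G_1$, since $\St$ is both simple and projective for $G_1$ and is the only simple $G_1$-module in its block.
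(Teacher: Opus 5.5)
Your argument is correct and follows the paper's own route. The paper's proof is essentially the strategy paragraph at the start of Section~\ref{sec:struct}: take $N$ from Corollary~\ref{cor:surj}, note $N|_{G_1}=Q_1(0)\oplus Q_1(0,1)$, embed $V\cong T(2,1)$ into $N$ by the final embedding proposition, and split off $V|_{G_1}=Q_1(0,1)$. You have supplied the restriction, splitting and Krull--Schmidt bookkeeping that the paper leaves implicit. The one point to state explicitly is that splitting $0\to N\to T(2,2)\to T(2,1)\to 0$ over $G_1$ needs $Q_1(0,1)$ to be projective, which holds because injective and projective $G_1$-modules coincide.
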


\end{document}